\newtheorem{theorem}{Theorem}[section]
\newtheorem{lemma}[theorem]{Lemma}
\newtheorem{proposition}[theorem]{Proposition}
\newtheorem{corollary}[theorem]{Corollary}
\theoremstyle{definition}
\theoremstyle{remark}
\newtheorem{remark}[theorem]{Remark}
\numberwithin{equation}{section}
\begin{document}

\title{From subfactor planar algebras to subfactors}

\author{Vijay Kodiyalam}
\address{The Institute of Mathematical Sciences, Taramani, Chennai 600113, India}
\email{vijay@imsc.res.in}
\author{V. S. Sunder}
\address{The Institute of Mathematical Sciences, Taramani, Chennai 600113, India}
\email{sunder@imsc.res.in}
\keywords{Planar algebras, subfactors}

\begin{abstract}
We present a purely planar algebraic proof of the main result of
a paper of Guionnet-Jones-Shlaykhtenko which constructs an extremal
subfactor from a subfactor planar algebra whose standard invariant is
given by that planar algebra.
\end{abstract}

\maketitle

\section{Introduction}

This paper contains no new results. What it does is to give a different
proof of the main result of \cite{GnnJnsShl} which, in turn, offers an
alternative proof of an important result of Popa in \cite{Ppa} that may be paraphrased as saying that any subfactor planar algebra arises
from an extremal subfactor.

In this introductory section, we will briefly review the main result of \cite{GnnJnsShl} and the ingredients of its proof and compare and contrast
the proof presented here with that one.

The main result of \cite{GnnJnsShl} begins with the construction of
a tower $Gr_k(P)$ of graded $*$-algebras with compatible traces $Tr_k$ associated to a subfactor planar algebra $P$. 
Next, appealing to a result in \cite{PpaShl}, the planar algebra $P$
is considered as embedded as a planar subalgebra of the planar
algebra of a bipartite graph as described in \cite{Jns2}.
It is shown that that the
traces $Tr_k$ are faithful and positive and then $II_1$-factors $M_k$ are obtained
by appropriate completions (in case the modulus $\delta >1$ - which is the only real case of interest). 
It is finally seen that the tower of $M_k$'s is the basic construction
tower for (the finite index, extremal $II_1$-subfactor) $M_0 \subseteq M_1$ and that the planar algebra of this
subfactor is naturally isomorphic to the original planar algebra $P$.
The proofs all rely on techniques of free probability and random matrices and indeed, one of the stated goals of the paper is to
demonstrate the connections between these and planar algebras.

The {\it raison d'\^{e}tre} of this paper is to demonstrate the power
of planar algebra techniques. 
We begin with a short summary of our notations and conventions regarding planar algebras in Section 2.
In Section 3, we describe  a tower $F_k(P)$ of filtered $*$-algebras, with compatible traces and `conditional expectations', associated to a subfactor planar algebra $P$.
The positivity of the traces being obvious, we show in Section 4  that the
associated GNS representations are bounded and thus yield
a tower of finite von Neumann algebra completions.
The heart of this paper is Section 5 which is devoted to showing that these completions are factors and to computations of some relative commutants. The penultimate Section 6 identifies the tower as a basic construction tower of a finite index extremal subfactor with  associated planar
algebra as the original  $P$.
The final section exhibits  interesting trace preserving $*$-isomorphisms between
the algebras $Gr_k(P)$ of \cite{GnnJnsShl} and our $F_k(P)$, thus
justifying - to some extent - the first sentence of this paper.

All our proofs rely solely on planar algebra techniques and in that sense
our paper is mostly self-contained.  
We will need neither the embedding theorem for a subfactor planar
algebra into the planar algebra of a bipartite graph nor any
free probability or random matrix considerations.
In the trade-off between analytic techniques and algebraic/combinatorial techniques that is characteristic of subfactor theory, it would be fair to say that \cite{GnnJnsShl} leans towards the analytic approach while this paper takes
the opposite tack.

After this paper had been written, we communicated it to Jones
requesting his comments. He wrote back saying that this ``may be similar to \cite{JnsShlWlk}" which appears on his homepage; and we discovered that this is indeed the case.

\section{Subfactor planar algebras}

The purpose of this section is to fix our notations and conventions
regarding planar algebras. We assume that the reader is familiar
with planar algebras as in \cite{Jns} or in \cite{KdySnd} so we will
be very brief.

Recall that the basic structure that underlies planar algebras is an action by the
`coloured operad of planar tangles' which concept we will now explain.
Consider the set $Col = \{0_+, 0_-, 1, 2, \cdots \}$, whose elements  are called colours.

We will not define a tangle but merely note the following features.
Each tangle has an external box, denoted $D_0$, 
and a (possibly empty) ordered collection of 
internal non-nested boxes denoted
$D_1$, $D_2$, $\cdots$.
Each box has an even number (again possibly 
0) of points marked on its boundary. A box with $2n$ points on its boundary is called an $n$-box or said to be of colour $n$.
There is also given a collection of disjoint curves each of which is either 
closed,
or joins a marked point on one of the boxes to another such.
For each box having at least one marked point on its boundary, one
of the regions ( = connected components of the complement of the boxes and
curves) that impinge on its boundary is distinguished and marked
with a $*$ placed near its boundary.
The whole picture is to be planar and each marked point on a box must be
the end-point of one of the curves.
Finally, there is given a chequerboard shading of the regions such 
that 

the $*$-region of any box is shaded white.
A $0$-box is said to be  $0_+$ box if the region touching its boundary is 
white and a  $0_-$ box
otherwise.
A $0$ without the $\pm$ qualification will always refer to $0_+$.
A tangle is said to be an $n$-tangle if its external box is of colour $n$.
Tangles are defined only upto
a planar isotopy preserving the $*$'s, the shading and the ordering of the 
internal boxes.

We illustrate several important tangles in Figure \ref{fig:imptangles}.
\begin{figure}[!htb]
\psfrag{n}{\tiny $n$}
\psfrag{i}{\tiny $i$}
\psfrag{$ER_{n+i}^n$ : Right expectations}{$ER_{n+i}^n$ : Right expectations}
\psfrag{2n}{\tiny $2n$}
\psfrag{2}{\tiny $2$}
\psfrag{D_1}{\small $D_1$}
\psfrag{D_2}{\small $D_2$}
\psfrag{$I_n^{n+1}$ : Inclusion}{$I_n^{n+1}$ : Inclusion}
\psfrag{$EL(i)_{n+i}^{n+i}$ : Left expectations}{$EL(i)_{n+i}^{n+i}$ : Left expectations}
\psfrag{$M_{n,n}^n$ : Multiplication}{$M_{n,n}^n$ : Multiplication}
\psfrag{$TR_n^0$ : Trace}{$TR_n^0$ : Trace}
\psfrag{$R^{n+1}_{n+1}$ : Rotation}{$R_{n+1}^{n+1}$ : Rotation}
\psfrag{$1^n$ : Multiplicative identity}{$1^n$ : Mult. identity}
\psfrag{$ER_{n+1}^{n+1}$ : Right expectation}{$ER_{n+1}^{n+1}$ : Right expectation}
\psfrag{$I_n^n$ : Identity}{$I_n^n$ : Identity}
\psfrag{$E^{n+2}$ : Jones projections}{$E^{n+2}$ : Jones proj.}
\includegraphics[height=8.5cm]{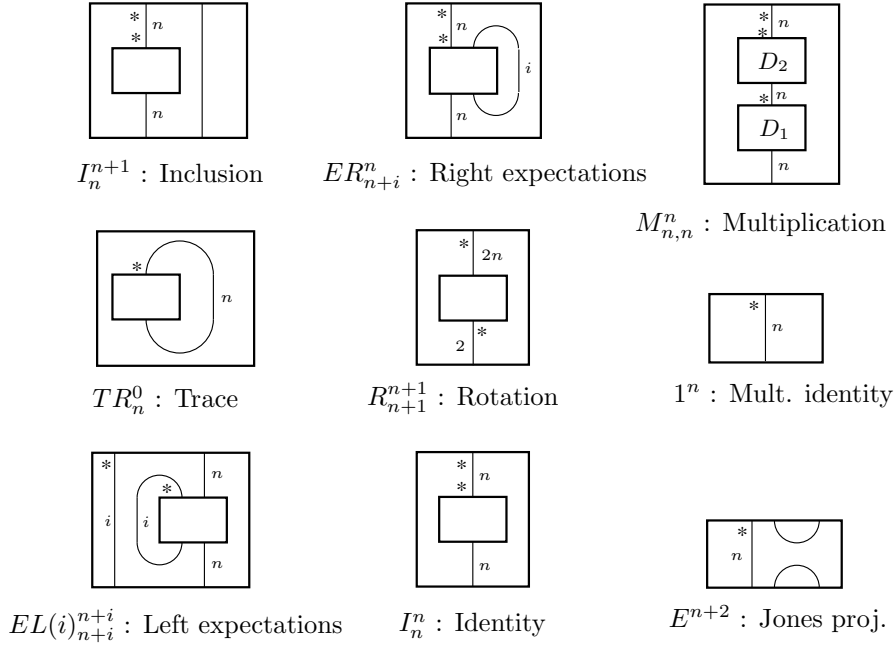}
\caption{Some important tangles (defined for $n \geq 0,0 \leq i \leq n$).}
\label{fig:imptangles}
\end{figure}
This figure (along with others in this paper)  uses the following notational device for convenience in
drawing tangles.
A strand in a tangle with a {\em non-negative integer}, say $t$, adjacent to it will indicate
a $t$-cable of that strand, i.e., a parallel cable of $t$ strands, in
place of the one actually drawn. Thus for instance, the tangle
equations of Figure \ref{cabledef} hold.
\begin{figure}[!htb]
\psfrag{3}{\tiny $3$}
\psfrag{6}{\tiny $6$}
\includegraphics[height=2cm]{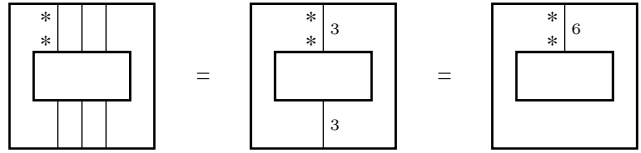}
\caption{Illustration of cabling notation for tangles}
\label{cabledef}
\end{figure}
In the sequel, we will have various integers adjacent to strands
in tangles and leave it to the reader to verify in each case that the
labelling integers are indeed non-negative.

A useful labelling convention for tangles (that we will not be very consistent in using though) is to decorate its tangle symbol,
such as $I,EL,M$ or $TR$, with subscripts and a superscript that give the colours
of its internal boxes and external box respectively. With this, we may dispense
with showing the shading, which is then unambiguously determined.
Another useful device is to label the marked points on any $n$-box 
with the numbers $1,2,\cdots,2n-1,2n$ in a clockwise fashion 
so that the interval from $2n$ to $1$ is in its $*$-region.

The basic operation that one can perform on tangles is substitution of one
into a box of another. If $T$ is a tangle that has some internal  boxes
$D_{i_1}, \cdots, D_{i_j}$ of colours $n_{i_1}, \cdots, n_{i_j}$ and if
$S_1, \cdots, S_j$ are arbitrary tangles of colours $n_{i_1}, \cdots, 
n_{i_j}$, then we may substitute $S_t$ into the box $D_{i_t}$ of $T$ for 
each $t$ - such that the `$*$'s match' - to get a new tangle that will be 
denoted $T \circ_{(D_{i_1}, \cdots,D_{i_j})}
(S_1, \cdots, S_j)$.
The collection of tangles along with the substitution operation is called
the coloured operad of planar tangles.

A planar algebra $P$ is an algebra over the coloured operad of planar tangles.
By this, is meant the following: $P$ is a collection $\{P_n\}_{n \in Col}$ of vector spaces
and linear maps $Z_T : P_{n_1} \otimes P_{n_2} \otimes \cdots \otimes 
P_{n_b} 
\rightarrow P_{n_0}$ for each $n_0$-tangle $T$ with internal boxes of colours
$n_1,n_2, \cdots,n_b$. The collection of maps is to be `compatible with 
substitution of tangles and renumbering of internal boxes' in an obvious 
manner.
For a planar algebra $P$, each $P_n$ acquires the structure of an
associative, unital algebra with multiplication defined using the
tangle $M_{n,n}^n$ and unit defined to be $1_n = Z_{1^n}(1)$.

Among planar algebras, the ones that we will be interested in are the
subfactor planar algebras.
These are complex, finite-dimensional and connected in the sense that each $P_n$ is a 
finite-dimensional complex vector space and $P_{0_\pm}$ are one dimensional.
They have a positive modulus $\delta$, meaning that closed loops in a 
tangle $T$ contribute
a multiplicative factor of $\delta$ in $Z_T$.
They are spherical in that for a $0$-tangle $T$, the function $Z_T$ is not 
just
planar isotopy invariant but also an isotopy invariant of the tangle regarded
as embedded on the surface of the two sphere.
Further, each $P_n$ is a $C^*$-algebra in such a way that for an $n_0$-tangle 
$T$ with internal boxes of colours
$n_1,n_2, \cdots,n_b$ and for $x_i \in P_{n_i}$, the equality
$Z_T(x_1 \otimes \cdots \otimes x_b)^* ~=~
Z_{T^*}(x_1^* \otimes \cdots \otimes x_b^*)$ holds,
where $T^*$ is the adjoint of the tangle $T$ - which, by definition, is 
obtained from $T$ by reflecting it.

Finally, the trace $\tau : P_n \rightarrow {\mathbb C} = P_{0}$ 
defined by:
\begin{eqnarray*}
\tau(x) ~=~  \delta^{-n} Z_{TR_n^{0}}(x)
\end{eqnarray*}
is postulated to be a faithful, positive (normalised) trace for each $n \geq 0$.

We then have the following fundamental thorem of Jones in \cite{Jns}.

\begin{theorem}\label{jones}
Let  
\[ (M_0 =) N \subset M (= M_1) \subset^{e_2} M_2 \subset \cdots \subset^{e_n} M_n 
\subset^{e_{n+1}} \cdots \]
be the tower of the basic construction associated to an extremal subfactor 
with index $[M:N] = \delta^2 < \infty$. Then there exists a unique subfactor planar 
algebra
$P = P^{N \subset M}$ of modulus ~$\delta$ satisfying the following conditions:

(0) $P^{N \subset M}_n = N^\prime \cap M_{n} ~\forall n \geq 0$ - where this
is regarded as an equality of *-algebras which is consistent with the 
inclusions on the two sides;

(1) $Z_{E^{n+1}}(1) = \delta ~e_{n+1} ~\forall ~n \geq 1$;

(2) $Z_{EL(1)^{n+1}_{n+1}}(x) = \delta ~E_{M^\prime \cap M_{n+1}}(x) ~\forall
~x \in N^\prime \cap M_{n+1}, ~\forall n \geq 0$;

(3) $Z_{ER_{n+1}^n}(x) = \delta ~E_{N^\prime \cap M_{n}}(x) ~\forall
~x \in N^\prime \cap M_{n+1}$; and this (suitably interpreted for $n = 0_\pm$)  is required to hold for all
$n \in Col$.

Conversely, any subfactor planar algebra $P$ with modulus ~$\delta$ arises 
from an extremal subfactor of index $\delta^2$ in this fashion. \qed
\end{theorem}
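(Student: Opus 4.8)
The statement splits into two independent assertions, and I would treat them separately. The forward direction --- producing $P^{N\subset M}$ from an extremal subfactor --- is the original construction of Jones, while the converse --- that every subfactor planar algebra is realised --- is exactly the result that Sections 3--6 of this paper are built to establish (along the lines of \cite{GnnJnsShl}); so for the converse I would only outline the strategy here.

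\emph{Forward direction.} Begin with the Jones tower $N = M_0 \subset M_1 = M \subset M_2 \subset \cdots$, with conditional expectations $E_n : M_n \to M_{n-1}$ and Jones projections $e_n$, and put $P_n = N' \cap M_n$ for each colour $n$ (the colours $0_\pm$ being read off appropriately). Finiteness of the index makes each $P_n$ a finite-dimensional $C^*$-algebra, and factoriality of $N$ and $M$ gives $P_{0_\pm} = \mathbb C$; this is condition (0). The real work is to construct the operad action $Z_T$. The plan is to use the fact that the coloured planar operad is generated, under the substitution operation, by a short explicit list of tangles --- the cup/cap (Temperley--Lieb) tangles, the multiplication $M_{n,n}^n$, the inclusion $I_n^{n+1}$, the Jones tangles $E^{n+2}$, the one-step expectation tangles $ER_{n+1}^n$ and $EL(1)_{n+1}^{n+1}$, and the rotation $R_{n+1}^{n+1}$ --- subject to an explicit set of relations; one then \emph{defines} $Z$ on these generators by the recipes built into (1)--(3) (multiplication and inclusion by the corresponding algebra operations, $E^{n+2} \mapsto \delta\, e_{n+1}$, the expectation tangles by $\delta$ times the displayed conditional expectations, the rotation by the canonical shift automorphism, and the Temperley--Lieb part by the Markov trace recipe) and checks that every defining relation is respected. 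The individual checks reduce to standard subfactor identities: the basic commutation relation $e_n x e_n = E_{n-1}(x)\, e_n$ for $x \in M_n$, the Pimsner--Popa basis expansion, and the Markov property of the trace. Extremality is used precisely once, but essentially: it is equivalent to the equality of the two natural traces on each $N' \cap M_n$, which is what guarantees that $Z_T$ for a $0$-tangle $T$ is a \emph{spherical} invariant rather than merely a planar-isotopy invariant. Positivity and faithfulness of $\tau$ are inherited from the faithful Markov trace on $M_n$. Uniqueness is then automatic: conditions (0)--(3) fix $Z_T$ on the generating tangles, hence on all tangles.

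\emph{Converse.} Given an abstract subfactor planar algebra $P$ of modulus $\delta > 1$ (the case $\delta = 1$ being trivial), the plan is the one executed in the body of the paper: form the tower $F_k(P)$ of filtered $*$-algebras with its compatible traces $Tr_k$ and conditional expectations; prove that the $Tr_k$ are positive and faithful; pass to the GNS completions $M_k$, having first checked that these GNS representations are bounded so that the $M_k$ are finite von Neumann algebras; show the $M_k$ are $II_1$ factors and compute enough relative commutants to identify the tower $(M_k)$ as the basic construction tower of the finite-index extremal subfactor $M_0 \subset M_1$; and finally verify that the planar algebra $P^{M_0 \subset M_1}$ produced by the forward direction is naturally isomorphic to $P$, the isomorphism being read off the identification $M_0' \cap M_k \cong P_k$.

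The main obstacles are as follows. In the forward direction the substance is the consistency verification for the operad action: showing that the prescribed values on the generating tangles extend to a well-defined family $\{Z_T\}$ compatible with isotopy and substitution, and isolating cleanly the single point (sphericity) at which extremality enters. In the converse, the crux --- and the part for which \cite{GnnJnsShl} invokes random matrices and free probability while this paper substitutes a purely planar argument in Section 5 --- is the positivity of the traces $Tr_k$, together with the factoriality of the completions $M_k$.
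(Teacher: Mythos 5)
Your proposal is correct and follows essentially the same route as the paper: the paper states this theorem as a quoted result of Jones \cite{Jns} (forward direction and uniqueness), and devotes Sections 3--6 to the converse, which is exactly the strategy you outline (the filtered $*$-algebras $F_k(P)$ with traces and expectations, positivity and boundedness of the GNS representations, factoriality and computation of relative commutants, identification of the tower as a basic construction tower of an extremal finite-index subfactor, and finally an appeal to the forward direction to identify the planar algebra). Your generators-and-relations sketch of the forward construction, with extremality entering only through sphericality, matches the standard construction in \cite{Jns} and \cite{KdySnd} which the paper cites rather than reproves.
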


Recall that a finite index $II_1$-subfactor $N \subseteq M$ is said to be
extremal if the restriction of the traces on $N^\prime$ (computed in ${\mathcal L}(L^2(M))$) and $M$
to $N^\prime \cap M$ agree. 
The notations $E_{M^\prime \cap M_{n+1}}$ and $E_{N^\prime \cap M_{n}}$ stand for the trace preserving
conditional expectations of $N^\prime \cap M_{n+1}$ onto $M^\prime \cap M_{n+1}$ and $N^\prime \cap M_{n}$ respectively.

The converse part of Jones' theorem is, in essence, the result of Popa  alluded to in the introduction,
as remarked in \cite{Jns}.
It is this converse, as proved in \cite{GnnJnsShl}, for which we supply 
a different proof in the rest of this paper.

We note that the multiplication tangle here agrees
with the one in \cite{GnnJnsShl} but is adjoint to the ones in \cite{Jns} and in \cite{KdySnd}
while the rotation tangle is as in \cite{KdySnd} but adjoint to the one
in \cite{Jns}.

\begin{remark}\label{plalg}
For a subfactor planar algebra $P$,
(i) the right expectation tangles $ER_{n+i}^n$
 define  surjective positive maps $P_{n+i} \rightarrow P_n$  of norm $\delta^i$,
(ii) the left expectation tangles $EL(i)_{n+1}^{n+1}$ define  positive maps $P_{n+1} \rightarrow P_{n+1}$ whose images, denoted $P_{i,n+1}$,
are $C^*$-subalgebras of $P_{n+1}$ and
(iii) the rotation tangles $R^{n+1}_{n+1}$ define unitary maps $P_{n+1} \rightarrow P_{n+1}$.
The first two statements follow from the fact that
$ER^n_{n+i}$ and $EL(i)_{n+1}^{n+1}$  give (appropriately scaled) conditional
expectation maps that preserve a faithful, positive trace while the third is a
consequence of the compatibility of the tangle $*$ and the $*$ of the $C^*$-algebra
$P_{n+1}$.
\end{remark}

\section{The tower of filtered $*$-algebras with trace}

For the rest of this paper, the following notation will hold. Let
$P$ be a subfactor planar algebra of modulus $\delta > 1$.
We therefore have finite-dimensional $C^*$-algebras $P_n$ for $n \in Col$
with appropriate inclusions. 
For
$x \in P_n$, set $||x||_{P_n} = \tau(x^*x)^\frac{1}{2}$; this defines a norm on
$P_n$.

For $k \geq 0$, let $F_k(P)$ be the 
vector space direct sum $\oplus_{n=k}^{\infty} P_n$ (where $0
= 0_+$, here and in the sequel).
Our goal, in this section, is to equip each $F_k(P)$ with a
filtered, associative, unital $*$-algebra structure with  normalised trace $t_k$
and to describe trace preserving filtered $*$-algebra inclusions
$F_0(P) \subseteq F_1(P) \subseteq F_2(P) \subseteq \cdots$,
as well as conditional expectation-like maps 
$F_0(P) \stackrel{E_0}{\leftarrow} F_1(P) \stackrel{E_1}{\leftarrow}
F_2(P) \stackrel{E_2}{\leftarrow} \cdots$.

We begin by defining the multiplication.
For $a \in F_k(P)$, we will denote by $a_n$, its $P_n$ component
for $n \geq k$. Thus $a = \sum_{n=k}^\infty a_n = (a_k,a_{k+1},\cdots) \in F_k(P)$, where
only finitely many $a_n$ are non-zero.
Now suppose that $a =  a_m \in P_m$ and $b = b_n \in P_n$ where $m,n \geq k$. Their
product in $F_k(P)$, denoted\footnote{Rather than being notationally correct and write $\#_k$, we drop the subscript in the interests of aesthetics.} $a\#b$, is defined to be $\sum_{t=|n-m|+k}^{n+m-k} (a\#b)_t$ where $(a\#b)_t$ is given by
the tangle in Figure \ref{sharpdef}.
\begin{figure}[!htb]
\psfrag{a}{$a$}
\psfrag{b}{$b$}
\psfrag{m+t-n-k}{\tiny $m\!+\!t-$}
\psfrag{m+n-k-t}{\tiny $m\!+\!n\!-\!k\!-\!t$}
\psfrag{n+t-m-k}{\tiny $n\!+\!t-$}
\psfrag{-m-k}{\tiny $m\!-\!k$}
\psfrag{-n-k}{\tiny $n\!-\!k$}
\psfrag{k}{\tiny $k$}
\includegraphics[height=2.5cm]{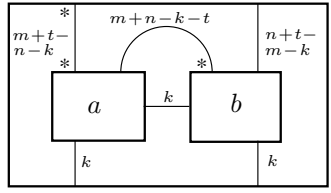}
\caption{Definition of the $P_t$ component of $a\#b$.}
\label{sharpdef}
\end{figure}
Define $\#$ by extending this map bilinearly to the whole of $F_k(P) \times F_k(P)$.

As in \cite{GnnJnsShl}, we will reserve `$*$' to denote the usual involution
on $P_n$'s and use $\dagger$ (rather than $\dagger_k$)  to denote the involution on $F_k(P)$.
For $a = a_m  \in P_m \subseteq F_k(P)$ define $a^\dagger \in P_m$ by the
tangle in Figure \ref{daggerdef}
\begin{figure}[!htb]
\psfrag{2(m-k)}{\tiny $2(m\!-\!k)$}
\psfrag{2k}{\tiny $2k$}
\psfrag{a^*}{$a^*$}
\includegraphics[height=2cm]{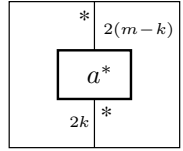}
\caption{Definition of the $*$-structure on $F_k(P)$.}
\label{daggerdef}
\end{figure}
and extend additively to the whole of $F_k(P)$.
Note that $a^\dagger = Z_{(R^m_m)^k}(a^*)$ where $R_m^m$ is the 
$m$-rotation tangle and $(R^m_m)^k = R_m^m \circ R_m^m \circ \cdots \circ R_m^m$ ($k$ factors).

Next, define the linear functional $t_k$ on $F_k(P)$ to be the normalised  trace of
its $P_k$ component, i.e., for $a=(a_k,a_{k+1},\cdots) \in F_k(P)$
define $t_k(a) = \tau(a_k)$.

Then, define the `inclusion map' of $F_{k-1}(P)$
into $F_{k}(P)$ (for $k \geq 1$)
as the map whose restriction takes $P_{n-1} \subseteq F_{k-1}(P)$ to $P_{n} \subseteq
F_{k}(P)$ by the 
tangle illustrated in Figure~\ref{incldef}.
\begin{figure}[!htb]
\psfrag{k}{\tiny $k\!-\!1$}
\psfrag{2n-k}{\tiny $2n-$}
\psfrag{-k-1}{\tiny $k\!-\!1$}
\includegraphics[height=2cm]{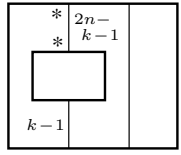}
\caption{
The inclusion from $P_{n-1} \subseteq F_{k-1}(P)$
to $P_{n} \subseteq F_{k}(P)$.}
\label{incldef}
\end{figure}

Finally, define the conditional expectation-like map $E_{k-1} : F_k(P)
\rightarrow F_{k-1}(P)$ (for $k \geq 1$) as $\delta^{-1}$ times the map whose restriction takes
$P_n  \subseteq F_k(P)$ to $P_{n-1} \subseteq
F_{k-1}(P)$ by the tangle illustrated in Figure \ref{condexpdef}.
\begin{figure}[!htb]
\psfrag{k-1}{\tiny $k\!-\!1$}
\psfrag{2n-k-1}{\tiny $2n\!-\!k$}
\psfrag{-1}{\tiny $-\!1$}
\includegraphics[height=2cm]{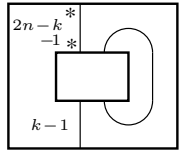}
\caption{Definition of $\delta E_{k-1}$ from $P_n \subseteq F_k(P)$
to $P_{n-1} \subseteq F_{k-1}(P)$.}
\label{condexpdef}
\end{figure}

An important observation that we use later is that `restricted' to $P_k \subseteq F_k(P)$,
all these maps are the `usual' ones of the planar algebra $P$.
More precisely we have (a) $\#|_{P_k \otimes P_k} = Z_{M^k_{k,k}}$,
(b) $\dagger|_{P_k} = *$, (c) $t_k|_{P_k} = \tau$, (d) The inclusion
of $F_{k-1}(P)$ into $F_k(P)$ restricts to $Z_{I_{k-1}^k}: P_{k-1}
\rightarrow P_k$, and (e) $\delta E_{k-1}|_{P_k} = Z_{ER^{k-1}_k}$.

We can now state the main result of this section.
\begin{proposition}\label{filtalg}
Let $k \geq 0$. Then, the following statements hold.
\begin{enumerate}
\item The vector space $F_k(P)$ acquires a natural
 associative, unital,  filtered algebra structure for the $\#$ multiplication.
\item The operation $\dagger$ defines a conjugate linear, involutive, anti-isomorphism of $F_k(P)$ to itself thus making it a $*$-algebra.
\item The map $t_k$ on $F_k(P)$ satisfies the
equation $t_k(b^\dagger\#a) = \delta^{-k} \sum_{n=k}^\infty \delta^n\tau(b_n^*a_n)$ and consequently defines a normalised trace on $F_k(P)$ that makes
$\langle a | b \rangle = t_k(b^\dagger\#a)$
an inner-product on $F_k(P)$.
\item The inclusion map of $F_k(P)$ into $F_{k+1}(P)$ is a normalised trace-preserving unital $*$-algebra monomorphism.
\item The map $E_{k} : F_{k+1}(P) \rightarrow F_k(P)$ is a
$*$- and trace-preserving $F_k(P)-F_k(P)$ bimodule
retraction for the inclusion map of $F_k(P)$ into $F_{k+1}(P)$.
\end{enumerate}
\end{proposition}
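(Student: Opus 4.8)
The plan is to verify each of the five assertions by translating the claimed algebraic identities into identities between planar tangles, using the substitution axiom for the action $Z$ of the operad together with the definitions given in Figures~\ref{sharpdef}--\ref{condexpdef}, and the properties of subfactor planar algebras recorded in Remark~\ref{plalg}. The overarching strategy is: \emph{first prove (3)}, since the inner-product formula is the most mechanical to extract from the definition of $\#$ and $\dagger$ and, once in hand, it gives a clean analytic handle (faithfulness of the form) that streamlines the remaining parts. For (3), I would take $a = a_m \in P_m$, $b = b_n \in P_n$, and compute $t_k(b^\dagger \# a)$ directly: by definition $t_k$ picks out the $P_k$-component of $b^\dagger\#a$, so only the term $t = k$ in $\sum_t (b^\dagger\#a)_t$ survives, forcing $m = n$; the resulting tangle, after feeding $b^* = b^\dagger$ and $a$ into the appropriate boxes, is precisely $TR_n^0$ applied to $b_n^* a_n$ up to the closed loops contributed by the cabling strands, which produce the power of $\delta$. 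Counting those loops carefully yields the factor $\delta^{n-k}$, hence $t_k(b^\dagger\#a) = \delta^{-k}\sum_n \delta^n \tau(b_n^* a_n)$. Positivity and faithfulness of $\tau$ on each $P_n$ (a hypothesis on subfactor planar algebras) then immediately give that $\langle\,\cdot\,|\,\cdot\,\rangle$ is an inner product, and the trace property $t_k(a\#b)=t_k(b\#a)$ follows either from this symmetric formula together with (1) and (2), or directly by a rotation-invariance (sphericality) argument on the trace tangle.

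For (1), associativity is the one genuinely substantive point. I would argue that both $(a\#b)\#c$ and $a\#(b\#c)$, expanded over all intermediate colours, are computed by substituting $a,b,c$ into a single three-input tangle — the one obtained by stacking two copies of the Figure~\ref{sharpdef} tangle — and that the two ways of stacking are planar-isotopic rel boxes and $*$'s, so $Z$ assigns them the same map; the bookkeeping is in matching up the ranges of summation indices and checking the cabling integers are non-negative, which the paper has already flagged as routine. Unitality: one checks that $1_{F_k} := 1_k = Z_{1^k}(1) \in P_k \subseteq F_k(P)$ is a two-sided $\#$-identity, again by a tangle identity (feeding $1^k$ into one box of Figure~\ref{sharpdef} collapses it, using observation (a) and the identity-tangle behaviour). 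The filtration is $F_k(P) = \bigcup_m \big(\bigoplus_{n=k}^m P_n\big)$, and $P_m \# P_{m'} \subseteq \bigoplus_{t \le m+m'-k} P_t$ is read off the index range in the definition, so the filtration is respected. For (2), that $\dagger$ is conjugate-linear and involutive is clear from $a^\dagger = Z_{(R_m^m)^k}(a^*)$ together with Remark~\ref{plalg}(iii) (the rotation is unitary, so $(R_m^m)^k$ applied twice — once to $a$, once after reflection — returns $a$; more precisely one uses that reflecting the rotation tangle inverts it); the anti-multiplicativity $(a\#b)^\dagger = b^\dagger \# a^\dagger$ is the tangle statement that reflecting the Figure~\ref{sharpdef} tangle and conjugating the rotations is the same as swapping the two input boxes, which is a planar isotopy after applying the $C^*$-compatibility axiom $Z_T(\cdots)^* = Z_{T^*}(\cdots^*)$.

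Parts (4) and (5) are then comparatively short given the tangle calculus and the formula from (3). For (4): that the inclusion $P_{n-1}\hookrightarrow P_n$ of Figure~\ref{incldef} is multiplicative and unital is a stacking-of-tangles identity (the inclusion tangle commutes past the $\#$-tangle), $*$-preservation follows since the inclusion tangle is self-adjoint up to the rotations built into $\dagger$, and trace-preservation is immediate from the formula in (3) once one notes the inclusion sends the $P_{n-1}$-component with weight $\delta^{n-1}$ to the $P_n$-component with weight $\delta^n$ but simultaneously rescales by $\delta^{-1}$ in the $\delta^{-k}$ versus $\delta^{-(k+1)}$ prefactor — the two $\delta$'s cancel; injectivity is clear since the inclusion tangle is injective on each $P_{n-1}$ (it has a one-sided inverse given by a capping tangle). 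For (5): the retraction property $E_k \circ (\text{incl}) = \mathrm{id}_{F_k(P)}$ is the tangle identity that capping off (Figure~\ref{condexpdef}) undoes the extra cable introduced by the inclusion (Figure~\ref{incldef}), giving a factor $\delta$ that is cancelled by the $\delta^{-1}$ in the definition of $E_{k}$; the bimodule property $E_k(x\#y\#z) = x \# E_k(y) \# z$ for $x,z \in F_k(P)\subseteq F_{k+1}(P)$, $y \in F_{k+1}(P)$ is again a planar isotopy (the capping curve can be slid past the boxes carrying $x$ and $z$ since those sit in the ``$P_k$-part'' of the picture, away from the capped strand); $*$-preservation and trace-preservation of $E_k$ follow from the corresponding facts about $ER^{k-1}_k$ in Remark~\ref{plalg}(i) together with the weight formula. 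The main obstacle throughout is none of these structural identities individually but rather the uniform index/cabling bookkeeping — keeping straight, across all five parts, exactly which cables have which widths and how many closed loops each composition produces, so that every power of $\delta$ lands where the statement predicts; this is precisely the kind of verification the paper explicitly delegates to the reader, so I would present the tangle pictures and let the loop-counting be checked case by case.
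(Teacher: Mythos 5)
Your proposal follows essentially the same route as the paper: the trace formula and inner product are read off directly from the definitions of $\#$, $\dagger$ and $t_k$, associativity is proved by matching the terms of the two iterated-product expansions (stacked copies of the tangle of Figure \ref{sharpdef}, where your ``bookkeeping'' is exactly the paper's explicit bijection $T(m,n,p)$ between the index sets $I(m,n,p)$ and $J(m,n,p)$), and parts (2), (4), (5) are the routine pictorial verifications the paper itself omits. One small caveat: traciality of $t_k$ does not follow from the formula together with (1) and (2) alone — one genuinely needs the compatibility of $\tau$ with the rotation (unitarity of $Z_{R^n_n}$, i.e.\ your second, sphericality/rotation alternative), which is precisely the ingredient the paper cites.
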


\begin{proof}
(1) 
Take
$a \in P_m, b \in P_n, c \in P_p$ with $m,n,p \geq k$.
Then, by definition of $\#$,
\begin{eqnarray*}
(a\#b)\#c &=& \sum_{t=|m-n|+k}^{m+n-k} (a\#b)_t\#c
= \sum_{t=|m-n|+k}^{m+n-k} \sum_{s=|t-p|+k}^{t+p-k} ((a\#b)_t\#c)_s, {\text {~~while}}\\
a\#(b\#c) &=& \sum_{v=|n-p|+k}^{n+p-k} a\#(b\#c)_v =
\sum_{v=|n-p|+k}^{n+p-k} \sum_{u=|m-v|+k}^{m+v-k} (a\#(b\#c)_v)_u.
\end{eqnarray*}

Let $I(m,n,p) = \{(t,s) : |m-n|+k \leq t \leq m+n-k, |t-p|+k \leq s \leq t+p-k\}$ and $J(m,n,p) = \{(v,u) : |n-p|+k \leq v \leq n+p-k, |m-v|+k \leq u \leq m+v-k\}$, so that the former indexes terms of $(a\#b)\#c$ while the
latter indexes terms of $a\#(b\#c)$. 

A routine verification shows that (a) $I(m,n,p) = J(p,n,m)$ and (b)
the map $T(m,n,p): I(m,n,p) \rightarrow J(m,n,p)$ defined by
$T(m,n,p)((t,s)) = (max\{m+p,n+s\}-t,s)$ is a well-defined bijection
with inverse $T(p,n,m)$ such that (c) if $T(m,n,p)((t,s)) = (v,u)$, then both $((a\#b)_t\#c)_s$ and $(a\#(b\#c)_v)_u$ are equal to the figure on the right or on the
left in Figure \ref{assoc}
\psfrag{a}{$a$}
\psfrag{b}{$b$}
\psfrag{c}{$c$}
\psfrag{k}{\tiny $k$}
\psfrag{k-1}{\tiny $k\!-\!1$}
\psfrag{m+n-t-k}{\tiny $m\!+\!n\!-\!t\!-\!k$}
\psfrag{n+m-t-k}{\tiny $m\!+\!n\!-\!t\!-\!k$}
\psfrag{n-m+t-k}{\tiny $n\!-\!m\!+\!t\!-\!k$}
\psfrag{m+p-n-s}{\tiny $m\!+\!p\!-\!n\!-\!s$}
\psfrag{n+s-m-p}{\tiny $n\!+\!s\!-\!m\!-\!p$}
\psfrag{t+p-s-k}{\tiny $t\!+\!p\!-\!s\!-\!k$}
\psfrag{m+n-t-k-1}{\tiny $m\!+\!n\!-\!t\!-\!k\!-\!1$}
\psfrag{n-m+t-k-1}{\tiny $n\!-\!m\!+\!t\!-\!k\!-\!1$}
\psfrag{p-t+s}{\tiny $p\!-\!t\!+\!s$}
\psfrag{p-t+}{\tiny $p\!-\!t+$}
\psfrag{p+u-}{\tiny $p\!+\!u-$}
\psfrag{t+u-}{\tiny $t\!+\!s-$}
\psfrag{m-n+}{\tiny $m\!-\!n+$}
\psfrag{t-k}{\tiny $t\!-\!k$}
\psfrag{s-k}{\tiny $s\!-\!k$}
\psfrag{p-k}{\tiny $p\!-\!k$}
\begin{figure}[!htb]
\includegraphics[height=2.7cm]{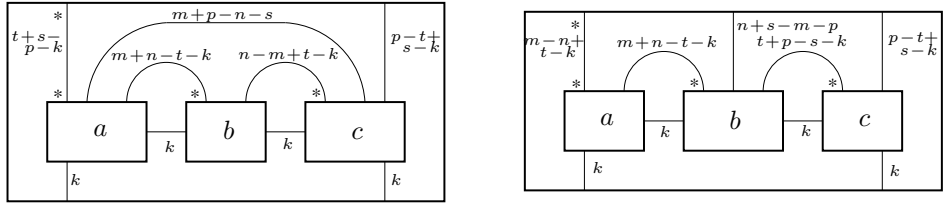}
\caption{$((a\#b)_t\#c)_s  = (a\#(b\#c)_v)_u$.}
\label{assoc}
\end{figure}
according as $m+p \geq n+s$ or $m+p \leq n+s$; this finishes the proof of associativity.

Observe that the usual unit $1_k$ of $P_k$ is also the unit for
the $\#$-multiplication in $F_k(P)$ and that there is an obvious
filtration of $F_k(P)$ by the subspaces which are the direct sums of
the first $n$ components of $F_k(P)$ for $n \geq 0$.\\
(2) This is an entirely standard pictorial planar algebra argument
which we'll omit.\\
(3) By definition, $t_k(b^\dagger\#a)$ is the normalised trace of
$(b^\dagger\#a)_k$. By definition of $\#$, the $k$-component
of $b^\dagger\#a$ has contributions only from $b_n^\dagger\#a_n$
for $n \geq k$, and the normalised trace of this contribution is given by
$\delta^{-k+n}\tau(b_n^*a_n)$, as needed. 
The fact that $t_k$ is a normalised trace on $F_k(P)$ follows from this
and the unitarity of $Z_{R_n^n}$ on $P_n$.
It also follows that $\langle a | b \rangle = t_k(b^\dagger\#a)$ is an inner-product on $F_k(P)$.\\
(4), (5) We also omit these proofs which  are routine applications of
pictorial techniques using the definitions.
\end{proof}

We define $H_k$ to be the Hilbert space completion of $F_k(P)$.
Note that for the inner-product on $F_k(P)$, the subspaces $P_n$ 
of $H_k$
are mutually orthogonal and so $H_k$ is their orthogonal direct
sum $\oplus_{n=k}^\infty P_n$. In particular, elements of $H_k$
are sequences $\xi = (x_k,x_{k+1},\cdots)$ where $||\xi||_{H_k}^2 = \delta^{-k} \sum_{n=k}^\infty \delta^{n} \tau(x_n^*x_n) < \infty$.

\section{Boundedness of the GNS representations}

Our goal in this section is to show that for each $k \geq 0$, the 
left and right regular representations of $F_k(P)$ are both
bounded for the norm on $F_k(P)$ and therefore extend uniquely to
$*$-homomorphisms $\lambda_k, \rho_k : F_k(P) \rightarrow {\mathcal L}(H_k)$. We then show that the von Neumann algebras
generated by $\lambda_k(F_k(P))$ and $\rho_k(F_k(P))$ are finite
and commutants of each other.
Finally we show that for $k \geq 0$, the finite von Neumann algebras
$\lambda_k(F_k(P))''$ naturally form a tower.

The key estimate we need for proving boundedness is contained
in Proposition \ref{prop:estimate}, the proof of which appeals to the
following lemma.

\begin{lemma}\label{lem:estimlem}
Fix $p \geq k$.
Suppose that $0 \leq q \leq 2p$, $0 \leq i \leq 2p-q$ and $a \in P_p \subseteq H_k$.
There is a unique positive $c \in P_{2p-q}$ such that the equation
\begin{figure}[!htb]
\psfrag{a}{$a$}
\psfrag{c}{$c$}
\psfrag{a^*}{$a^*$}
\psfrag{c^*}{$c^*$}
\psfrag{=}{$=$}
\psfrag{i}{\tiny $i$}
\psfrag{q}{\tiny $q$}
\psfrag{2p-q-i}{\tiny $2p-q-i$}
\psfrag{2p-q}{\tiny $2p-q$}
\psfrag{m+t-n-k}{\tiny $m\!+\!t\!-\!n\!-\!k$}
\psfrag{n+t-m-k}{\tiny $n\!+\!t\!-\!m\!-\!k$}
\psfrag{m+n+k-t}{\tiny $u$}
\psfrag{m+n+k-t = u}{\tiny $u=m\!+\!n\!+\!k\!-\!t$}
\includegraphics[height=3cm]{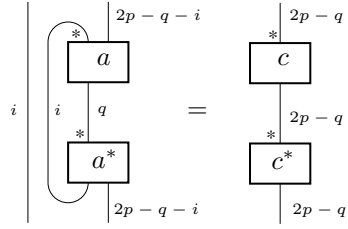}
\caption{On positivity}
\label{estimlem}
\end{figure}
of Figure \ref{estimlem} holds. Further, $||c||_{H_k}^2 =\delta^i  ||a||_{H_k}^2$.
\end{lemma}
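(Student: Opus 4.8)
The plan is to read the two sides of Figure~\ref{estimlem} as elements of $P_{2p-q}$ (or of a related $P_N$) built out of the box labelled $a$ and a box labelled $c$, and to argue that the asserted equation says precisely that a certain linear map applied to a Gram-type element coming from $a$ equals a Gram-type element coming from $c$.  Concretely, the left-hand side of the figure is a tangle $T$ with two internal boxes, into which we plug $a$ and $a^*$; the output lives in $P_{2p-q}$ and, because the picture is the ``doubling'' of a single $a$-box closed up against its adjoint, $Z_T(a\otimes a^*)$ is a positive element of $P_{2p-q}$ — it is of the form $y^\dagger\cdot y$ in the appropriate sense, or more directly it is manifestly of the form $\sum$ (something)$^*\cdot$(something) once one cuts the picture along a vertical line of symmetry.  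The right-hand side is the analogous doubling of a single $c$-box.  So the content of the lemma is: (i) the left side is positive, hence has a unique positive square-root-like description forcing existence and uniqueness of $c$, and (ii) the norm identity.

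First I would set up notation by labelling the strands: writing $N=2p-q$, the element $b:=Z_T(a\otimes a^*)\in P_N$ obtained from the left picture.  To see $b\geq 0$, I would cut the left-hand tangle of Figure~\ref{estimlem} along the horizontal mid-line separating $a$ from $a^*$; this exhibits $b=Z_S(a)^\dagger\,Z_S(a)$ for a suitable one-input tangle $S:P_p\to P_N$, where $\dagger$ here is just the planar-algebra $*$ composed with the appropriate rotation, so positivity is immediate from the $C^*$-axiom $Z_{T}(x)^*=Z_{T^*}(x^*)$ recalled in Section~2.  Then $c$ is forced to be the unique positive element of $P_{2p-q}$ whose ``doubling'' (the right picture, again $=Z_{S'}(c)^\dagger Z_{S'}(c)$ for the evident $S'$) equals $b$; uniqueness is the standard fact that in a finite-dimensional $C^*$-algebra a positive element is determined by $d\mapsto d^{1/2}$, or more precisely that the map $c\mapsto(\text{right picture})$ is injective on positive elements — which itself follows because $Z_{S'}$ restricted to the relevant corner is injective (one can recover $c$ from the right picture by capping off, using the modulus to cancel the resulting closed loops).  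I would phrase existence either this way, or even more cheaply by explicitly reading $c$ off the left-hand picture: $c$ is literally the element represented by the left tangle, now regarded as an $N$-box, and the equation of the figure becomes a tautology ``$c=c$'' after an isotopy — the positivity statement then being the substantive part.

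For the norm identity, I would compute $\|c\|_{H_k}^2=\delta^{-k}\delta^{\,2p-q}\,\tau\!\big((c^*c)\big)$ directly from the definition of the $H_k$-norm given at the end of Section~3, and similarly $\|a\|_{H_k}^2=\delta^{-k}\delta^{\,p}\,\tau(a^*a)$.  Now $\tau(c^*c)=\delta^{-(2p-q)}Z_{TR^0_{2p-q}}(c^*c)$, and plugging in the picture for $c$ (equivalently for $c^*c$ as a closed $0$-tangle fed by $a$, $a^*$, $c$, $c^*$) and using the defining equation of Figure~\ref{estimlem} to collapse the $c,c^*$ part, one is left with a closed diagram in $a,a^*$ only.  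A count of the closed loops created when the $i$ parallel strands get capped off (the strand labelled $i$ in the figure is the one that gets ``folded over'') produces exactly $i$ extra closed loops, each contributing a factor $\delta$ by the modulus axiom, and the remaining closed diagram evaluates to $\delta^{\,(\text{something})}\tau(a^*a)$; bookkeeping the powers of $\delta$ then yields $\|c\|_{H_k}^2=\delta^{i}\|a\|_{H_k}^2$.  The main obstacle, and the only place real care is needed, is this loop-and-exponent bookkeeping: one must track how the shading and the cabling labels in Figure~\ref{estimlem} interact with the trace tangle $TR^0_{2p-q}$, verify that capping $c^*c$ against the defining relation indeed leaves precisely $i$ free loops (and no more), and confirm that the leftover $a,a^*$ diagram is genuinely $\tau(a^*a)$ up to the expected normalising power of $\delta$ rather than some other pairing of $a$ with $a^*$.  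Everything else — existence, uniqueness, positivity — is a short consequence of the $C^*$-structure on $P$ and the sphericality already assumed.
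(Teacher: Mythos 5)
Your overall strategy --- show that the left-hand picture of Figure \ref{estimlem} is a positive element of the $C^*$-algebra $P_{2p-q}$, let $c$ be its positive square root, then apply the trace tangle and count loops for the norm identity --- is the paper's strategy, and your plan for the norm identity is essentially the paper's argument. The gap is in the positivity step. You claim that cutting the left-hand tangle along the line separating $a$ from $a^*$ exhibits it as $Z_S(a)^\dagger\, Z_S(a)$ for a single one-input tangle $S:P_p\to P_{2p-q}$, so that positivity is ``immediate.'' That cannot work in general: every one of the $q$ strands joining $a$ directly to $a^*$ and every one of the $i$ strands joining them around the side must cross any curve separating the two boxes at least once, so any such cut is crossed at least $q+i$ times, whereas exhibiting the element as a product of a $(2p-q)$-box with its adjoint requires exactly $2p-q$ crossings (isotopies and inserted cup--cap pairs can only add crossings, in pairs, at the cost of $\delta$-factors). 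Whenever $q+i>2p-q$ --- in particular whenever $q>p$, even with $i=0$ --- no such cut exists, and the element is not manifestly a square in your sense. This is exactly why the paper argues in two steps: it writes the left picture as $Z_{EL(i)^{2p-q}_{2p-q}}(b)$ and uses positivity of the left expectation tangle (Remark \ref{plalg}) to reduce to positivity of $b$, and then splits into cases --- for $q\le p$, $b$ is $\delta^{q-p}$ times a product of a $(2p-q)$-box with its adjoint (your cutting argument, with the cup--cap bookkeeping producing the factor), while for $q\ge p$ it is $Z_{ER^{2p-q}_p}(a^*a)$, the image of a positive element under a positive map. Note also that positivity of the expectation tangles is not a consequence of the $C^*$-axiom $Z_T(x)^*=Z_{T^*}(x^*)$ alone (which only gives self-adjointness); it comes from their being scaled trace-preserving conditional expectations, as recorded in Remark \ref{plalg}. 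Your argument, as written, covers only the favourable range of $(q,i)$.

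Two smaller corrections. The right-hand side of Figure \ref{estimlem} is simply the product $c\,c^*$ in $P_{2p-q}$, so uniqueness is nothing more than uniqueness of the positive square root of a positive element in a $C^*$-algebra; your alternative suggestions --- recovering $c$ from the right-hand picture ``by capping off,'' or reading the equation as a tautology with $c$ ``literally the element represented by the left tangle'' --- misread the figure: one cannot recover $c$ from $c\,c^*$ by capping, and the left picture equals $c^2$, not $c$. The loop-counting plan for $\|c\|^2_{H_k}=\delta^i\|a\|^2_{H_k}$ is correct and is what the paper does, but it does depend on carrying out the bookkeeping you defer, so it should be checked against the identity $\delta^{k-u}\|x\|^2_{H_k}=\tau(x^*x)$ for $x\in P_u$ as in the paper.
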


\begin{proof} To show the existence and uniqueness of $c$,
it clearly suffices to see that the picture on the left in Figure \ref{estimlem} defines a positive element of the $C^*$-algebra $P_{2p-q}$, for then, $c$ must be its positive square root.
But this element may be written as $Z_{EL(i)_{2p-q}^{2p-q}}(b)$
where $b$ is illustrated in Figure \ref{bfig}, and so by Remark \ref{plalg}
it suffices to see that $b$ itself is positive. The pictures on the
\begin{figure}[!htb]
\psfrag{or}{or $(ii)$}
\psfrag{a}{$a$}
\psfrag{b}{$b=$}
\psfrag{a^*}{$a^*$}
\psfrag{c^*}{$c^*$}
\psfrag{d}{$= (i)\ \delta^{q-p}$}
\psfrag{i}{\tiny $i$}
\psfrag{k}{\tiny $q$}
\psfrag{m-k}{\tiny $p-q$}
\psfrag{q-p}{\tiny $q-p$}
\psfrag{2m-k}{\tiny $2p-q$}
\psfrag{2p-q}{\tiny $2p-q$}
\psfrag{m}{\tiny $p$}
\psfrag{p}{\tiny $p$}
\psfrag{n+t-m-k}{\tiny $n\!+\!t\!-\!m\!-\!k$}
\psfrag{m+n+k-t}{\tiny $u$}
\psfrag{m+n+k-t = u}{\tiny $u=m\!+\!n\!+\!k\!-\!t$}
\includegraphics[height=3.5cm]{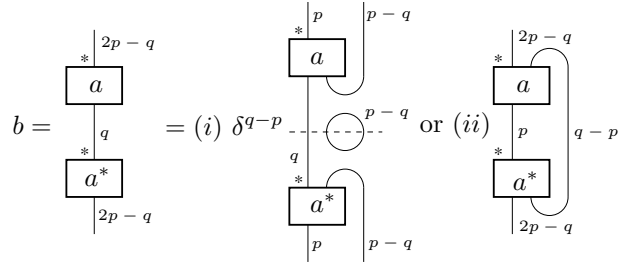}
\caption{Positivity of $b$}
\label{bfig}
\end{figure}
right in Figure \ref{bfig} exhibit $b$ as (i) a positive multiple of a product of an element of
$P_{2p-q}$ (the one below the dotted line) and its adjoint (the one above) if $q \leq p$ or as (ii) $Z_{ER_p^{2p-q}}(a^*a)$ (the image of  a positive element under a positive map) if $q \geq p$, proving that $b$ is positive in either case. The norm statement follows by applying the trace tangle $TR^0_{2p-q}$
 on both sides in Figure~\ref{estimlem}
and recalling that  $\delta^{k-u} ||x||^2_{H_k} = \tau(x^*x)$ for $x \in P_u \subseteq H_k$.
\end{proof}

\begin{proposition}\label{prop:estimate}
Suppose that $a = a_m \in P_m \subseteq F_k(P)$. There exists
a constant $K$ (depending only on $a$) so that, for all $b = b_n \in P_n
\subseteq F_k(P)$ and all $t$ such that $|m-n|+k \leq t \leq m+n-k$, we have $|| (a\#b)_t ||_{H_k} \leq K ||b||_{H_k}$.
\end{proposition}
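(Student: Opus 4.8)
The plan is to reduce the bound on $\|(a\#b)_t\|_{H_k}$ to an application of Lemma \ref{lem:estimlem}. First I would unwind the definition of $(a\#b)_t$ from Figure \ref{sharpdef}: it is obtained by placing $a \in P_m$ and $b \in P_n$ side by side, cabling $k$ of the strands straight through at the bottom, contracting $m+n-k-t$ pairs of strands between the two boxes, and leaving $m+t-n-k$ free strands on the $a$-side and $n+t-m-k$ free strands on the $b$-side. The key point is that, by a planar isotopy, this tangle can be read as: first apply to $a$ alone a tangle $T_a$ that produces an element of some $P_{2p-q}$ (with an appropriate choice of $q$ depending on $m,n,t,k$), then multiply (in the planar-algebra sense, i.e.\ via a suitable tangle) against $b$; equivalently, $(a\#b)_t = Z_S(a\otimes b)$ where $S$ factors through a box containing only $a$.

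The heart of the argument is the estimate $\|xy\|_{H_k} \le \|x\|_{\mathrm{op}}\,\|y\|_{H_k}$, valid because $H_k$ is (a completion of) a space on which left multiplication by a fixed element of a finite-dimensional $C^*$-algebra, suitably included, is bounded by its operator norm. Concretely, I expect the correct statement to be: after the isotopy, $(a\#b)_t$ is, up to a power of $\delta$, a product of the form $c \cdot (\text{image of }b)$ inside $P_t$, where $c \in P_{2p-q}$ is the positive element furnished by Lemma \ref{lem:estimlem} applied with $p=m$ and suitable $q,i$ — or else $c^*$ times such an image. Then one bounds $\|(a\#b)_t\|_{H_k}$ by $\|c\|_{P_{2p-q}}$ (the $C^*$-norm, which is finite and depends only on $a$) times $\|b\|_{H_k}$, using that the relevant inclusion and multiplication tangles are norm-nonincreasing up to explicit powers of $\delta$, all of which can be absorbed into $K$. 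The role of Lemma \ref{lem:estimlem} is precisely to guarantee that the "$a$-part" of the picture is (a scalar multiple of) a genuine element times its adjoint, so that taking a $C^*$-norm makes sense and is controlled solely by $a$.

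The main obstacle I anticipate is bookkeeping: matching the integer labels in Figure \ref{sharpdef} against the hypotheses $0\le q\le 2p$, $0\le i\le 2p-q$ of Lemma \ref{lem:estimlem}, verifying that the cabling numbers that arise are all nonnegative for every $t$ in the allowed range $|m-n|+k\le t\le m+n-k$, and checking that the isotopy genuinely exhibits $(a\#b)_t$ as a product with the $a$-factor isolated. I would handle this by drawing the picture for $(a\#b)_t^\dagger \# (a\#b)_t$ (or rather its $k$-component, which computes $\|(a\#b)_t\|_{H_k}^2$), recognising it as $\delta^{(\text{something})}\,t_k$ of an expression in which $b^\dagger, b$ sandwich the positive element $c^\dagger\# c$ of Lemma \ref{lem:estimlem}, and then invoking $\|b^\dagger \# y \# b\|$-type positivity together with the norm identity $\|c\|_{H_k}^2 = \delta^i\|a\|_{H_k}^2$ to extract the constant $K$. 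Since only the existence of \emph{some} $K$ depending on $a$ is required, I would not optimise the power of $\delta$; any crude bound obtained by replacing $c^\dagger\#c$ by $\|c\|^2_{P_{2p-q}}\cdot 1$ suffices.
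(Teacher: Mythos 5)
Your overall strategy --- pass to $\|(a\#b)_t\|_{H_k}^2$, use Lemma \ref{lem:estimlem} to turn the $a$-part into a positive element, and bound by an operator norm depending only on $a$ --- is indeed the paper's, but both concrete mechanisms you propose for executing it break down, and that is where the actual content lies. First, $(a\#b)_t$ cannot be exhibited, by any planar isotopy, as $c\cdot(\text{image of }b)$ with $c$ the element of Lemma \ref{lem:estimlem}: $(a\#b)_t$ is linear in $a$, whereas $c$ is the positive square root of an expression quadratic in $a,a^*$, so it cannot occur inside a planar expression for $(a\#b)_t$; moreover any arc separating the $a$-box from the $b$-box in the tangle of Figure \ref{sharpdef} crosses $m+n-k-t$ (possibly plus $k$) strands, which need not equal $t$, so $(a\#b)_t$ is not even a $P_t$-product of an image of $a$ with an image of $b$. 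Second, your fallback --- reading $\|(a\#b)_t\|^2_{H_k}$ as $b^\dagger,b$ sandwiching $c^\dagger\#c$ and then ``replacing $c^\dagger\#c$ by $\|c\|^2\cdot 1$'' --- is an operator inequality in the $\#$-algebra $F_k(P)$, i.e.\ precisely the assertion that multiplication by a fixed element acts boundedly on $H_k$ with norm controlled by a $C^*$-quantity; that is essentially the proposition being proved, so as written the step is circular. The inequality $\|xy\|_{H_k}\le\|x\|_{op}\,\|y\|_{H_k}$ you correctly identify as the heart of the matter is available only when $x,y$ lie in one and the same $P_u$ and $xy$ is the ordinary $C^*$-product there; the whole problem is to bring the picture into that form.

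What is missing, and what the paper does, is the following. Set $u=m+n+k-t$, rotate $b$ to $\tilde b=Z_{(R^n_n)^{-k}}(b)$, and cut the single trace diagram computing $\delta^k\|(a\#b)_t\|^2_{H_k}$ so that Lemma \ref{lem:estimlem} applies \emph{twice}: to the $a,a^*$ half (with $i=0$, $p=m$, $q=m-n+t-k$), giving $c_u\in P_u$, and to the $\tilde b,\tilde b^*$ half (with $i=k$, $p=n$, $q=n+t-m-k$), giving $d_u\in P_u$ with $\|d_u\|^2_{H_k}=\delta^k\|b\|^2_{H_k}$. Only then is the quantity literally $\delta^{u-k}\tau(d_u^*c_u^*c_ud_u)\le\delta^{u-k}\|c_u\|_{op}^2\,\tau(d_u^*d_u)=\delta^k\|c_u\|_{op}^2\,\|b\|^2_{H_k}$, a sandwich inside the single finite-dimensional $C^*$-algebra $P_u$, with no circularity. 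Note also that you cannot afford to ``not optimise the power of $\delta$'': since $\delta>1$ and $n,t$ are unbounded, the powers of $\delta$ arising from the caps must be seen to cancel exactly (as the second application of the lemma guarantees), and $K$ is obtained as the maximum of $\delta^{k/2}\|c_u\|_{op}$ over the finitely many values $2k\le u\le 2m$ --- it is this finiteness of the range of $u$, which you do not address, that makes the constant depend on $a$ alone and not on $n$ or $t$.
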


\begin{proof} 
By definition, $||(a\#b)_t||^2_{H_k}
= \delta^{-k} \delta^t \tau((a\#b)_t(a\#b)_t^*)$, and so $\delta^k ||(a\#b)_t||^2_{H_k}$ is given by -
using Figure \ref{sharpdef} -  the value of the tangle on the left in Figure \ref{estimate3}. Setting $\tilde{b} = Z_{(R^n_n)^{-k}}(b)$, this is also seen to be equal to the value of the
tangle in the middle in Figure \ref{estimate3}.
\begin{figure}[!htb]
\psfrag{a}{$a$}
\psfrag{=}{$=$}
\psfrag{b}{$b$}
\psfrag{a*}{$a^*$}
\psfrag{b*}{$b^*$}
\psfrag{Zb}{$\tilde{b}$}
\psfrag{Zb*}{$\tilde{b}^*$}
\psfrag{c}{$c_u$}
\psfrag{d}{$d_u^*$}
\psfrag{c*}{$c^*_u$}
\psfrag{bt}{$\tilde{b}$}
\psfrag{bt*}{$\tilde{b}^*$}
\psfrag{d*}{$d_u$}
\psfrag{k}{\tiny $k$}
\psfrag{2m-k}{\tiny $2m-k$}
\psfrag{n-k}{\tiny $n-k$}
\psfrag{m+t-n-k}{\tiny $m\!+\!t-$}
\psfrag{n+t-m-k}{\tiny $n\!+\!t\!-\!m\!-\!k$}
\psfrag{u}{\tiny $u$}
\psfrag{m+n-k-t}{\tiny $m\!+\!n\!-\!k\!-\!t$}
\includegraphics[height=5cm]{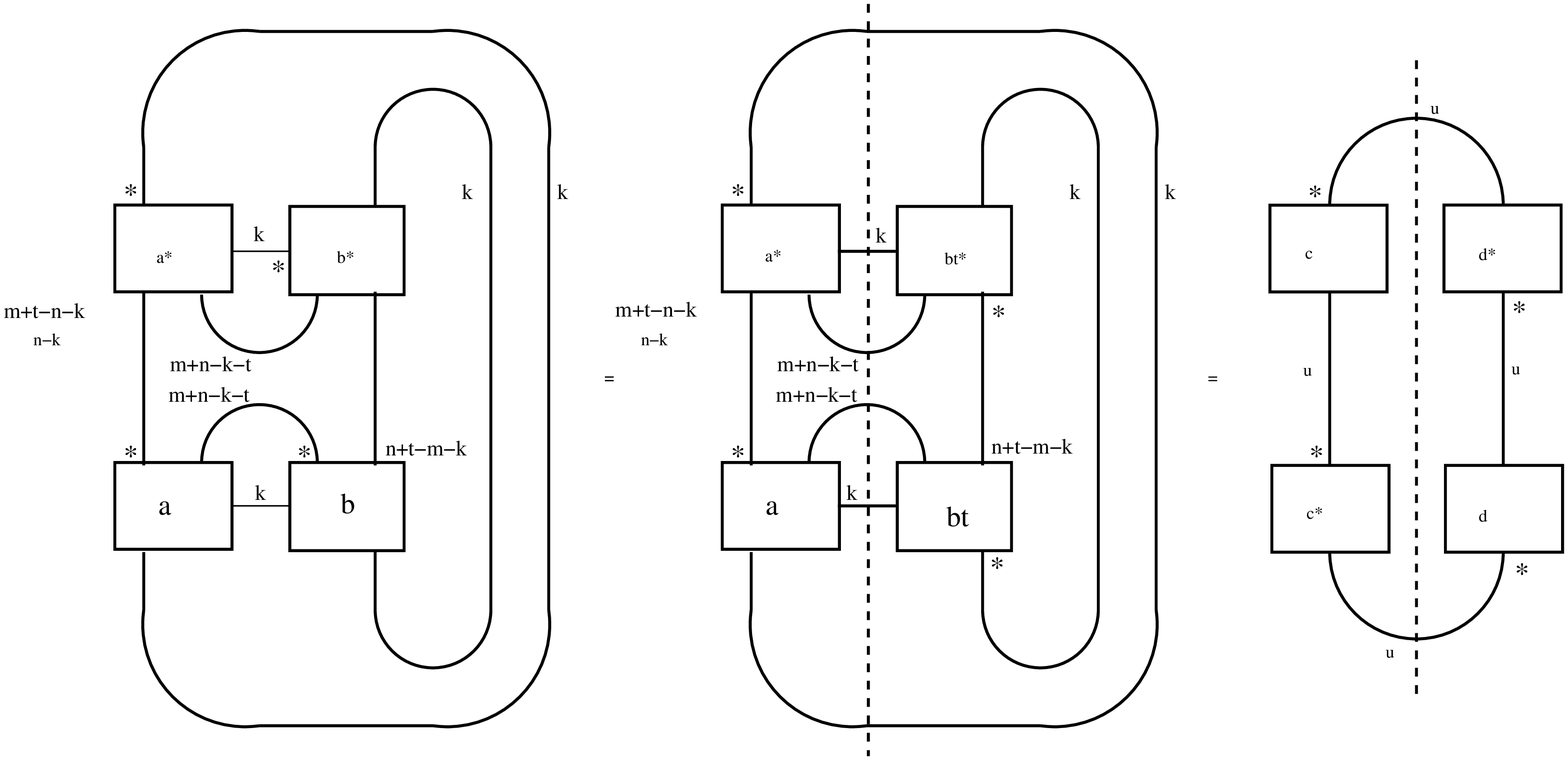}
\caption{$\delta^k ||(a\#b)_t||^2_{H_k}$}
\label{estimate3}
\end{figure}

Now let $u = m+n+k-t$ and note that $2k \leq u \leq min\{2m,2n\}$.
We now apply Lemma \ref{lem:estimlem} to the tangle on the left of the dotted line
(with $i=0$, $p=m$ and $q=m-n+t-k$) and to the (inverted) tangle on the right of the
dotted line (with $i=k$, $p=n$ and $q=n+t-m-k$) to conclude that there exist $c_u,d_u \in P_{u}$
such that $||c_u||_{H_k}^2 = ||a||_{H_k}^2$, $||d_u||_{H_k}^2 = \delta^k
||\tilde{b}||_{H_k}^2 = \delta^k
||b||_{H_k}^2$ and the second equality in Figure \ref{estimate3}
holds.
Therefore, $||(a\#b)_t||^2_{H_k} = \delta^{-k} \delta^{u} \tau(c_u^*
c_ud_ud_u^*) = \delta^{u-k} ||c_ud_u||^2_{P_{u}}$ $\leq \delta^{u-k} ||c_u||_{op}^2 ||d_u||^2_{P_{u}} =  ||c_u||_{op}^2 ||d_u||^2_{H_k} = \delta^k ||c_u||_{op}^2 ||b||^2_{H_k}$. Here, we write  $||x||^2_{P_u} = \tau(x^*x)$ and $||x||_{op}$ for the operator norm of $x$ in the $C^*$-algebra $P_u$.

Finally, take $K$ to be the maximum of $\delta^{\frac{k}{2}}||c_u||_{op}$
as $u$ varies between $2k$ and $2m$, which clearly depends only
on $a$.
\end{proof}

We are ready to prove boundedness of the left regular representation.
During the course of the proof we will need to use the fact that 
$||\sum_{i=1}^k a_i||^2 \leq k(\sum_{i=1}^k ||a_i||^2)$
for
vectors $a_1,\cdots,a_k$ in  an inner-product space, 
which follows on applying Cauchy-Schwarz to the vectors $(||a_1||,\cdots,||a_k||)$  and $(1,1,\cdots,1)$ in ${\mathbb C}^k$.

\begin{proposition}\label{prop:boundedness}
Suppose that $a \in P_m \subseteq F_k(P)$ and $\lambda_k(a) :
F_k(P) \rightarrow F_k(P)$ is defined by $\lambda_k(a)(b) = a\#b$.
Then there is a constant $C$ (depending only on $a$) so that $||\lambda_k(a)(b)||_{H_k} \leq
C||b||_{H_k}$ for all $b \in F_k(P)$.
\end{proposition}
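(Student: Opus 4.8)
The plan is to reduce the boundedness of $\lambda_k(a)$ acting on a general element $b = \sum_{n=k}^\infty b_n$ to the componentwise estimate already established in Proposition~\ref{prop:estimate}. The map $\lambda_k(a)$ sends $b$ to $a\#b = \sum_n a\#b_n$, and each $a\#b_n$ is a finite sum $\sum_{t=|m-n|+k}^{m+n-k}(a\#b_n)_t$, whose terms live in $P_t$ for $t$ ranging over $2m+1$ consecutive colours (independently of $n$). First I would fix $a \in P_m$ and let $K$ be the constant furnished by Proposition~\ref{prop:estimate}, so that $\|(a\#b_n)_t\|_{H_k} \le K\|b_n\|_{H_k}$ for every $n$ and every admissible $t$.

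Next I would exploit the orthogonality of the subspaces $P_t \subseteq H_k$ noted after Proposition~\ref{filtalg}: writing $\lambda_k(a)(b) = \sum_{t=k}^\infty c_t$ with $c_t \in P_t$, we have $\|\lambda_k(a)(b)\|_{H_k}^2 = \sum_t \|c_t\|_{H_k}^2$. The point is that $c_t = \sum_n (a\#b_n)_t$, where for fixed $t$ the index $n$ is forced to lie in the window $|t-m|+k \le n \le t+m-k$ (so that $(t,n)$ satisfies $|m-n|+k \le t \le m+n-k$), i.e. at most $2m+1$ values of $n$ contribute to $c_t$. Applying the inequality $\|\sum_{i=1}^r a_i\|^2 \le r\sum_{i=1}^r\|a_i\|^2$ recalled just before the statement, with $r \le 2m+1$, gives
\begin{eqnarray*}
\|c_t\|_{H_k}^2 \le (2m+1)\sum_{n}\|(a\#b_n)_t\|_{H_k}^2 \le (2m+1)K^2\sum_{n}\|b_n\|_{H_k}^2,
\end{eqnarray*}
the sum over the $n$-window associated to $t$.

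Finally I would sum over $t$ and interchange the order of summation: each pair $(t,n)$ contributing to the double sum has $t$ in a window of length $2m+1$ about $n$, so each term $\|b_n\|_{H_k}^2$ is counted at most $2m+1$ times. Hence
\begin{eqnarray*}
\|\lambda_k(a)(b)\|_{H_k}^2 = \sum_t \|c_t\|_{H_k}^2 \le (2m+1)^2 K^2 \sum_n \|b_n\|_{H_k}^2 = (2m+1)^2 K^2 \|b\|_{H_k}^2,
\end{eqnarray*}
so $C = (2m+1)K$ works, and it depends only on $a$. Since $F_k(P)$ is dense in $H_k$, $\lambda_k(a)$ extends to a bounded operator, and the analogous argument (or an appeal to the involution $\dagger$ via Proposition~\ref{filtalg}(2), identifying the right regular representation with $\lambda_k$ composed with $\dagger$) handles $\rho_k$.

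The only real bookkeeping obstacle is making sure the two windows — the $2m+1$ admissible values of $t$ for fixed $n$, and the $2m+1$ admissible values of $n$ for fixed $t$ — are correctly identified as the same symmetric relation $|m-n|+k \le t \le m+n-k$, and that the finitely-supported nature of $b$ makes all interchanges of summation legitimate before passing to the limit; there is no deep difficulty, just care with the index ranges. Everything analytic has already been absorbed into Lemma~\ref{lem:estimlem} and Proposition~\ref{prop:estimate}.
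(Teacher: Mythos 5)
Your proposal is correct and follows essentially the same route as the paper: decompose $b$ into its $P_n$-components, apply the componentwise estimate of Proposition~\ref{prop:estimate}, use the inequality $\|\sum_{i=1}^r a_i\|^2 \le r\sum_{i=1}^r\|a_i\|^2$ on the finite window of contributing indices, and interchange the two sums using the symmetry of the relation $|m-n|+k \le t \le m+n-k$. The only (inessential) difference is that the paper counts the window length as $1+2(m-k)$ rather than your cruder bound $2m+1$, yielding $C = K(1+2(m-k))$ instead of $(2m+1)K$.
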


\begin{proof}
Suppose that $b = \sum_{n=k}^\infty b_n$ (the sum being finite, of
course). Then $a\#b = \sum_{t=k}^\infty \sum_{n=k}^\infty (a\#b_n)_t
= \sum_{t=k}^\infty \sum_{n = |t-m| + k}^{t+m-k}  (a\#b_n)_t$, where
the last equality is by definition of where non-zero terms of $a\#b_n$ may  lie. Thus $(a\#b)_t = \sum_{n = |t-m| + k}^{t+m-k}  (a\#b_n)_t$ is a sum
of at most $1 + min\{2(t-k),2(m-k)\}$ terms.
By the remark preceding the proposition, it follows that
\begin{eqnarray*}
||(a\#b)_t||^2 _{H_k} &\leq& (1 + 2(m-k)) \sum_{n = |t-m| + k}^{t+m-k}  ||(a\#b_n)_t||^2_{H_k}\\
&\leq& K^2(1 + 2(m-k)) \sum_{n = |t-m| + k}^{t+m-k}  ||b_n||^2_{H_k}
\end{eqnarray*}
where $K$ is as in Proposition \ref{prop:estimate}.
Therefore,
\begin{eqnarray*}
||a\#b||^2_{H_k} &=&  \sum_{t=k}^\infty
||(a\#b)_t||^2_{H_k}\\
&\leq&  K^2(1 + 2(m-k)) \sum_{t=k}^\infty  \sum_{n = |t-m| + k}^{t+m-k} ||b_n||^2_{H_k}\\
&=&  K^2(1 + 2(m-k)) \sum_{n=k}^\infty \sum_{t = |n-m| +k}^{n+m-k} ||b_n||^2_{H_k}\\
&=&  K^2(1 + 2(m-k)) \sum_{n=k}^\infty (1+min\{2(n-k),2(m-k)\}) ||b_n||^2_{H_k}\\
&\leq& K^2(1 + 2(m-k))^2 ||b||^2_{H_k}.
\end{eqnarray*} So we may choose $C$ to be $K(1+2(m-k))$.
\end{proof}

It follows from Proposition \ref{prop:boundedness} that for
any $a \in F_k(P)$, the map $\lambda_k(a)$ is bounded and thus
extends uniquely to an element of ${\mathcal L}(H_k)$. We
then get a $*$-representation $\lambda_k : F_k(P) \rightarrow
{\mathcal L}(H_k)$.

In order to avoid repeating hypotheses we make the following
definition. By a finite pre-von Neumann algebra, we will mean
a complex $*$-algebra $A$ that is equipped with a normalised trace $t$ such that (i) the sesquilinear form defined by $\langle a^\prime|a \rangle = t(a^*a^\prime)$
defines an inner-product on $A$ and such that (ii) for each $a \in A$,  the left-multiplication map $\lambda_A(a) : A \rightarrow A$ is bounded for
the trace induced norm of $A$.
Examples that we will be interested in are the $F_k(P)$ with
their natural traces $t_k$ for $k \geq 0$.
We then have the following simple lemma which motivates the terminology and whose proof we sketch in some detail.

\begin{lemma}\label{single}
Let $A$ be a finite pre-von Neumann algebra with trace $t_A$,
and $H_A$ be the Hilbert space completion
of $A$ for the associated norm, so that the left regular representation $\lambda_A : A \rightarrow {\mathcal L}(H_A)$ 
is well-defined, i.e., for each $a \in A$, $\lambda_A(a) : A \rightarrow A$ extends to a bounded operator on $H_A$. Then, 
\begin{enumerate} 
\item [(0)] The right regular representation
$\rho_A : A \  \rightarrow {\mathcal L}(H_A)$ is also well-defined.
\end{enumerate}
Let $M^\lambda_A = \lambda_A(A)''$ and $M^\rho_A = \rho_A(A)''$. Then
the following statements hold:
\begin{enumerate}
\item The `vacuum vector' $\Omega_A \in H_A$ (corresponding to $1 \in A \subseteq H_A$) is cyclic and separating for both $M^\lambda_A$ and $M^\rho_A$.
\item $M^\lambda_A$ and $M^\rho_A$ are commutants of each other.
\item The trace $t_A$ extends to faithful, normal, tracial states $t^\lambda_A$ and $t^\rho_A$ on $M^\lambda_A$ and $M^\rho_A$ respectively. 
\item 
$H_A$ can be identified with
the standard modules $L^2(M^\lambda_A,t^\lambda_A)$ and $L^2(M^\rho_A,t^\rho_A)$.
\end{enumerate}
\end{lemma}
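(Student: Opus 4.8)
The plan is to prove the five numbered items more or less in the order stated, treating (0) as a preliminary and then exploiting the abstract Tomita–Takesaki/standard-form package that applies to any finite von Neumann algebra with a faithful normal tracial state. None of this is special to $F_k(P)$; it is the standard machinery attached to a ``tracial $*$-algebra with bounded left regular representation'', so the goal is really to verify the hypotheses of that machinery once and for all.

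\medskip

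First I would dispose of (0). Since $t_A$ is a trace, the map $J_A : A \to A$, $J_A(a) = a^*$, is isometric for the inner product $\langle a' | a\rangle = t_A(a^*a')$ (because $t_A(a a^*) = t_A(a^* a)$), so it extends to a conjugate-linear involutive isometry $J_A$ of $H_A$. For $a, b \in A$ one checks directly that $J_A \lambda_A(a) J_A$, applied to $b \in A$, gives $(a^* b^*)^* = b\, a = \rho_A(a)(b)$; hence $\rho_A(a) = J_A \lambda_A(a) J_A$ is bounded with the same norm as $\lambda_A(a)$, and $\rho_A$ is a well-defined $*$-antirepresentation (anti- because of item (2) of Proposition \ref{filtalg}; here $\rho_A(a)$ uses $b \mapsto b \# a$, which is an anti-homomorphism in $a$). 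This simultaneously gives the relation $J_A M^\lambda_A J_A = M^\rho_A$ that will be used repeatedly.

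\medskip

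Next, (1): $\Omega_A$ is cyclic for $M^\lambda_A$ because $\lambda_A(A)\Omega_A = A$ is by definition dense in $H_A$; and $\Omega_A$ is separating for $M^\lambda_A$ because it is cyclic for the commuting algebra $\rho_A(A)$ — indeed if $x \in M^\lambda_A$ and $x\Omega_A = 0$ then for all $a \in A$, $x\rho_A(a)\Omega_A = \rho_A(a) x \Omega_A = 0$, so $x$ kills the dense set $A$, whence $x = 0$. The roles of $\lambda$ and $\rho$ are symmetric via $J_A$, giving the statement for $M^\rho_A$ too. For (3), define $t^\lambda_A(x) = \langle x\Omega_A | \Omega_A\rangle$; this is a normal state on $M^\lambda_A$ extending $t_A$ (since $\langle \lambda_A(a)\Omega_A|\Omega_A\rangle = t_A(a)$), it is tracial because $t_A$ is a trace on $A$ and $\Omega_A$ is cyclic (the identity $\langle xy\Omega_A|\Omega_A\rangle = \langle yx\Omega_A|\Omega_A\rangle$ first for $x,y \in \lambda_A(A)$, then by separate $\sigma$-weak continuity of multiplication and normality of the vector state), and it is faithful precisely because $\Omega_A$ is separating; similarly for $t^\rho_A$. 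Then (4) is immediate: $(H_A, \Omega_A)$ is by construction the GNS triple of $(M^\lambda_A, t^\lambda_A)$ — $\Omega_A$ is cyclic and separating and implements the trace — and this is exactly the defining data of the standard module $L^2(M^\lambda_A, t^\lambda_A)$; the same argument with $\rho$ identifies $H_A$ with $L^2(M^\rho_A, t^\rho_A)$.

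\medskip

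The one genuinely non-formal step is (2), the commutation theorem $M^\lambda_A{}' = M^\rho_A$. One inclusion, $M^\rho_A \subseteq M^\lambda_A{}'$, is easy: $\lambda_A(a)$ and $\rho_A(b)$ commute on the dense subspace $A$ by associativity of $\#$ (Proposition \ref{filtalg}(1)), hence on $H_A$, so $\rho_A(A) \subseteq \lambda_A(A)'$ and taking double commutants (and noting $\lambda_A(A)'$ is already von Neumann) gives $M^\rho_A \subseteq M^\lambda_A{}'$. The reverse inclusion is the substantive content and is exactly the commutation theorem for a finite von Neumann algebra in standard form: since $\Omega_A$ is cyclic and separating and the associated modular operator is trivial (the state is a trace, so $\Delta = 1$ and $S = J_A$ on the relevant core), Tomita–Takesaki gives $J_A M^\lambda_A J_A = M^\lambda_A{}'$; combined with $J_A M^\lambda_A J_A = M^\rho_A$ from step (0) this yields $M^\lambda_A{}' = M^\rho_A$. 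I expect this invocation of Tomita–Takesaki (or, if one prefers a self-contained route, the classical trace-vector argument showing $M^\lambda_A{}' \subseteq \overline{\rho_A(A)}^{\text{so}}$ via approximating a commutant element's action on $\Omega_A$) to be the main obstacle, in the sense that it is the only place one cannot simply ``push symbols around''; everything else is bookkeeping with the inner product and the definitions of $\#$ and $\dagger$.
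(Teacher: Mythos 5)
Your proposal is correct and its overall skeleton is the same as the paper's: construct the conjugate-linear isometry $J_A$ extending $a\Omega_A\mapsto a^*\Omega_A$, realize the right regular representation as $J_A\lambda_A(\,\cdot\,^*)J_A$ to get (0), deduce cyclicity/separation, obtain the traces as vector states, and identify $H_A$ with the standard module. The one place you diverge is the substantive step (2): the paper does \emph{not} quote Tomita--Takesaki but gives a short self-contained argument, introducing the weakly closed set $K=\{x\in{\mathcal L}(H_A): J_Ax\Omega_A=x^*\Omega_A\}$, checking by a four-line inner-product computation that $K$ contains $\lambda_A(A)$, $\rho_A(A)$, $(M^\lambda_A)'$ and $(M^\rho_A)'$, and then reading off $J_AM^\lambda_AJ_A=(M^\lambda_A)'=M^\rho_A$; this is exactly the ``classical trace-vector argument'' you mention as a fallback. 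Your route via the general modular theory is valid --- since $t^\lambda_A$ is a trace (which you correctly establish before (2), so there is no circularity), $S_0:x\Omega_A\mapsto x^*\Omega_A$ is isometric on $M^\lambda_A\Omega_A$, its closure agrees with $J_A$ by density of $A\Omega_A$, hence $\Delta=1$ and $JM^\lambda_AJ=(M^\lambda_A)'$ --- but it buys generality you don't need at the cost of a large black box, whereas the paper's $K$-argument keeps the lemma elementary and self-contained, in keeping with its stated aim. One small slip to fix: $J_A\lambda_A(a)J_A(b)=J_A(a\,b^*)=b\,a^*=\rho_A(a^*)(b)$, so the correct identity is $\rho_A(a)=J_A\lambda_A(a^*)J_A$ (as in the paper), not $\rho_A(a)=J_A\lambda_A(a)J_A$; this is harmless for everything that follows, since in either form $J_A\lambda_A(A)J_A=\rho_A(A)$ and hence $J_AM^\lambda_AJ_A=M^\rho_A$.
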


\begin{proof}
Start with the modular conjugation operator $J_A$ which is
the unique bounded extension, to $H_A$, of the involutive, conjugate-linear, isometry defined on the dense subspace $A\Omega_A \subseteq H_A$ by $a\Omega_A \mapsto a^*\Omega_A$, and which satisfies
$J_A = J_A^* = J_A^{-1}$ (where $J^*$ is defined by $\langle J^*\xi | \eta \rangle
= \langle J\eta | \xi \rangle$ for conjugate linear $J$).\\
(0) Note that $J_A\lambda_A(a^*)J_A(\tilde{a}\Omega_A) = J_A\lambda_A(a^*)(\tilde{a}^*\Omega_A) = J_A(a^*{\tilde{a}}^*\Omega_A) = \tilde{a} a\Omega_A = \rho_a(\tilde{a}\Omega_A)$ and so for each $a \in A$, $\rho_A(a) : A \rightarrow A$ extends to the bounded operator $J_A\lambda_{A}(a^*)J_A$ on $H$.
The map $\lambda_A : A \rightarrow {\mathcal L}(H)$ is a $*$-homomorphism while
the map $\rho_A : A \rightarrow {\mathcal L}(H)$ is a $*$-anti-homomorphism.\\
(1) Since $\lambda_A(A) \subseteq {\mathcal L}(H_A)$ and $\rho_A(A) \subseteq {\mathcal L}(H_A)$ clearly commute, so do $M^\lambda_A$ and $M^\rho_A$. So each is contained
in the commutant of the other and it follows easily from definitions that $\Omega_A$ is cyclic and hence separating for both $M^\lambda_A$ and $M^\rho_A$.\\
(2) Observe now that the subspace $K = \{x \in {\mathcal L}(H_A) : J_Ax \Omega_A = x^* \Omega_A\}$ is weakly closed and 
contains $M^\lambda_A$, $M^\rho_A$ and their commutants. (Reason: That $K$ is weakly closed and contains $\lambda_A(A)$ and $\rho_A(A)$
is obvious, and so $K$ also contains their weak closures $M^\lambda_A$ and $M^\rho_A$. But now, for $x^\prime \in (M^\lambda_A)^\prime \cup
(M^\rho_A)^\prime$
and $a\Omega_A
\in A\Omega_A$, we have 
\begin{eqnarray*}
\langle J_A x^{\prime} \Omega_A | a \Omega_A \rangle &=& \langle J_A a \Omega_A | x^{\prime} \Omega_A \rangle\\
 &=&
\langle a^* \Omega_A | x^\prime \Omega_A \rangle\\
&=&  \langle \Omega_A |
x^\prime a\Omega_A \rangle\\
 &=& \langle x^{\prime *} \Omega_A | a \Omega_A \rangle,
\end{eqnarray*}
where the third equality follows from interpreting $a^*\Omega_A$
as either $\lambda_A(a)^*\Omega_A$ or $\rho_A(a)^*\Omega_A$
according as $x^\prime \in (M^\lambda_A)^\prime$ or $x^\prime \in (M^\rho_A)^\prime$.
Density of $A\Omega_A$ in $H$ now implies that $J_Ax^\prime \Omega_A =
x^{\prime *} \Omega_A$).

Since $K \supseteq M^\lambda_A$, it is easy to see that $J_A M^\lambda_A J_A \subseteq (M^\lambda_A)^\prime$, and similarly since $K \supseteq
(M^\lambda_A)^\prime$, we have $J_A (M^\lambda_A)^\prime J_A \subseteq 
(M^\lambda_A)^{\prime\prime} =
M^\lambda_A$. Comparing, we find indeed that $J_A M^\lambda_A J_A = (M^\lambda_A)^\prime$.
On the other hand, taking double commutants of
$J_A \lambda_A(A) J_A = \rho_A(A)$ gives $J_A M^\lambda_A J_A = M^\rho_A$ and
so $(M^\lambda_A)^\prime = M^\rho_A$.\\
(3) Define a linear functional $t$ on ${\mathcal L}(H_A)$ by $t(x) = \langle x\Omega_A | \Omega_A \rangle$ and observe that this extends the
trace $t_A$ on $A$ where $A$ is regarded as contained in ${\mathcal L}(H_A)$ via either $\lambda_A$ or $\rho_A$. Define $t^\lambda_A = t|_{M^\lambda_A}$ and $t^\rho_A = t|_{M^\rho_A}$. A little thought shows that
these are traces on $M^\lambda_A$ and $M^\rho_A$. Positivity is clear
and faithfulness is a consequence of the fact that $\Omega_A$ is 
separating for $M^\lambda_A$ and $M^\rho_A$. Normality (= continuity for the $\sigma$-weak topology) holds since
$t$ is a vector state on ${\mathcal L}(H_A)$.\\
(4) This is an easy consequence of (3).
\end{proof}

We conclude that if $M^\lambda_k = \lambda_k(F_k(P))'' \subseteq {\mathcal L}(H_k)$ and
$M^\rho_k = \rho_k(F_k(P))''  \subseteq {\mathcal L}(H_k)$ and $\Omega_k \in H_k$ is the
vacuum vector $1 = 1_k \in P_k \subseteq F_k(P) \subseteq H_k$,
then $M^\lambda_k$ and $M^\rho_k$ are finite von Neumann
algebras
(equipped with faithful, normal, tracial states $t^\lambda_k$ and
$t^\rho_k$)  that are commutants of each other and
$H_k$  can be identified as the standard module for both, with
$\Omega_k$ being a cyclic and separating trace-vector 
for both.  

In order to get 
a tower of von Neumann algebras we introduce a little more terminology.
By a compatible pair of finite pre-von Neumann algebras, we will
mean a pair $(A,t_A)$ and $(B,t_B)$ of finite pre-von Neumann algebras
such that $A \subseteq B$ and $t_B|_A = t_A$.
In particular, for any $k \geq 0$, $F_k(P) \subseteq F_{k+1}(P)$
equipped with their natural traces $t_k$ and $t_{k+1}$
give examples.
Given such a pair of compatible pre-von Neumann algebras,
identify $H_A$ with a subspace of
$H_B$ so that $\Omega_A = \Omega_B = \Omega$, say.

We will need the following lemma which is an easy consequence of
Theorem II.2.6 and Proposition III.3.12 of \cite{Tks}.

\begin{lemma}\label{normality}
Suppose that $M \subseteq {\mathcal L}(H)$ and $N \subseteq {\mathcal L}(K)$ are von Neumann algebras and 
$\theta: M \rightarrow N$ is a $*$-homomorphism
such that if $x_i \rightarrow x$ is a norm bounded strongly convergent net in $M$, then $\theta(x_i) \rightarrow \theta(x)$ strongly in $N$.
Then $\theta$ 
is a normal map and its image is
 a von Neumann subalgebra of $N$.\qed
\end{lemma}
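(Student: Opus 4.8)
The statement to prove is Lemma~\ref{normality}, which asserts that a $*$-homomorphism $\theta\colon M\to N$ between von Neumann algebras that is sequentially/net-wise strongly continuous on bounded sets is automatically normal and has a von Neumann subalgebra as its image.

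\medskip

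\noindent\emph{Proof proposal.}

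The plan is to deduce both assertions directly from the two cited facts in Takesaki~\cite{Tks}, rather than argue from scratch. First I would recall precisely what those references give. Theorem~II.2.6 of \cite{Tks} is the standard characterization of normality of a positive linear map on a von Neumann algebra: for a positive map, $\sigma$-weak continuity is equivalent to complete additivity on orthogonal families of projections, and also to the condition that $\theta$ preserves suprema of bounded increasing nets. Proposition~III.3.12 of \cite{Tks} is the statement that a normal $*$-homomorphism between von Neumann algebras has weakly (equivalently $\sigma$-weakly) closed image, i.e.\ its image is a von Neumann subalgebra. So once normality is established, the ``image is a von Neumann subalgebra'' clause is immediate from Proposition~III.3.12, and the only real content is showing that the bounded-strong-continuity hypothesis forces normality.

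For normality: since $\theta$ is a $*$-homomorphism it is positive, so by Theorem~II.2.6 it suffices to check that $\theta$ preserves suprema of bounded monotone increasing nets, or equivalently is strongly continuous on bounded monotone nets. But if $x_i\uparrow x$ is a bounded increasing net of self-adjoint elements of $M$, then $x_i\to x$ strongly (Vigier's theorem / the standard fact that bounded monotone nets converge strongly to their supremum in a von Neumann algebra), and this net is norm bounded; hence by hypothesis $\theta(x_i)\to\theta(x)$ strongly in $N$. Since $\theta$ is positive, $\theta(x_i)$ is an increasing bounded net in $N$, so it has a supremum $y=\sup_i\theta(x_i)$ and $\theta(x_i)\uparrow y$ strongly; comparing the two strong limits gives $\theta(x)=y=\sup_i\theta(x_i)$. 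Thus $\theta$ preserves suprema of bounded increasing nets, so by Theorem~II.2.6 it is $\sigma$-weakly continuous, i.e.\ normal. Then Proposition~III.3.12 applies and the image is a von Neumann subalgebra of $N$.

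The main obstacle, such as it is, is purely a matter of citing the right equivalent form of normality: one must make sure the version of Theorem~II.2.6 being invoked really does characterize normality of a positive map by preservation of suprema of bounded increasing nets (some formulations phrase this only for $\sigma$-finite algebras or only for normal functionals), and that the passage from strong convergence of the bounded monotone net to the supremum statement is handled correctly. A slightly cleaner alternative, avoiding any subtlety about nets, is to restrict attention to increasing sequences of projections and use complete additivity: for a bounded orthogonal family $\{p_j\}$ of projections in $M$ with sum $p$, the partial sums form a bounded increasing net converging strongly to $p$, so $\theta$ of the partial sums converges strongly to $\theta(p)$, giving $\sum_j\theta(p_j)=\theta(p)$; this is exactly the complete additivity hypothesis in Theorem~II.2.6 and again yields normality. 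Everything else is routine, and since the excerpt explicitly flags this lemma as ``an easy consequence'' of the two cited results, no further detail beyond this reduction should be needed.
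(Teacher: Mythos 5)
Your proof is correct and follows essentially the route the paper intends: the paper gives no argument at all beyond citing Theorem II.2.6 and Proposition III.3.12 of Takesaki, and your reduction --- positivity of $\theta$, strong convergence of a norm-bounded increasing net to its supremum, hence preservation of suprema (equivalently complete additivity) and thus normality, followed by $\sigma$-weak closedness of the image of a normal $*$-homomorphism --- is the standard way of deducing the lemma from exactly those two results. No gap to report.
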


\begin{proposition}\label{comppair}
Let $(A,t_A)$ and $(B,t_B)$ be a compatible pair of finite pre-von
Neumann algebras and 
$\Omega$ be
as above.
Let $\lambda_A : A \rightarrow {\mathcal L}(H_A)$ and $\lambda_B : B \rightarrow {\mathcal L}(H_B)$ be the left regular representations
of $A$ and $B$ respectively and let $M^\lambda_A = \lambda_A(A)''$
and $M^\lambda_B = \lambda_B(B)''$.
Then,
\begin{enumerate}
\item The inclusion $A \subseteq B$ extends uniquely to a normal
inclusion, say $\iota$, of $M^\lambda_A$ into $M^\lambda_B$
with image $\lambda_B(A)''$
(where $A \subseteq M^\lambda_A$ by identification with $\lambda_A(A)$).
\item For any $a'' \in M^\lambda_A$, $a''\Omega = \iota(a'')\Omega$,
and in particular, $M^\lambda_A \Omega \subseteq M^\lambda_B \Omega$.
\end{enumerate}
\end{proposition}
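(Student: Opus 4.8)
The plan is to reduce both assertions to the single non-trivial analytic input, namely Lemma \ref{normality}, applied to a carefully chosen $*$-homomorphism. The natural candidate for $\iota$ is the unique extension of the map $\lambda_A(a) \mapsto \lambda_B(a)$ (for $a \in A$) to the von Neumann algebra generated. To make sense of this I would first observe that the Hilbert space $H_A$ sits isometrically inside $H_B$ as a closed subspace with $\Omega_A = \Omega_B = \Omega$, and that under this identification $\lambda_A(a) = \lambda_B(a)|_{H_A}$ for $a \in A$; indeed $\lambda_B(a)$ maps $A\Omega \subseteq H_A$ into itself, acting there exactly as $\lambda_A(a)$ does. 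Thus $H_A$ is an invariant subspace for $\lambda_B(A)$, hence for $\lambda_B(A)''$, and compression to $H_A$ gives a normal $*$-homomorphism $c : \lambda_B(A)'' \to \mathcal{L}(H_A)$ which on generators satisfies $c(\lambda_B(a)) = \lambda_A(a)$.

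Next I would check that this compression $c$ is in fact an isomorphism onto $M^\lambda_A = \lambda_A(A)''$. Injectivity is the key point: if $x \in \lambda_B(A)''$ compresses to $0$ on $H_A$, then in particular $x\Omega \in H_A^\perp$; but $x\Omega$ lies in the closure of $\lambda_B(A)\Omega \subseteq A\Omega \subseteq H_A$, forcing $x\Omega = 0$, and since $\Omega$ is separating for $\lambda_B(B)'' \supseteq \lambda_B(A)''$ (by Lemma \ref{single}(1) applied to $B$), we get $x = 0$. Surjectivity onto $M^\lambda_A$ follows because $c$ is normal with $\sigma$-weakly dense range $\lambda_A(A)$, so its image is a von Neumann subalgebra of $\mathcal{L}(H_A)$ containing $\lambda_A(A)$, hence equals $\lambda_A(A)'' = M^\lambda_A$. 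Therefore $\iota := c^{-1} : M^\lambda_A \to \lambda_B(A)'' \subseteq M^\lambda_B$ is a well-defined normal injective $*$-homomorphism extending $A \subseteq B$, with image exactly $\lambda_B(A)''$; uniqueness of such an extension is immediate from $\sigma$-weak density of $\lambda_A(A)$ in $M^\lambda_A$ together with normality.

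For part (2), I would simply note that for $a'' \in M^\lambda_A$ we have $c(\iota(a'')) = a''$, i.e. $\iota(a'')$ restricted to $H_A$ equals $a''$; applying both sides to $\Omega \in H_A$ gives $\iota(a'')\Omega = a''\Omega$. Since $\iota(a'') \in M^\lambda_B$, this shows $a''\Omega = \iota(a'')\Omega \in M^\lambda_B\Omega$, which is the claimed inclusion $M^\lambda_A\Omega \subseteq M^\lambda_B\Omega$.

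The main obstacle, and the step deserving the most care, is verifying the hypothesis of Lemma \ref{normality} for the compression map — or, more precisely, confirming that compression to the invariant subspace $H_A$ genuinely lands inside $\mathcal{L}(H_A)$ and is weakly/strongly continuous on bounded sets; this is where one uses that $H_A$ is closed and $\lambda_B(A)''$-invariant. The underlying bounded-net continuity is automatic for a compression map, so the real content is the identification $\lambda_B(a)|_{H_A} = \lambda_A(a)$, which unwinds to the compatibility $t_B|_A = t_A$ guaranteeing that the inclusion $A\Omega \hookrightarrow B\Omega$ is isometric for the two $L^2$-norms and intertwines the two left actions. Once that bookkeeping is in place, everything else is a routine application of the commutant and density machinery already assembled in Lemmas \ref{single} and \ref{normality}.
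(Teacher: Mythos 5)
Your proposal is correct and follows essentially the same route as the paper: identify $H_A$ as a closed $\lambda_B(A)$-invariant (hence $\lambda_B(A)''$-invariant) subspace of $H_B$, compress to $H_A$, deduce injectivity from $\Omega$ being separating, invoke Lemma \ref{normality} for normality and for the image being a von Neumann algebra equal to $M^\lambda_A$, and take $\iota$ to be the inverse, with part (2) immediate from the compression identity at $\Omega$. No substantive differences to report.
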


\begin{proof} (1) The subspace $H_A$ of $H_B$ is stable for $\lambda_B(A)$ and hence also for its double commutant. Since $\Omega \in H_A$ is
separating for $M^\lambda_B$ and hence also for $\lambda_B(A)''$,
it follows that the map of compression to $H_A$  is an injective 
$*$-homomorphism from $\lambda_B(A)''$ with image contained 
in $\lambda_A(A)'' = M^\lambda_A$. This map 
is clearly strongly continuous and so by Lemma \ref{normality}
it is normal and its image which is a von Neumann algebra containing
$\lambda_A(A)$ must
be $M^\lambda_A$. Just let $\iota$ be the inverse map.\\
(2) Since the compression to $H_A$ of $\iota(a'')$
is $a''$, $\Omega \in H_A$ and $H_A$ is stable for
$\lambda_B(A)''$, this is immediate.
\end{proof}

\begin{remark}
It can be verified that $\iota$ is the map defined by 
 $\iota(x)(b\Omega) = J_B(b^{*}x^* \Omega)$
for $x \in M^\lambda_A$ and $b \in B$.
\end{remark}

Applying Proposition \ref{comppair} to the tower $F_0(P) 
\subseteq F_1(P) \subseteq F_2(P) \subseteq \cdots$, each
pair of successive terms of which is a compatible pair
of pre-von Neumann algebras, 
we finally have a tower $M^\lambda_0 \subseteq M^\lambda_1 \subseteq \cdots$
of finite von Neumann algebras. 
Nevertheless we continue to regard $M^\lambda_k$ as a subset of
${\mathcal L}(H_k)$ and note that the $M^\lambda_k $ have a common cyclic and separating
vector $H_0 \ni \Omega = \Omega_1 = \Omega_2 = \cdots$.

\section{Factoriality and relative commutants}

In this section we show that the tower $M^\lambda_0 \subseteq
M^\lambda_1 \subseteq \cdots$ of finite von Neumann algebras constructed
in Section 4 is in fact a tower of $II_1$-factors, and more
generally that $(M^\lambda_0)' \cap M^\lambda_k$ is
$P_k \subseteq F_k(P) \subseteq M^\lambda_k$. We begin by justifying the words `more generally' of the previous sentence.
Throughout this section, we fix a $k \geq 0$.

\begin{lemma}\label{moregen}
Suppose that $(M^\lambda_0)' \cap M^\lambda_k$ is
$P_k \subseteq F_k(P) \subseteq M^\lambda_k$. Then, for $1 \leq i \leq k$, the relative commutant  $(M^\lambda_i)' \cap M^\lambda_k
= P_{i,k} \subseteq P_k$ (where $P_{i,k}$ is as in the
last paragraph of Section 2). In particular,
$M^\lambda_k$ is a factor.
\end{lemma}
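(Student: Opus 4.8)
The plan is to reduce the statement for general $i$ with $1\le i\le k$ to the hypothesis (the case $i=0$), using the explicit description of $P_{i,k}$ as the image of the left-expectation tangles $EL(i)_k^k$ together with Proposition~\ref{comppair} and the bimodule/retraction properties of the maps $E_j$ established in Proposition~\ref{filtalg}. Concretely, I would argue that inside $M^\lambda_k$ the subalgebra $M^\lambda_i$ is, by Proposition~\ref{comppair}(1), the double commutant $\lambda_k(F_i(P))''$, so $(M^\lambda_i)'\cap M^\lambda_k = (F_i(P))'\cap M^\lambda_k$. Since $F_i(P)\supseteq F_0(P)$, this relative commutant is contained in $(F_0(P))'\cap M^\lambda_k = (M^\lambda_0)'\cap M^\lambda_k$, which by hypothesis is $P_k$. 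Hence $(M^\lambda_i)'\cap M^\lambda_k \subseteq P_k$, and what remains is to identify, for $x\in P_k$, the condition ``$x$ commutes with all of $F_i(P)$'' with ``$x\in P_{i,k}$''.

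For that identification I would work entirely inside the finite-dimensional $C^*$-algebra $P_k$ and use the planar-algebraic description of the $\#$-multiplication and the inclusion tangles. The element $1_i\in P_i\subseteq F_i(P)$ includes, via the iterated inclusion tangle of Figure~\ref{incldef}, into a specific element of $P_k$ — essentially the Jones-type element obtained by capping off $2(k-i)$ strands — and more generally $P_i\subseteq F_i(P)$ maps into $P_k$ by a tangle that adds $2(k-i)$ ``through'' strands on the appropriate side. The key computation is that for $x\in P_k$, commuting with this entire image is equivalent to $x$ being fixed by the tangle that first caps and then re-opens those $2(k-i)$ strands, i.e. to $x$ lying in the range of $EL(i)_k^k$; this is exactly the standard fact identifying the relative commutant $M_i'\cap M_k$ with the image of a left conditional expectation, transcribed into tangle language. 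One subtlety to handle carefully is that commuting with $P_i\subseteq F_i(P)$ must be checked not only against $P_i$ but against all the higher components $P_n\subseteq F_i(P)$ for $n>i$; here I would use that those higher components are generated (under $\#$) by $P_i$ together with elements already in $P_k\cap F_k(P)$, or more cleanly, invoke the retraction $E_i\circ\cdots\circ E_{k-1}$ of Proposition~\ref{filtalg}(5) to push the commutation condition down to $F_i(P)$'s bottom layer — so that commuting with the bottom layer $P_i$ suffices.

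The last sentence, ``$M^\lambda_k$ is a factor,'' then follows by taking $i=k$: by the just-proved statement $(M^\lambda_k)'\cap M^\lambda_k = P_{k,k}$, and $P_{k,k}$ is the image of $EL(k)_k^k$, which is the tangle that caps off all $2k$ strands and reopens them — its range is one-dimensional (it is $\mathbb{C}1_k$, using connectedness $\dim P_{0_\pm}=1$ of the subfactor planar algebra). Hence the centre of $M^\lambda_k$ is trivial, so $M^\lambda_k$ is a factor; combined with the finite normal trace $t^\lambda_k$ from Lemma~\ref{single}(3) and the infinite-dimensionality (which will be clear once $\delta>1$), it is a $II_1$-factor.

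I expect the main obstacle to be the middle step: rigorously translating ``$x\in P_k$ commutes with the image of $F_i(P)$ under the iterated inclusion'' into the tangle identity ``$x$ is fixed by $EL(i)_k^k$,'' because one must chase the inclusion and multiplication tangles through several cabling conventions and verify that checking commutation against the bottom layer $P_i$ (rather than all of $F_i(P)$) loses no information — this is where the bimodule-retraction property $E_i$ and a careful bookkeeping of strand counts do the real work. Everything else is either the hypothesis, a citation to Proposition~\ref{comppair}, or the connectedness of $P$.
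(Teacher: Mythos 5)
Your outer frame is fine: by Proposition \ref{comppair}, $(M^\lambda_i)'\cap M^\lambda_k=(\lambda_k(F_i(P)))'\cap M^\lambda_k\subseteq (M^\lambda_0)'\cap M^\lambda_k=P_k$, so the whole issue is to decide which $x\in P_k$ commute with the image of $F_i(P)$ in $F_k(P)$; and your factoriality argument at the end (take $i=k$ and use $\dim P_{0_\pm}=1$ to get $P_{k,k}=\mathbb{C}1_k$) is correct. The gap is in your key middle step. The image of the bottom layer $P_i\subseteq F_i(P)$ under the iterated inclusion tangle is just $P_i$ included unitally in the bottom layer $P_k$ (in particular $1_i\mapsto 1_k$, not a Jones-type element), and since $\#$ restricted to $P_k\otimes P_k$ is the ordinary product of $P_k$, "commuting with the bottom layer" means commuting, inside the finite-dimensional $C^*$-algebra $P_k$, with the usual inclusion $P_i\hookrightarrow P_k$. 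Your claim that this is equivalent to $x\in \mathrm{im}\, Z_{EL(i)^k_k}=P_{i,k}$ is false: in subfactor language the left-hand condition is "$x$ commutes with $N'\cap M_i$", while $P_{i,k}$ is $M_i'\cap M_k$, and the former is strictly weaker in general. Concretely, for any irreducible example such as Temperley--Lieb with $\delta\geq 2$, $i=1<k$, one has $P_1=\mathbb{C}1$, so every $x\in P_k$ commutes with the image of $P_1$, whereas $P_{1,k}$ is a proper subalgebra of $P_k$ (its dimension is that of $P_{k-1}$). Neither of your proposed repairs closes this: since $(a\#b)_t$ for $a,b\in P_i\subseteq F_i(P)$ has only the $t=i$ component, the bottom layer is a subalgebra, so the higher components $P_n$, $n>i$, are certainly not generated by $P_i$ together with bottom-layer elements (again Temperley--Lieb, where $P_1=\mathbb{C}$); and the retraction $E_i\circ\cdots\circ E_{k-1}$ gives no way to upgrade "commutes with $P_i$" to "commutes with all of $F_i(P)$" --- that implication goes in the wrong direction, and the counterexample shows it cannot hold.

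The paper's proof uses exactly the ingredient your reduction throws away: a higher graded component of $F_i(P)$. It writes down one explicit element of $P_{k+i}\subseteq F_i(P)$ (Figure \ref{eltofpkpli}), whose image under the inclusion into $F_k(P)$ lies in $P_{2k}$, and notes that the commutation $[x,\cdot]=0$ in $F_k(P)$ against this single element is a tangle identity in $P_{2k}$; applying the conditional expectation $P_{2k}\to P_k$ to that identity yields $\delta^i x=Z_{EL(i)^k_k}(x)$, i.e.\ $x\in P_{i,k}$. So commutation with a degree-$(k+i)$ element, not with the bottom layer, is what pins down $P_{i,k}$, and by the irreducible example some such use of higher components is unavoidable. (The converse inclusion --- that $P_{i,k}$ commutes with all of $F_i(P)$ and hence with $M^\lambda_i=\lambda_k(F_i(P))''$ --- is the direction that really is a routine pictorial check, as you and the paper both assert.) Also note a small internal inconsistency in your sketch: the tangle that caps and reopens the $2(k-i)$ strands added by the inclusion is not $EL(i)^k_k$, which caps $2i$ points; the former would project onto the image of $P_i$, not onto $P_{i,k}$.
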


\begin{proof} 
A straightforward pictorial argument shows that $P_{i,k} \subseteq
P_k \subseteq F_k(P)$ commutes with (the image in $F_k(P)$ of) $F_i(P)$ and hence also with (the image in $M^\lambda_k$ of) $M^\lambda_i$ (which is its double commutant by Proposition \ref{comppair}); 
it now suffices to see than an element of $P_k \subseteq F_k(P)$
that commutes with (the image in $F_k(P)$ of) $F_i(P)$ is necessarily
in $P_{i,k}$. Take such an element, say $x \in P_k$ and
consider the element of $P_{k+i} \subseteq F_i(P)$ shown
on the left in Figure \ref{eltofpkpli} which is seen to map
(under the inclusion map) to the element of $P_{2k} \subseteq
F_k(P)$ shown on the right.

\begin{figure}[!htb]
\psfrag{A}{$\mapsto$}
\psfrag{i}{\tiny $i$}
\psfrag{k-i}{\tiny $k\!-\!i$}
\includegraphics[height=1.5cm]{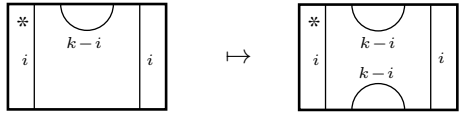}
\caption{An element of $P_{k+i} \subseteq F_i(P)$ and its image in 
$P_{2k} \subseteq
F_k(P)$}
\label{eltofpkpli}
\end{figure}
The condition that this element commutes with $x$ in $F_k(P)$ is easily
seen to translate to the tangle equation:
\begin{figure}[!htb]
\psfrag{x}{$x$}
\psfrag{i}{\tiny $i$}
\psfrag{k-i}{\tiny $k\!-\!i$}
\includegraphics[height=2.5cm]{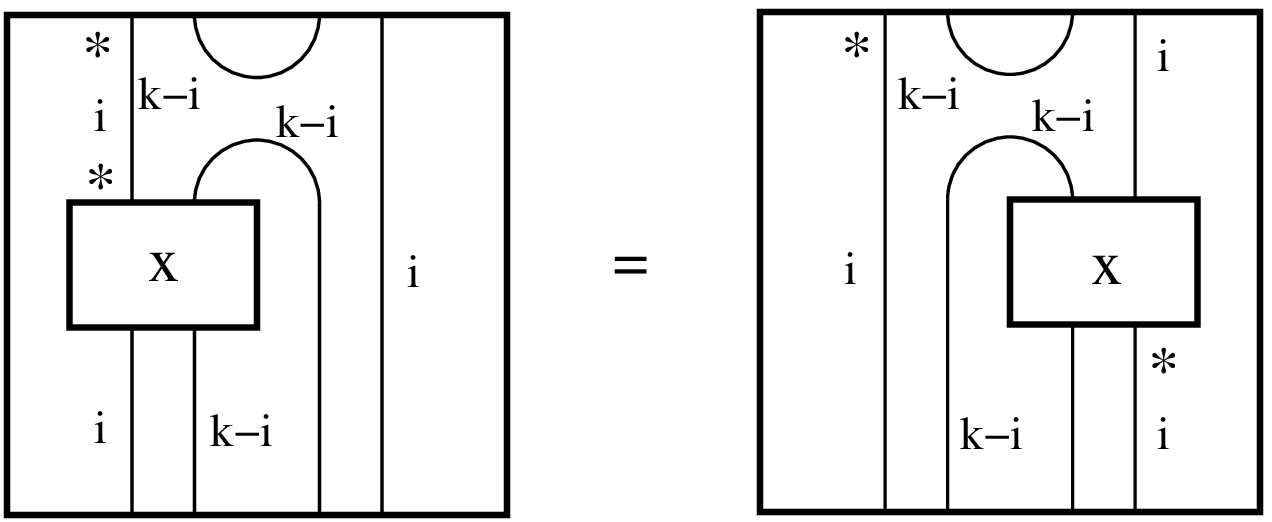}
\label{eqnforx}
\end{figure}

\noindent
which holds in $P_{2k}$. But now, taking the conditional expectation
of both sides into $P_k$ shows that $\delta^i x = Z_{EL(i)^k_k}(x)$, 
whence $x \in P_{i,k}$.
\end{proof}

Verification of the hypothesis of Lemma \ref{moregen} is computationally
involved and is the main result of this section which we state
as a proposition. The bulk of the work in proving this proposition
is contained in establishing Propositions \ref{ccomm} and \ref{dcomm}. In the course of proving this proposition, the
following notation and fact will be used. For $a \in F_k(P)$, define $[a] = \{\xi  \in H_k :
\lambda_k(a)(\xi) = \rho_k(a)(\xi)\}$, which is a closed
subspace of $H_k$. Now
observe that since $\Omega$ is separating for $M^\lambda_k$, for $x \in M^\lambda_k$, the operator
equation $ax = xa$ is equivalent to the condition $x\Omega \in [a]$.

\begin{proposition}\label{relcomm}
$(M^\lambda_0)' \cap M^\lambda_k = P_k \subseteq F_k(P) \subseteq
M^\lambda_k$.
\end{proposition}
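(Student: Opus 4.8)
The inclusion $P_k \subseteq (M^\lambda_0)' \cap M^\lambda_k$ is the easy direction: a pictorial computation shows that every $x \in P_k \subseteq F_k(P)$ $\#$-commutes with the image of $F_0(P)$ in $F_k(P)$, hence (taking double commutants via Proposition \ref{comppair}) commutes with $M^\lambda_0 \subseteq M^\lambda_k$; in fact this is a special case ($i=0$) of the first half of the argument in Lemma \ref{moregen}. So the real content is the reverse inclusion $(M^\lambda_0)' \cap M^\lambda_k \subseteq P_k$. By the observation recorded just before the statement, an element $x \in M^\lambda_k$ lies in $(M^\lambda_0)'$ iff the vector $\xi = x\Omega \in H_k$ satisfies $\lambda_k(a)\xi = \rho_k(a)\xi$ for every $a \in F_0(P) \subseteq F_k(P)$, i.e. iff $\xi \in \bigcap_{a \in F_0(P)} [a]$. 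Since $\Omega$ is cyclic and separating for $M^\lambda_k$, the proposition reduces to the purely Hilbert-space assertion
\[
\Bigl(\bigcap_{a \in F_0(P)} [a]\Bigr) \cap \overline{M^\lambda_k \Omega} \;=\; P_k\Omega,
\]
and since $M^\lambda_k\Omega$ is dense in $H_k$ this is really the claim that $\bigcap_{a} [a] = P_k \subseteq H_k$ (viewing $P_k$ as the $n=k$ summand of $H_k = \oplus_{n \geq k} P_n$), together with the fact that every vector of $P_k$ is of the form $x\Omega$ with $x \in M^\lambda_k$ — which is immediate since $P_k \subseteq F_k(P)$ acts on itself by $\lambda_k$.

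\textbf{Key steps, in order.} First I would reduce, as above, to computing the subspace $V := \bigcap_{a \in F_0(P)} [a] \subseteq H_k$. It clearly contains $P_k$ (same picture as the easy inclusion), so the work is to show $V \subseteq P_k$. Second, I would cut down the family of $a$'s: it suffices to intersect the conditions $[a]$ over $a$ ranging in a generating set of $F_0(P)$ (as a $*$-algebra), and in fact over $a \in P_0, P_1, P_2, \dots$ separately, since $F_0(P) = \oplus_{p \geq 0} P_p$. Third — and this is where Propositions \ref{ccomm} and \ref{dcomm} (flagged in the text as carrying ``the bulk of the work'') must be doing their job — I would take a general $\xi = (x_k, x_{k+1}, x_{k+2}, \dots) \in H_k$, write out the condition $\lambda_k(a)\xi = \rho_k(a)\xi$ as a family of tangle equations relating the components $x_n$, one such equation in each colour $P_t$, by expanding $a\#\xi$ and $\xi\#a$ using Figure \ref{sharpdef}. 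The plan is to choose $a$ cleverly (Jones-projection-like and inclusion-like elements of the low $P_p$'s, presumably the elements that Propositions \ref{ccomm}, \ref{dcomm} are tailored to) so that these equations force $x_n = 0$ for all $n > k$, leaving $\xi = x_k \in P_k$; and the residual condition for $a \in P_0 = \mathbb{C}$ is vacuous, so no further constraint on $x_k$ survives. Fourth, I would assemble: $V = P_k$, hence by the reduction $(M^\lambda_0)' \cap M^\lambda_k = \{x \in M^\lambda_k : x\Omega \in P_k\} = P_k$, using separating-ness of $\Omega$ for uniqueness of $x$ given $x\Omega$.

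\textbf{Main obstacle.} The hard part is precisely the third step: producing, for each $n > k$, an element $a = a_{p}$ (or a finite combination) in some $P_p \subseteq F_0(P)$ whose commutation condition $a\#\xi = \xi\#a$ isolates and kills the component $x_n$, uniformly in $n$. This is a genuinely intricate planar-algebra computation — it is what the skipped Propositions \ref{ccomm} and \ref{dcomm} are for — involving carefully tracking how the cabling numbers $m+t-n-k$, $n+t-m-k$, $m+n-k-t$ in Figure \ref{sharpdef} interact, and exploiting the non-degeneracy of the trace $\tau$ on each $P_n$ (equivalently faithfulness of the inner product on $H_k$) to pass from ``the tangle applied to $x_n$ vanishes against everything'' to ``$x_n = 0$''. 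Everything else — the reduction to a Hilbert-space statement, the cyclic/separating bookkeeping, the easy inclusion — is routine.
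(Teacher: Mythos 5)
Your reduction matches the paper's opening moves and is fine: the inclusion $P_k \subseteq (M^\lambda_0)' \cap M^\lambda_k$ via a pictorial commutation argument plus the double-commutant identification of Proposition \ref{comppair}, and the translation of the reverse inclusion, via the separating vector $\Omega$, into the statement that a vector $\xi = x\Omega$ satisfying the commutation conditions must lie in $P_k \subseteq H_k$. But everything after that is a placeholder rather than an argument: you never identify which elements of $F_0(P)$ you test against, and your own ``main obstacle'' paragraph concedes that the step forcing $x_n = 0$ for all $n > k$ is precisely what you have skipped, ``presumably'' to be handled by propositions whose statements you do not know. Since that step is the entire content of the proposition, this is a genuine gap. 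Concretely, what the paper supplies and you do not is: (i) the observation that it suffices to commute with just two explicit elements, a single-cup element $c \in P_1$ and a double-cup element $d \in P_2$ (Figure \ref{eltc}); (ii) the identification $[c] = C_k$ (Proposition \ref{ccomm}), where $C_k$ is spanned by the images of the cup-adding annular tangles $X^n_k$ --- proved via the inversion formula of Lemma \ref{ccommlem}, the recursion of Corollary \ref{xnxm} expressing $x_n$ in terms of $x_m$ for $m-n$ even, the norm estimate of Lemma \ref{easy}, and the fact that $\|x_m\|_{H_k} \rightarrow 0$; and (iii) the identification $C_k \cap [d] = P_k$ (Proposition \ref{dcomm}), where commutation with $d$ yields the relations $y^{t-1}_k + y^{t-2}_k = 0$ and square-summability in $H_k$, with $\delta > 1$, kills all components above level $k$.

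Note also that the mechanism you guess at --- ``for each $n>k$, an element $a_p$ whose commutation condition isolates and kills the component $x_n$'' --- is not how the argument can run: no single commutator equation annihilates an individual component, because the commutation conditions only couple components of neighbouring degrees. What actually happens is that the two fixed elements $c$ and $d$ produce recursions linking all the components at once, and it is the finiteness of the $H_k$-norm (the weights $\delta^{n-k}$ with $\delta > 1$) that forces the tail to vanish; the hypothesis $\delta > 1$ enters essentially here and is invisible in your sketch. So while your frame is the right one, the proof as proposed does not exist without supplying the content of Propositions \ref{ccomm} and \ref{dcomm}.
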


\begin{proof}
As in the proof of Lemma \ref{moregen}, an easy pictorial calculation shows that $P_k \subseteq F_k(P)$
certainly commutes with all elements of (the image in $F_k(P)$ of)
$F_0(P)$ and therefore also with (the image in $M^\lambda_k$ of) $M^\lambda_0$ (which is its double commutant by Proposition \ref{comppair}).

To verify the other containment, we will show that any element
of $M^\lambda_k$ that commutes with the specific elements $c,d \in F_0(P)$
shown in Figure \ref{eltc}
\begin{figure}[!htb]
\psfrag{=}{$\mapsto$}
\psfrag{k}{\tiny $k$}
\psfrag{A}{$F_0(P) \supseteq P_1 \ni c=$}
\psfrag{B}{$\in P_{k+1} \subseteq F_k(P)$}
\psfrag{C}{$F_0(P) \supseteq P_2 \ni d=$}
\psfrag{D}{$\in P_{k+2} \subseteq F_k(P)$}
\includegraphics[height=2.2cm]{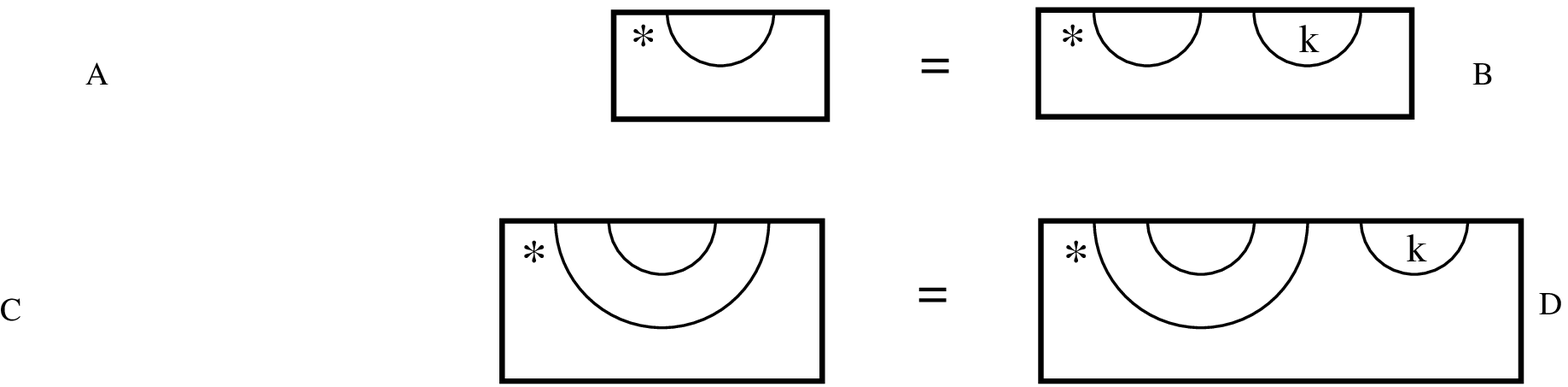}
\caption{The elements $c,d \in F_0(P)$ and their images in $F_k(P)$}
\label{eltc}
\end{figure}
\noindent
is necessarily in $P_k \subseteq F_k(P)$. We remark that the elements $c$ and $d$ also play a prominent role in the proof of
\cite{GnnJnsShl}.

Suppose now that $x \in M^\lambda_k$ commutes with both $c,d \in
F_k(P)$,
or equivalently that $x\Omega = (x_k,x_{k+1},\cdots) \in [c] \cap [d]$. It then follows from
Propositions \ref{ccomm} and \ref{dcomm} that $x\Omega = x_k\Omega$
or that $x = x_k \in P_k$.
\end{proof}

Proposition \ref{ccomm} identifies $[c]$ as a certain explicitly defined (closed) subspace of
$C_k \subseteq H_k$. Here
$C_k = \oplus_{n=k}^\infty C^n_k$ where $C^n_k \subseteq P_n$
is defined as the range of $Z_{X^n_k}$ where $X^n_k$, defined for $n \geq k$, is the annular tangle in Figure \ref{eltqn}, which has $n-k$ cups. We will have occasion
to use the following observation.
\begin{figure}[!htb]
\psfrag{A}{$\cdots$}
\psfrag{2m}{\tiny $2k$}
\includegraphics[height=1.5cm]{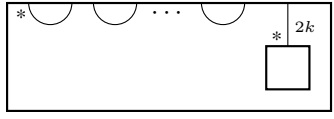}
\caption{The tangle $X^n_k$}
\label{eltqn}
\end{figure}

\begin{remark}\label{qnkperp}
We shall write $(C^n_k)^\perp$ to denote the orthogonal complement of
$C^n_k$ in $P_n$. It is a consequence of the faithfulness of $\tau$ that
$x \in C^n_k$ exactly when it satisfies the capping condition of Figure \ref{kqnkk}.
\begin{figure}[!htb]
\psfrag{A}{$\cdots$}
\psfrag{2m}{\tiny $2k$}
\psfrag{n-k}{\tiny $n\!-\!k$}
\psfrag{x}{$x$}
\psfrag{b}{$=\ 0$}
\includegraphics[height=1.5cm]{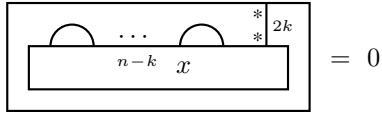}
\caption{The capping condition}
\label{kqnkk}
\end{figure}
\end{remark}

\begin{proposition}\label{ccomm} $[c] = C_k.$
\end{proposition}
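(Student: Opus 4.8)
The plan is to prove the two inclusions $C_k \subseteq [c]$ and $[c] \subseteq C_k$ by translating membership in $[c]$ into tangle equations and using the capping characterization of $C^n_k$ from Remark \ref{qnkperp}. First I would unwind the definitions: the element $c \in P_1 \subseteq F_0(P)$, pushed forward into $P_{k+1} \subseteq F_k(P)$ via the inclusion tangle, is the element pictured on the left of Figure \ref{eltc}; for $\xi = (x_k, x_{k+1}, \ldots) \in H_k$, the condition $\lambda_k(c)(\xi) = \rho_k(c)(\xi)$ becomes, component by component in $P_n$, an identity between two tangles obtained by composing $c$ with $x_n$ and with $x_{n\pm 1}$ (via the definition of $\#$ in Figure \ref{sharpdef}). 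Writing out $c\#x_n$ and $x_n\#c$ explicitly, one sees that each has components in $P_{n-1}, P_n$ and $P_{n+1}$; collecting terms of a fixed colour gives a recursion relating $x_{n-1}, x_n, x_{n+1}$.

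The key step is to recognize that this recursion is exactly the statement that the sequence $(x_n)$ is built up from $x_k$ by the cup-adding tangles, i.e. that $\xi \in C_k$. Concretely, I expect that the $P_{n+1}$-component of the equation $c\#x_n - x_n\#c$ plus the $P_{n+1}$-component of $c\#x_{n+1} - x_{n+1}\#c$ forces a relation of the form ``$x_{n+1}$ agrees, after capping off the rightmost two strands, with a multiple of $x_n$ with a cup adjoined'', which is precisely membership of $x_{n+1}$ in $C^{n+1}_k$ together with compatibility with $x_n$; conversely, any element of $C_k$ satisfies these relations because cups slide past the small annular tangle defining $c$. For the $\subseteq$ direction it is cleanest to argue: if $\xi \in [c]$, then capping the relation appropriately and invoking faithfulness of $\tau$ (as in Remark \ref{qnkperp}) shows each $x_n \in C^n_k$, and moreover the ``off-diagonal'' part of the relation pins down $x_{n+1}$ in terms of $x_n$, so $\xi$ lies in the specific closed subspace of $C_k$ claimed. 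For the $\supseteq$ direction, one checks directly by isotopy that $c$ commutes with every element of the form $Z_{X^n_k}(y)$, which is a short pictorial computation since the cups of $X^n_k$ can be absorbed.

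The main obstacle I anticipate is purely bookkeeping: $c$ contributes terms shifting colour by $\pm 1$, so the equation $\lambda_k(c)(\xi) = \rho_k(c)(\xi)$ is not a single-colour identity but couples three consecutive components $x_{n-1}, x_n, x_{n+1}$, and one must carefully separate the ``diagonal'' content (each $x_n \in C^n_k$) from the ``coherence'' content (consecutive $x_n$'s fit together). Getting the scalar factors of $\delta$ right — these come from closed loops created when cups meet caps, and from the $\delta^{-1}$ normalizations in the definitions of $E_{k-1}$ and of $\tau$ — will require care. A secondary subtlety is that $[c]$ is defined as a subspace of all of $H_k$, so one must also verify the containment $C_k \subseteq H_k$ is compatible, i.e. that the closed subspace of $C_k$ one identifies is genuinely all of $[c]$ and not just a dense part; this follows since $[c]$ is closed by definition and the pictorial relations are continuous in the $H_k$-norm, so it suffices to verify them on the dense subspace $F_k(P) \cap C_k$. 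Once the recursion is correctly set up, everything reduces to the faithfulness of $\tau$ and elementary isotopy of tangles.
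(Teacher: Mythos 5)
Your easy direction ($C_k \subseteq [c]$: cups slide past $c$, then use that $[c]$ is closed) is exactly the paper's, and is fine. The gap is in the hard inclusion $[c] \subseteq C_k$, where your plan is to ``cap the relation appropriately and invoke faithfulness of $\tau$'' to conclude that each $x_n \in C^n_k$. This local step is not substantiated and will not work as described: the degree-$(n+1)$ component of $\lambda_k(c)\xi - \rho_k(c)\xi$ mixes $x_n$, $x_{n+1}$ and $x_{n+2}$, and its vanishing does not, after capping, force the part of each individual $x_n$ orthogonal to $C^n_k$ to vanish; it only yields relations that propagate that orthogonal part upward in degree. Note also a misreading of the target: $C_k = \oplus_n C^n_k$ imposes no ``coherence'' between consecutive components (each $C^n_k$ separately lies in $[c]$), so there is no ``off-diagonal part pinning down $x_{n+1}$ in terms of $x_n$'' to be extracted; what must be shown is that the component of $\xi$ in $C_k^\perp$ is zero. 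Crucially, that cannot follow from purely local tangle identities plus faithfulness of $\tau$: the recursion admits formal solutions supported in $C_k^\perp$ in every degree, which are excluded only because they fail to be square-summable --- exactly the phenomenon in Proposition \ref{dcomm}, where the algebra gives $y_{k+1} = -y_{k+2} = y_{k+3} = \cdots$ and only finiteness of $\|\xi\|_{H_k}$ (with $\delta > 1$) kills $y$. Your continuity remark at the end addresses only the easy direction; an arbitrary element of the closed subspace $[c]$ need not be approximable by finitely supported elements of $[c]$, so you cannot reduce the hard direction to the algebraic part of $H_k$.

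The paper's route supplies precisely the missing quantitative mechanism. One decomposes $\xi \in [c]$ against $C_k \oplus C_k^\perp$ and, since $C_k \subseteq [c]$, reduces to showing $C_k^\perp \cap [c] = \{0\}$. On $(C^n_k)^\perp$ the map $x \mapsto (c\#x - x\#c)_{n+1}$ is inverted explicitly by a telescoping sum of annular tangles $T(k,A,B)$ with coefficients $\delta^{-t}$ (Lemma \ref{ccommlem}); iterating via the composition formula for these tangles expresses $x_n$ in terms of $x_m$ for arbitrarily large $m$ of the right parity (Corollary \ref{xnxm}); and the operator-norm bound on the annular maps (Lemma \ref{easy}) gives $\|x_n\|_{H_k} \leq 2(n-k)\|x_m\|_{H_k}$ with a constant independent of $m$, so $\|x_m\| \to 0$ forces $x_n = 0$. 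Some argument of this asymptotic kind --- propagating the orthogonal part to infinity with uniformly controlled constants and using membership in $H_k$ --- is indispensable, and it is exactly what your sketch omits.
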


We will take up the proof of Proposition \ref{ccomm} after that of Proposition \ref{dcomm}.

\begin{proposition}\label{dcomm} $C_k \cap [d] = P_k \subseteq H_k.$
\end{proposition}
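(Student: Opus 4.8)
The plan is to prove the two inclusions $P_k \subseteq C_k \cap [d]$ and $C_k \cap [d] \subseteq P_k$ separately. For the first, note that $P_k \subseteq H_k$ consists precisely of the elements with $x_n = 0$ for $n > k$; such an element clearly lies in $C_k$ (since $C^k_k = P_k$, as $X^k_k$ is the identity tangle) and a routine pictorial check shows that any $x \in P_k$, viewed in $F_k(P)$, commutes with the image of $d$ (indeed $P_k$ commutes with all of $F_0(P)$, as already observed in the proof of Proposition~\ref{relcomm}), so $x\Omega \in [d]$. Hence $P_k \subseteq C_k \cap [d]$.

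The substance is the reverse inclusion. Take $\xi = (x_k, x_{k+1}, \dots) \in C_k \cap [d]$; I must show $x_n = 0$ for all $n > k$. The strategy is to extract from the condition $\xi \in [d]$, i.e. $\lambda_k(d)(\xi) = \rho_k(d)(\xi)$, a recursion relating the component $x_{n+1}$ (or perhaps $x_n$) to lower components, and then feed in the capping condition of Remark~\ref{qnkperp} coming from $\xi \in C_k$ to force the top components to vanish one at a time. Concretely, since $d \in P_2 \subseteq F_0(P)$ has image in $P_{k+2} \subseteq F_k(P)$ as drawn in Figure~\ref{eltc}, the equation $d \# \xi = \xi \# d$ decomposes componentwise; the $P_t$-component of $d\#\xi$ receives contributions from $d \# x_n$ for $n$ in a range around $t$, and similarly on the right. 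I would isolate the \emph{highest} component: if $x_N \neq 0$ with $N$ maximal, then looking at the $P_{N+2}$-component of $d\#\xi = \xi\#d$ gives a tangle equation in which only $x_N$ appears on each side (the would-be contributions from $x_{N+1}, x_{N+2},\dots$ vanish by maximality). This equation, combined with the capping condition that $x_N \in C^N_k$ satisfies, should be manipulated pictorially — capping off appropriately and using sphericality and the faithfulness/positivity of $\tau$ — to conclude $\|x_N\|_{H_k}^2 = 0$ whenever $N > k$, a contradiction. The key point is that the element $d$ is chosen precisely so that the "braiding past $d$" encoded in $\lambda_k(d) = \rho_k(d)$ is incompatible with the cup structure defining $C^N_k$ unless there are no cups, i.e. $N = k$.

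I expect the main obstacle to be the bookkeeping in the componentwise expansion of $d\#\xi = \xi\#d$: one must correctly identify, among the $O(1)$-many tangle terms contributing to each $P_t$-component, exactly which survive and match them up on the two sides, and then recognize the surviving relation as something that, after capping, becomes the statement that a manifestly positive element (a product $y^* y$ or the image of $x_N^* x_N$ under a positive expectation map, as in Lemma~\ref{lem:estimlem} and its proof) has trace zero. A secondary subtlety is that $\xi$ is an $L^2$-vector, not a priori a finite sum, so strictly the argument should first be run for $\xi$ in the dense subspace $F_k(P)$ and the conclusion $\xi \in P_k$ then promoted to all of $H_k$ by a limiting/closedness argument — but in fact $[d]$ and $C_k$ are both closed and $P_k$ is closed in $H_k$, and the recursion is "finitely supported" in the sense that it only ever involves finitely many adjacent components, so this passage is harmless; I would phrase the vanishing argument directly in terms of the (possibly infinite) sequence, using that $\sum_n \delta^{n-k}\tau(x_n^* x_n) < \infty$ guarantees the components are genuine elements of the $P_n$ to which the capping and positivity arguments apply.
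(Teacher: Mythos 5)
Your easy inclusion is fine, but your argument for $C_k \cap [d] \subseteq P_k$ has a genuine gap, and it sits exactly where the content of the proposition lies. The vanishing argument you sketch hinges on choosing $N$ maximal with $x_N \neq 0$, which only makes sense for finitely supported $\xi$, and the proposed reduction to that case is not "harmless": knowing $C_k\cap[d]\cap F_k(P)=P_k$ would not yield $C_k\cap[d]=P_k$, because an element of the closed subspace $C_k\cap[d]$ need not be a limit of elements of $C_k\cap[d]\cap F_k(P)$ (closedness of $[d]$, $C_k$ and $P_k$ is beside the point). More seriously, no purely component-by-component use of the commutation relation can succeed, because that relation alone does not kill the components. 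What membership in $C_k$ actually gives you is the parametrization $x_n = Z_{X^n_k}(y^n_k)$ with $y^n_k\in P_k$ (not the capping-to-zero condition of Remark \ref{qnkperp}, which characterizes $(C^n_k)^\perp$ and is the ingredient for Proposition \ref{ccomm}, not for this one). Feeding this into $\lambda_k(d)\xi=\rho_k(d)\xi$ and capping, using $\delta>1$, yields the recursion $y^{t-1}_k+y^{t-2}_k=0$ for $t\geq k+3$, whose solutions are the alternating sequences $y^{k+1}_k=y,\ y^{k+2}_k=-y,\ y^{k+3}_k=y,\dots$ for an arbitrary $y\in P_k$. So commutation with $d$ is, at this formal level, perfectly consistent with all components being nonzero; what finally forces $y=0$ is the $L^2$ condition
\begin{equation*}
||\xi||^2_{H_k} \;=\; \tau(x_k^*x_k) \,+\, \tau(y^*y)\,(\delta+\delta^2+\cdots) \;<\; \infty ,
\end{equation*}
together with $\delta>1$. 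Your proposal never invokes this summability step, and without it there is no proof; this is the missing idea.

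For what it is worth, your top-component computation would go through for finitely supported $\xi$: with $N$ maximal, the $P_{N+2}$-component of the commutator involves only $x_N$, and the resulting equation $Z_{Y^{N+2}_k}(y^N_k)=Z_{Z^{N+2}_k}(y^N_k)$ caps down to $\delta\, y^N_k=\delta^3\, y^N_k$, whence $y^N_k=0$ (note that the mechanism is the discrepancy in powers of $\delta$, not positivity or faithfulness of $\tau$ as you anticipated). But, as explained above, this special case cannot be parlayed into the general statement, whereas the paper's route --- solve the recursion completely and then use finiteness of the $H_k$-norm --- handles arbitrary $\xi\in H_k$ directly.
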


\begin{proof}
Suppose that $\xi = (x_k,x_{k+1},\cdots) \in C_k$, so that for $n \geq k$,  there exist $y^n_k \in P_k$ such that  $x_n = Z_{X^n_k}(y^n_k)$.
We then compute
\begin{eqnarray*}
\lambda_k(d)(\xi) - \rho_k(d)(\xi) &=& \sum_{n=k}^\infty (d \# x_n -x_n \# d)\\
&=& \sum_{n=k}^\infty \sum_{t=|n-(k+2)|+k}^{n+2} (d \# x_n -x_n \# d)_t\\
&=& \sum_{t=k}^\infty \sum_{n=|t-(k+2)|+k}^{t+2} [d, Z_{X^n_k}(y^n_k)]_t
\end{eqnarray*}

Notice that if $t > k+2$, then the $P_t$ component
of $\lambda_k(d)(\xi) - \rho_k(d)(\xi)$
is given by
$$
\sum_{n=t-2}^{t-1} [d, Z_{X^n_k}(y^n_k)]_t
$$
since the other 3 commutators vanish, and is consequently given by
  $Z_{Y^t_k}(y^{t-1}_k + y^{t-2}_k) - Z_{Z^t_k}(y^{t-1}_k + y^{t-2}_k)$
\noindent
 where $Y^t_k$ and $Z^t_k$, defined for $t \geq k+2$, are the tangles in Figure \ref{eltqn2},
each having a double cup and $t-k-2$ single cups.
\begin{figure}[!htb]
\psfrag{A}{$\cdots$}
\psfrag{2m}{\tiny $2k$}
\psfrag{B}{$Y^t_k=$}
\psfrag{C}{$Z^t_k=$}
\includegraphics[height=1.5cm]{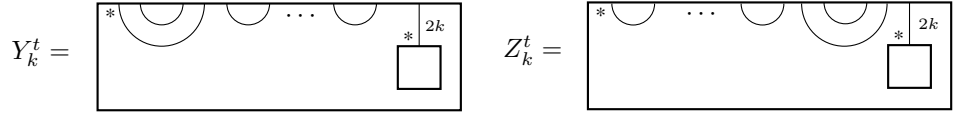}
\caption{The tangles $Y^t_k$ and $Z^t_k$}
\label{eltqn2}
\end{figure}
Thus if $\xi \in [d]$, then for $t > k+2$, $Z_{Y^t_k}(y^{t-1}_k + y^{t-2}_k) = Z_{Z^t_k}(y^{t-1}_k + y^{t-2}_k)$.

Note now that if there are at least 2 single cups, i.e., if $t \geq k+4$,
then, capping off the points $\{4,5\}$ of $Y^t_k$ and $Z^t_k$
gives $Y^{t-1}_k$ and $Z^{t-1}_k$. It follows by induction that
for $t \geq k+3$, $Z_{Y^{k+3}_k}(y^{t-1}_k + y^{t-2}_k) = Z_{Z^{k+3}_k}(y^{t-1}_k + y^{t-2}_k)$. But now, capping off $\{1,2\}$, $\{3,6\}$ and $\{4,5\}$
of $Y^{k+3}_k$ and $Z^{k+3}_k$ gives $\delta$ times the identity
tangle $I^k_k$ for $Y^{k+3}_k$ and $\delta^3$ times $I^k_k$
for $Z^{k+3}_k$ for $Z^{k+3}_k$. Since $\delta >1$, 
it follows that
for $t \geq k+3$, $y^{t-1}_k + y^{t-2}_k = 0$. Setting $y_{k+1} = y$,
we have $y_{k+3} = y_{k+5} = \cdots = y = -y_{k+2} = -y_{k+4} = \cdots$.

Finally, since $x_n = Z_{X^n_k}(y^n_k)$, we see that $\tau(x_n^*x_n) = \tau((y^n_k)^*y^n_k) = \tau(y^*y)$ for $n \geq k+1$. Hence
$||\xi||^2_{H_k} = \tau(x_k^*x_k) + \tau(y^*y) ( \delta +
\delta^2 + \cdots )$. Since this is to be finite (and $\delta>1$) , it follows that $y = 0$
or equivalently that $x_n = 0$ for $n \geq k+1$,
and hence $\xi \in P_k \subseteq H_k$.
\end{proof}

The proof of Proposition \ref{ccomm} involves analysis of linear
equations involving a certain class of annular tangles that we
will now define. Given $m,n \geq k \geq 0$ and subsets $A \subseteq
\{1,2,\cdots,m-k\}$ and $B \subseteq \{1,2,\cdots,n-k\}$ of equal
cardinality we define an annular tangle $T(k,A,B)^m_n$ by the following three requirements:  (i) there is a strictly increasing bijection
$f_{AB} : A \rightarrow B$ such that for $\alpha \in A$, the marked points on the external box that are
labelled by $2\alpha-1$ and $2\alpha$ are joined to the points on the internal box labelled by $2f_{AB}(\alpha)-1$ and $2f_{AB}(\alpha)$, (ii) the last $2k$ points on the external box are joined with the last $2k$
points on the internal box, and (iii) the rest of the marked points on the internal and external
boxes are capped off in pairs without nesting by joining each odd point
to the next even point.  
(In all cases of interest, the sets $A$ and $B$ will be intervals of positive integers; for
example we write $A = [3,7]$ to mean $A = \{3,4,5,6,7\}$.)
We illustrate with an example
of the tangle $T(1,[4,5],[3,4])^8_5$ in Figure \ref{segmented}.
\begin{figure}[!htb]
\includegraphics[height=3cm]{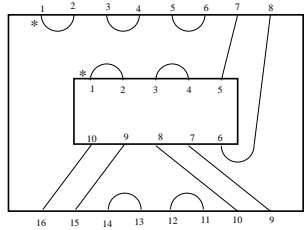}
\caption{The tangle $T(1,[4,5],[3,4])^8_5$}
\label{segmented}
\end{figure}

We leave it to the reader to verify that
the composition formula for this class of tangles is given by:
\begin{equation*}
Z_{T(k,A,B)^m_n} \circ Z_{T(k,C,D)^n_p} = \delta^{n - k - |B \cup C|} 
Z_{T(k,E,F)^m_p},
\end{equation*}
where $E = f_{AB}^{-1}(B \cap C)$ and $F = 
f_{CD}(B \cap C)$.

We now isolate a key lemma used in the proof of Proposition 
\ref{ccomm}.

\begin{lemma}\label{ccommlem}
For $n \geq k$, the map $(C^n_k)^\perp \ni x \mapsto z = (c\#x - x \# c)_{n+1} \in P_{n+1}$
is injective with inverse
given by 
$$
x = \sum_{t=1}^{n-k} \delta^{-t} Z_{T(k,[1,n+1-t-k],[t+1,n-k+1])^n_{n+1}}(z).
$$
\end{lemma}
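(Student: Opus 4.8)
The plan is to compute $z = (c\#x - x\#c)_{n+1}$ explicitly as a picture, recognize it as a sum of two terms coming from the two ways the extra strand of $c \in P_1$ can attach, and then apply the composition formula for the tangles $T(k,A,B)$ to the proposed formula for $x$ in terms of $z$ and check that everything telescopes. First I would draw $c\#x$ and $x\#c$ using Figure \ref{sharpdef} and the image of $c$ in $F_k(P)$ from Figure \ref{eltc}, keeping only the $P_{n+1}$ components. Since $c$ has one through-strand beyond the $k$ that get identified, each of $c\#x$ and $x\#c$ in colour $n+1$ is (up to the obvious cabling) the picture of $x$ with one extra cup attached on the appropriate side; concretely I expect $(c\#x)_{n+1} = Z_{T(k,[1,n-k],[2,n-k+1])^n_{n+1}}(x)$ and $(x\#c)_{n+1} = Z_{T(k,[1,n-k],[1,n-k])^n_{n+1}}(x)$ — i.e. the identity-like inclusion tangle — so that $z = Z_{S_1}(x) - Z_{S_0}(x)$ where $S_1$ shifts the window of attachment by one and $S_0$ is the straightforward inclusion. (I would double-check the shading and the exact index sets against the figures; the precise labels are the one genuinely fiddly bookkeeping point.)

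Next I would substitute $z = Z_{S_1}(x) - Z_{S_0}(x)$ into the claimed right-hand side $\sum_{t=1}^{n-k} \delta^{-t} Z_{T(k,[1,n+1-t-k],[t+1,n-k+1])^n_{n+1}}(z)$ and use the composition formula
$$Z_{T(k,A,B)^m_n}\circ Z_{T(k,C,D)^n_p} = \delta^{n-k-|B\cup C|}\, Z_{T(k,E,F)^m_p},\quad E = f_{AB}^{-1}(B\cap C),\ F = f_{CD}(B\cap C),$$
with $m=n$, $p=n+1$ throughout. Composing $T(k,[1,n+1-t-k],[t+1,n-k+1])^n_{n+1}$ with $S_1$ and with $S_0$ produces, for each $t$, two tangles of the same shape $T(k,\bullet,\bullet)^n_{n+1}$ with a $\delta$-power recording how many cups got absorbed; I expect the $S_1$-term at level $t$ to cancel the $S_0$-term at level $t-1$ (after accounting for the $\delta$-powers $\delta^{-t}$ and $\delta^{-(t-1)}$ and the $\delta^{\pm 1}$ from the composition formula), so that the whole sum telescopes down to the single surviving term at $t=1$ coming against $S_0$, which should be the identity on $P_n$ — except that the $t=1$, $S_1$-contribution and the $t = n-k$, $S_0$-contribution are precisely the ones that instead get killed by the capping condition of Remark \ref{qnkperp} characterizing $(C^n_k)^\perp$. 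That is where the hypothesis $x \in (C^n_k)^\perp$ enters: one of the boundary terms in the telescoping sum is exactly $Z_{X^n_k}$ applied to something, which vanishes on $(C^n_k)^\perp$ by faithfulness of $\tau$.

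Having shown the composite $x \mapsto z \mapsto (\text{RHS})$ is the identity on $(C^n_k)^\perp$, injectivity of $x \mapsto z$ on $(C^n_k)^\perp$ is immediate, and the displayed formula is then a genuine one-sided inverse on the image, which is all that is claimed. The main obstacle I anticipate is purely diagrammatic: getting the index sets $A, B$ and the shadings exactly right in the two pictures for $(c\#x)_{n+1}$ and $(x\#c)_{n+1}$, and then tracking the $\delta$-exponents through each application of the composition formula so that the telescoping is exact rather than off by a power of $\delta$. Once the two building-block identities $(c\#x)_{n+1} = Z_{S_1}(x)$ and $(x\#c)_{n+1} = Z_{S_0}(x)$ are pinned down correctly, the rest is a mechanical — if slightly tedious — application of the stated composition rule together with the capping characterization of $(C^n_k)^\perp$.
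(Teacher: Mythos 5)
Your proposal is correct and is essentially the paper's own argument: the paper likewise writes $z$ as the difference of two annular-tangle images of $x$ (Figure \ref{zis}), arranges the sum over $t$ to telescope, and uses the capping condition of Remark \ref{qnkperp} (i.e.\ $x \in (C^n_k)^\perp$) to kill the one boundary term that is not the identity, so only the bookkeeping you yourself flagged needs repair. In particular, the two tangles expressing $z$ in terms of $x$ should have external colour $n+1$ and internal colour $n$ (so the composition rule is used with middle colour $n+1$ and $p=n$), and when the labels are fixed one finds the identity term surviving at $t=1$ and the term killed by Remark \ref{qnkperp} occurring at $t=n-k$ --- your sentence attributing which boundary term survives and which is capped away is garbled, but the telescoping structure is exactly as you describe.
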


\begin{proof}
Pictorially, $z$ is given in terms of $x$ as in Figure \ref{zis}.
\begin{figure}[!htb]
\psfrag{$x_n$}{$x$}
\psfrag{z}{$z$}
\psfrag{=}{$=$}
\psfrag{z =}{$=$}
\psfrag{2k}{\tiny $2k$}
\psfrag{2(n-k)}{\tiny $2(n\!-\!k)$}
\psfrag{2(n+1-k)}{\tiny $2(n\!+\!1\!-\!k)$}
\includegraphics[height=1.5cm]{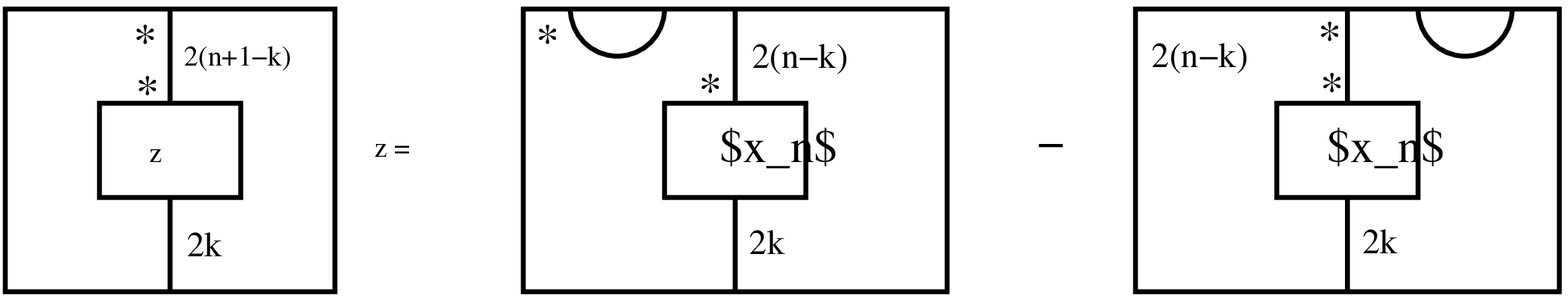}
\caption{Pictorial expression for $z$ in terms of $x$}
\label{zis}
\end{figure}
\noindent
It then follows by applying appropriate annular tangles that for any $t$ between $1$ and $n-k$, the pictorial equation of
Figure \ref{ztis} holds
\begin{figure}[!htb]
\psfrag{=}{$=$}
\psfrag{-}{$-$}
\psfrag{n-k-t}{\tiny $n-k-t$}
\psfrag{n-k-t+1}{\tiny $n-k-t+1$}
\psfrag{t}{\tiny $t$}
\psfrag{t-1}{\tiny $t-1$}
\psfrag{A}{$\cdots$}
\psfrag{$x_n$}{$x$}
\psfrag{$z$}{$z$}
\psfrag{2(n-k+1)}{\tiny $2(n\!-\!k\!+\!1)$}
\psfrag{2(n-k)}{\tiny $2(n\!-\!k
)$}
\psfrag{2k}{\tiny $2k$}
\includegraphics[height=3.15cm]{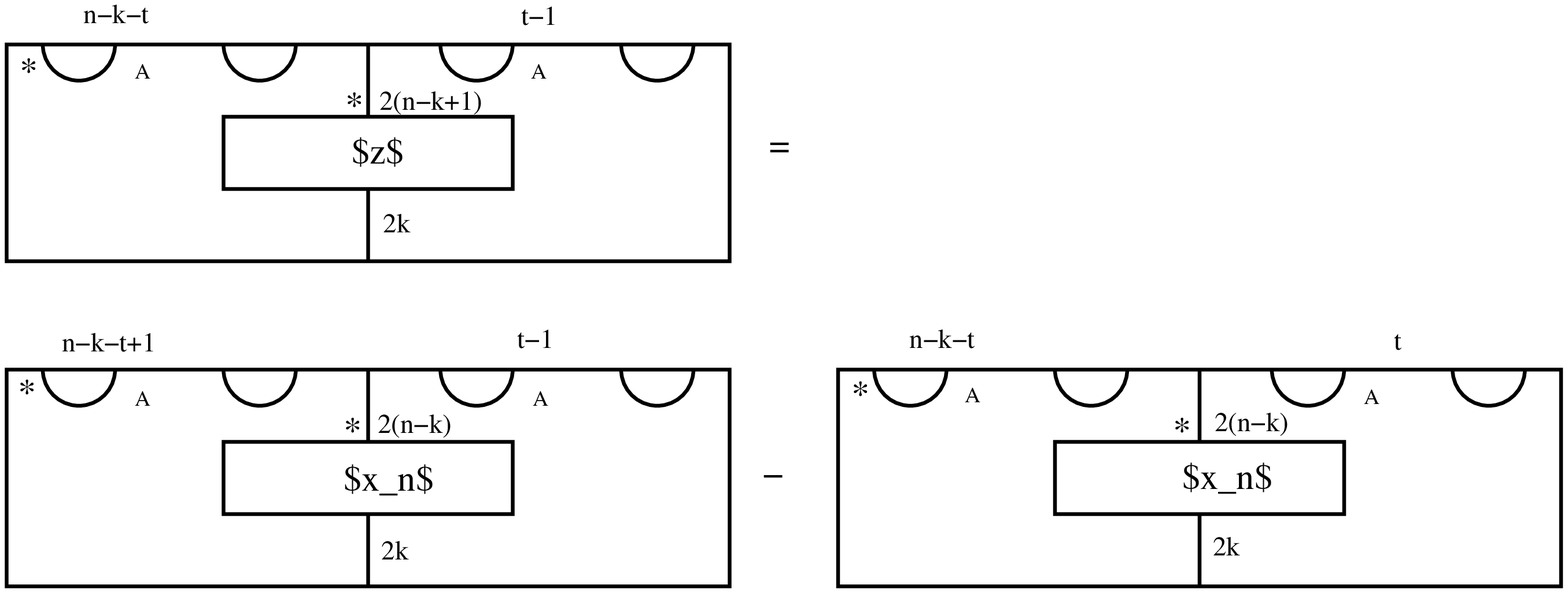}
\caption{Consequence of Figure \ref{zis}}
\label{ztis}
\end{figure}
(where the numbers on top of the ellipses indicate the number of cups).
Sum over $t$ between $1$ and $n-k$ and use the patent telescoping on the
right  to get the equation of Figure \ref{z3is}.
\begin{figure}[!htb]
\psfrag{n-k-t}{\tiny $n\!-\!k\!-\!t$}
\psfrag{n-k-t+1}{\tiny $n-k$}
\psfrag{t}{\tiny $n-k$}
\psfrag{t-1}{\tiny $t\!-\!1$}
\psfrag{S}{$\sum_{t=1}^{n-k}$}
\psfrag{A}{$\cdots$}
\psfrag{$x_n$}{$x$}
\psfrag{$z$}{$z$}
\psfrag{n-k}{\tiny $n\!-\!k$}
\psfrag{2(n-k+1)}{\tiny $2(n\!-\!k\!+\!1)$}
\psfrag{2(n-k)}{\tiny $2(n\!-\!k)$}
\psfrag{2k}{\tiny $2k$}
\includegraphics[width=12cm]{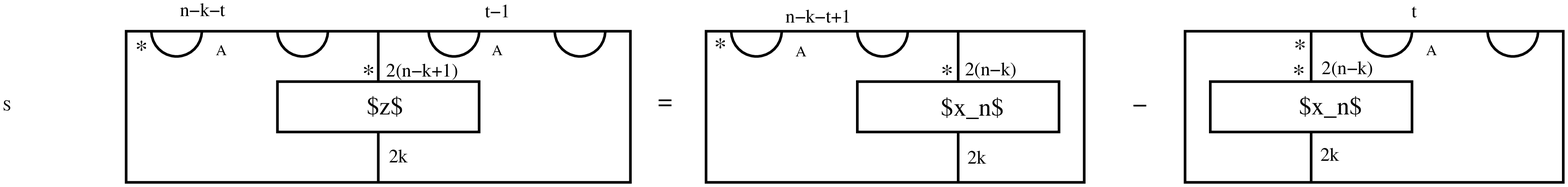}
\caption{Consequence of Figure \ref{ztis}}
\label{z3is}
\end{figure}

Finally, cap off the pairs of points $\{1,2\},\{3,4\},\cdots,\{2(n-k)-1,2(n-k)\}$ on all the tangles in the Figure \ref{z3is} to conclude that 
\begin{equation*}
\sum_{t=1}^{n-k} \delta^{n-k-t} Z_{T(k,[1,n+1-t-k],[t+1,n-k+1])^n_{n+1}}(z) = \delta^{n-k} x,
\end{equation*}
because the second term on the right vanishes by Remark \ref{qnkperp}.
\end{proof}

\begin{corollary}\label{xnxm}
Suppose that $\xi = (x_k,x_{k+1},\cdots) \in \oplus_{n=k}^\infty (C^n_k)^{\perp} =  C_k^{\perp}$ and satisfies $\lambda_k(c)(\xi) = \rho_k(c)(\xi)$. Then, for $m > n > k$ with $m-n = 2d$, we have:
\begin{eqnarray*}
x_n &=& \sum_{t=1}^{n-k} \delta^{-(t+d-1)}  \left\{Z_{T(k,[1,n+1-t-k],[t+d,n-k+d])^{n}_{m}}(x_m)\right.\\
 & &  - \left.Z_{T(k,[1,n+1-t-k],[t+d+1,n-k+d+1])^{n}_{m}}(x_m)\right\}
\end{eqnarray*}
\end{corollary}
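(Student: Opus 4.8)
The plan is to iterate Lemma \ref{ccommlem} relating $x_n$ to $x_{n+1}$ and then compose the resulting annular tangles using the composition formula. The key point is that the condition $\lambda_k(c)(\xi)=\rho_k(c)(\xi)$, restricted to the $P_{n+1}$ component, says precisely that $z := (c\#x_n - x_n\#c)_{n+1}$ is cancelled by the contributions from $x_{n+2}$ (and possibly lower terms), so that in fact $z$ is (up to sign and the known lower-order contribution) governed by $x_{n+2}$; more precisely, tracking degrees shows that the $P_{n+1}$ component of $\lambda_k(c)(\xi)-\rho_k(c)(\xi)$ receives contributions only from $x_n$ and $x_{n+2}$, so $(c\#x_n - x_n\#c)_{n+1} = -(c\#x_{n+2}-x_{n+2}\#c)_{n+1}$, and the right-hand side is expressible through $x_{n+2}$ by exactly the picture in Figure \ref{zis} read in the other direction (a single double-cup attaching $x_{n+2}\in P_{n+2}$ down to $P_{n+1}$).

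First I would establish the one-step recursion: apply Lemma \ref{ccommlem} with $n$ replaced by $n$ to invert $x_n \mapsto z=(c\#x_n-x_n\#c)_{n+1}$, obtaining
$$
x_n = \sum_{t=1}^{n-k} \delta^{-t}\, Z_{T(k,[1,n+1-t-k],[t+1,n-k+1])^n_{n+1}}(z),
$$
and then substitute $z = -(c\#x_{n+2}-x_{n+2}\#c)_{n+1}$, which by the degree count above is an annular-tangle image of $x_{n+2}$: concretely it is $Z_{S}(x_{n+2})$ for the annular tangle $S$ with one double-cup (plus the $2k$ through-strands and the $n-k$ remaining cap pairs), i.e. essentially $T(k,[1,n-k],[2,n-k+1])^{n+1}_{n+2}$ up to a sign coming from the commutator. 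Composing $T(k,[1,n+1-t-k],[t+1,n-k+1])^n_{n+1}$ with this tangle via the stated composition formula $Z_{T(k,A,B)^m_n}\circ Z_{T(k,C,D)^n_p} = \delta^{n-k-|B\cup C|} Z_{T(k,E,F)^m_p}$ with $E=f_{AB}^{-1}(B\cap C)$, $F=f_{CD}(B\cap C)$, collapses the sum over $t$ into a shorter sum and produces exactly the $d=1$ case of the Corollary after reindexing.

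Next I would iterate. Writing $m=n+2d$, apply the one-step relation $d$ times; at each stage one composes annular tangles of the type $T(k,\cdot,\cdot)$, and the composition formula guarantees the result is again a $\delta$-power times a single such tangle, with the index-sets shifting by one at each stage — this is the source of the shift from $[t+1,n-k+1]$ to $[t+d,n-k+d]$ (and the second term from $[t+1,n-k+1]$ to $[t+d+1,n-k+d+1]$). Careful bookkeeping of the accumulated $\delta$-powers gives the exponent $-(t+d-1)$; one checks this against the $d=1$ case, where $t+d-1=t$, matching Lemma \ref{ccommlem}. A cleaner alternative to a full induction is to observe directly that the $n-k$ single cups in $T(k,[1,n+1-t-k],[\,\cdot\,])^n_m$ are forced by the capping form of Remark \ref{qnkperp}, combined with the fact that $\xi\in C_k^\perp$ means each $x_m$ satisfies the capping condition of Figure \ref{kqnkk}; this lets one verify the claimed formula by simply checking it is a right-inverse (as in the proof of Lemma \ref{ccommlem}) rather than deriving it.

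The main obstacle I anticipate is the degree bookkeeping: one must be scrupulous that for the $P_{n+1}$ component of $c\#\xi-\xi\#c$ only the $x_n$ and $x_{n+2}$ terms survive (this uses $c\in P_1$ so $c\#x_m$ lives in $P_{m-1}\oplus P_{m+1}$), and that composing the annular tangles tracks the intersection $B\cap C$ of index intervals correctly so that telescoping/sign cancellations leave exactly the two surviving terms with the stated intervals and the stated power of $\delta$. Everything else is a routine, if somewhat intricate, application of the composition formula for the $T(k,A,B)^m_n$ tangles and of Remark \ref{qnkperp}; I would present it by first doing $d=1$ in full and then indicating the induction, since the inductive step is formally identical.
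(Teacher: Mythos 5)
Your overall skeleton is the same as the paper's: restrict the relation $\lambda_k(c)(\xi)=\rho_k(c)(\xi)$ to its $P_{n+1}$ component (where only $x_n$ and $x_{n+2}$ survive, the middle commutator term vanishing), invert the map $x_n\mapsto (c\#x_n-x_n\#c)_{n+1}$ via Lemma \ref{ccommlem}, and then induct on $d$ using the composition formula for the $T(k,A,B)$ tangles together with Remark \ref{qnkperp} and a telescoping cancellation. But there is a concrete error at the pivotal step: you identify $z:=(c\#x_n-x_n\#c)_{n+1}=(x_{n+2}\#c-c\#x_{n+2})_{n+1}$ with a \emph{single} annular-tangle image of $x_{n+2}$ (``a single double-cup'', $Z_S(x_{n+2})$ with $S$ essentially $T(k,[1,n-k],[2,n-k+1])^{n+1}_{n+2}$). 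That is not what this commutator is. Each of $(c\#x_{n+2})_{n+1}$ and $(x_{n+2}\#c)_{n+1}$ is the image of $x_{n+2}$ under an annular tangle with exactly one cap on the internal box (the degree drops by one, not two), and the two caps sit in shifted positions, so
\begin{equation*}
z \;=\; Z_{T(k,[1,n-k+1],[1,n-k+1])^{n+1}_{n+2}}(x_{n+2})\;-\;Z_{T(k,[1,n-k+1],[2,n-k+2])^{n+1}_{n+2}}(x_{n+2}).
\end{equation*}
This two-term structure is precisely the origin of the difference of two $T$-tangle terms in the statement you are proving; a single tangle $S$ cannot produce it. Relatedly, nothing ``collapses the sum over $t$'' at the $d=1$ stage: substituting the displayed two-term expression for $z$ into the inversion formula of Lemma \ref{ccommlem} and applying the composition formula merely extends each $T(k,[1,n+1-t-k],[t+1,n-k+1])^{n}_{n+1}$ by one step, and the full sum $t=1,\dots,n-k$ persists. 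Cancellations occur only in the inductive step $d>1$, where one must first use $x_m\in (C^m_k)^\perp$ (Remark \ref{qnkperp}) to cut the $(t,s)$-range before composing, and then observe that of the four cross-terms the first and third, and the second and fourth, telescope in $s$, leaving only the $s=1$ contributions.

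Two further cautions. Your claim that $c\#x_m$ lies in $P_{m-1}\oplus P_{m+1}$ is not correct as stated: the product has a $P_m$-component in general; what vanishes is the $P_m$-component of the \emph{commutator} $[c,x_m]$, and that is what you need. And the proposed ``cleaner alternative'' of merely checking the claimed formula is a right-inverse is not a proof of the corollary: unlike Lemma \ref{ccommlem}, the statement here is not an inversion statement but an identity that must be \emph{derived} from the chain of commutator equations linking $x_n, x_{n+2},\dots,x_m$; verifying a one-sided inverse of some map does not by itself show that $x_n$ is determined by $x_m$ in the asserted way. With the two-term expression for $z$ restored and the induction carried out with the range restrictions and telescoping made explicit, your argument becomes the paper's proof.
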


\begin{proof} As in the proof of Proposition \ref{dcomm}, some calculation shows that for $n>k$, the $P_{n+1}$ component of $\lambda_k(c)(\xi) - \rho_k(c)(\xi)$  
is seen to be
$$
\sum_{s=n}^{n+2} [c,x_s]_{n+1}
$$
of which the middle term is seen to vanish.
Hence it 
is given by the difference of the right and left hand
sides of the equation in Figure \ref{ccommeqn}, and therefore if
$\lambda_k(c)(\xi) = \rho_k(c)(\xi)$, the equations of Figure \ref{ccommeqn}  hold for all $n>k$.
\begin{figure}[!htb]
\psfrag{$x_n$}{$x_n$}
\psfrag{$x_{n+2}$}{$x_{n+2}$}
\psfrag{2k}{\tiny $2k$}
\psfrag{2(n-k)}{\tiny $2(n\!-\!k)$}
\psfrag{2(n-k+1)}{\tiny $2(n\!-\!k\!+\!1)$}
\includegraphics[width=12cm]{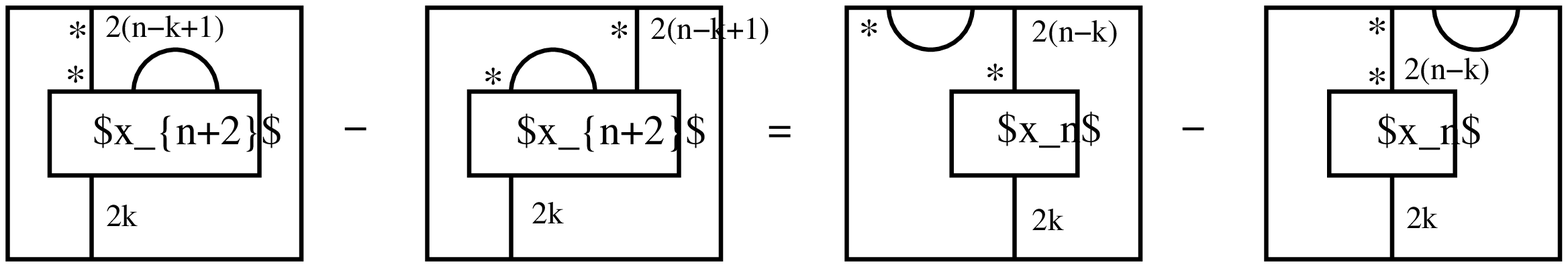}
\caption{The condition for commuting with $c$}
\label{ccommeqn}
\end{figure}

Let $z$ denote the value of the left and right hand sides of the equation in Figure \ref{ccommeqn}, so that on the one hand, we have
$$z = Z_{T(k,[1,n-k+1],[1,n-k+1])^{n+1}_{n+2}}(x_{n+2}) - Z_{T(k,[1,n-k+1],[2,n-k+2])^{n+1}_{n+2}}(x_{n+2}),$$ while on the other,
 $ z = (c\#x_n - x_n \# c)_{n+1}$. We now have by Lemma \ref{ccommlem} that
\begin{eqnarray*}
x_n &=& \sum_{t=1}^{n-k} \delta^{-t} Z_{T(k,[1,n+1-t-k],[t+1,n-k+1])^n_{n+1}}  \\
& & \left\{Z_{T(k,[1,n-k+1],[1,n-k+1])^{n+1}_{n+2}}(x_{n+2}) - Z_{T(k,[1,n-k+1],[2,n-k+2])^{n+1}_{n+2}}(x_{n+2})\right\} \\
&=& \sum_{t=1}^{n-k} \delta^{-t} \left\{Z_{T(k,[1,n+1-t-k],[t+1,n-k+1])^{n}_{n+2}}(x_{n+2})\right.  \\
 & &  - \left.Z_{T(k,[1,n+1-t-k],[t+2,n-k+2])^{n}_{n+2}}(x_{n+2})\right\},
\end{eqnarray*}
which proves the $d=1$ case. For $d > 1$, assume
by induction on $d$ that
\begin{eqnarray*}
x_{n+2} = \sum_{s=1}^{n+2-k} \delta^{-(s+d-2)} &\times&\left\{Z_{T(k,[1,n+3-s-k],[s+d-1,n-k+d+1])^{n+2}_{m}}(x_m)\right.\\
 & &  - \left.Z_{T(k,[1,n+3-s-k],[s+d,n-k+d+2])^{n+2}_{m}}(x_{m})\right\}
\end{eqnarray*}

Substituting the expression for $x_{n+2}$ from the last equation into the preceding equation,
we find:
\begin{eqnarray*}
\ x_n &=& \sum_{t=1}^{n-k} \sum_{s=1}^{n+2-k} \delta^{-(t+s+d-2)} \times \\
& & \left\{     Z_{T(k,[1,n+1-t-k],[t+1,n-k+1])^{n}_{n+2}} \circ   Z_{T(k,[1,n+3-s-k],[s+d-1,n-k+d+1])^{n+2}_{m}}(x_m)             \right.\\
& &- Z_{T(k,[1,n+1-t-k],[t+1,n-k+1])^{n}_{n+2}} \circ  Z_{T(k,[1,n+3-s-k],[s+d,n-k+d+2])^{n+2}_{m}}(x_{m})\\
& &- Z_{T(k,[1,n+1-t-k],[t+2,n-k+2])^{n}_{n+2}} \circ Z_{T(k,[1,n+3-s-k],[s+d-1,n-k+d+1])^{n+2}_{m}}(x_m)\\
& &+ \left. Z_{T(k,[1,n+1-t-k],[t+2,n-k+2])^{n}_{n+2}} \circ Z_{T(k,[1,n+3-s-k],[s+d,n-k+d+2])^{n+2}_{m}}(x_{m}) \right\}
\end{eqnarray*}

This is a sum over varying $t$ and $s$ of 4 terms, each with a multiplicative factor
of a power of $\delta$. Note that as $x_m \in (C^m_k)^\perp$, it follows from Remark \ref{qnkperp} that the first two terms vanish unless $t \leq n+2-s-k$ and the last two vanish unless $t \leq n+1-s-k$. So the sum of the first two terms may be taken over
those $(t,s)$ satisfying $t+s \leq n+2-k$ while the sum of the last two terms may
be taken over those $(t,s)$ satisfying $t+s \leq n+1-k$. In this range, the composition formula for the
$T(k,A,B)$ tangles gives:
\begin{eqnarray*}
 Z_{T(k,[1,n+1-t-k],[t+1,n-k+1])^{n}_{n+2}} &\circ& Z_{T(k,[1,n+3-s-k],[s+d-1,n-k+d+1])^{n+2}_{m}}\\  &=&\left\{ \begin{array}{ll} Z_{T(k,[1,n+1-t-k],[t+d,n-k+d])^{n}_{m}} {\rm if}~s=1 \\ \delta Z_{T(k,[1,n+3-s-k-t],[s+t+d-1,n-k+d+1])^{n}_{m}} {\rm if}~s>1\end{array} \right. \\
Z_{T(k,[1,n+1-t-k],[t+1,n-k+1])^{n}_{n+2}} &\circ&  Z_{T(k,[1,n+3-s-k],[s+d,n-k+d+2])^{n+2}_{m}} \\ &=& \left\{ \begin{array}{ll} Z_{T(k,[1,n+1-t-k],[t+d+1,n-k+d+1])^{n}_{m}} {\rm if}~s=1 \\ \delta Z_{T(k,[1,n+3-s-k-t],[s+t+d,n-k+d+2])^{n}_{m}} {\rm if}~s>1\end{array} \right. \\
Z_{T(k,[1,n+1-t-k],[t+2,n-k+2])^{n}_{n+2}} &\circ& Z_{T(k,[1,n+3-s-k],[s+d-1,n-k+d+1])^{n+2}_{m}}\\ &=& Z_{T(k,[1,n+2-s-k-t],[s+t+d,n-k+d+1])^{n}_{m}}\\
Z_{T(k,[1,n+1-t-k],[t+2,n-k+2])^{n}_{n+2}} &\circ& Z_{T(k,[1,n+3-s-k],[s+d,n-k+d+2])^{n+2}_{m}}\\  &=& Z_{T(k,[1,n+2-s-k-t],[s+t+d+1,n-k+d+2])^{n}_{m}}
\end{eqnarray*}

Finally, note that the first two terms with $s=1$ sum to exactly the desired
expression for $x_n$ while for $s >1$, the first term corresponding to $(t,s)$ cancels
against the third term corresponding to $(t,s-1)$ while the second term
corresponding to $(t,s)$ cancels with the fourth term corresponding to
$(t,s-1)$, finishing the induction.
\end{proof}

We only need one more thing in order to complete the proof of
Proposition \ref{ccomm}.
\begin{lemma}\label{easy}
Suppose that $x = x_n \in P_n \subseteq F_k(P)$ and let $y = Z_{T(k,A,B)^m_n}(x) \in P_m \subseteq F_k(P)$ for some annular tangle
$T(k,A,B)^m_n$. Then $||y||_{H_k} \leq \delta^{\frac{1}{2}(n+m) - (|A|+k)} ||x||_{H_k}$.
\end{lemma}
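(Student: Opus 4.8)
The plan is to compute both sides of the asserted inequality as traces of appropriate $0$-tangles and compare them. Concretely, write $\|y\|_{H_k}^2 = \delta^{k-m}\tau(y^*y)$ and $\|x\|_{H_k}^2 = \delta^{k-n}\tau(x^*x)$, so the inequality to be proved is equivalent to
\[
\tau(y^*y) \le \delta^{\,2(|A|+k) - (n+m) + (m-k) - (n-k)}\,\tau(x^*x) \;=\; \delta^{\,2|A| - 2n + 2k}\,\tau(x^*x),
\]
after substituting $\|y\|^2_{H_k}=\delta^{k-m}\tau(y^*y)$ and collecting the powers of $\delta$. Thus it suffices to show $\tau(y^*y) \le \delta^{2(|A|+k-n)}\,\tau(x^*x)$ as an inequality in $\mathbb{C}=P_0$, which is now a statement purely about the planar algebra $P$ and the annular tangle $T(k,A,B)^m_n$.

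The key step is to draw the picture for $y^*y$: it is the $0$-tangle obtained by capping off $y=Z_{T(k,A,B)^m_n}(x)$ against its adjoint $y^* = Z_{T(k,A,B)^m_n{}^*}(x^*)$ along all $2m$ boundary points of $P_m$. Because of the structure of $T(k,A,B)^m_n$ — the strands $f_{AB}:A\to B$ carrying $2|A|$ through-strands, the $2k$ through-strands at the tail, and the remaining boundary points capped off in non-nested pairs — the $2m-2|A|-2k$ capping cups on the outer box of $T(k,A,B)^m_n$ get glued directly to those of the adjoint, producing closed loops. Reading off the resulting diagram, $y^*y$ equals $\delta^{(\text{number of closed loops})}$ times the $0$-tangle that caps $x$ against $x^*$ through the $2|A|+2k$ surviving strands (and re-caps the rest of $x$'s boundary by the internal cups of $T$). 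A careful count: gluing the two copies of $T(k,A,B)^m_n$ along the $m$-box yields exactly $m - |A| - k$ closed loops from the matched outer cups, and what remains is precisely the tangle computing $\tau$ of $x$ against $x^*$ with $x$'s boundary points outside $A$ (and outside the tail $2k$) capped off in pairs.

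From here I would invoke positivity: the residual diagram expresses $\delta^{n-|A|-k}\tau(y^*y)$ (the $\delta^{n-|A|-k}$ collecting the internal-cup loops and the trace normalization) as $Z_{T}(x^*x)$ for some composite of expectation-type tangles $T$ applied to the positive element $x^*x \in P_n$. By Remark \ref{plalg}, each such capping/expectation tangle is a positive map of controlled norm — the right-expectation tangles have norm $\delta^i$ where $i$ is the number of strands expected away, and capping a through-pair contributes a further factor bounded by $\delta$ — so the composite map has operator norm at most $\delta^{\,(\text{number of capped strand-pairs})}$, which equals $\delta^{\,n - |A| - k}$ for capping $x$'s boundary down from $2n$ to $2|A|+2k$ points. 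Applying $\tau$ (norm $1$) and comparing, $\tau(y^*y) \le \delta^{\,n-|A|-k}\cdot\delta^{\,n-|A|-k}\,\tau(x^*x) = \delta^{2(n-|A|-k)}\tau(x^*x)$; wait — I'd recheck the sign, since the claimed bound is $\delta^{2(|A|+k-n)}$, the \emph{reciprocal}. This forces the correct reading: the capping is not an expectation shrinking $x$, but rather the loops are \emph{created}, so one gets a factor $\delta^{+(n-|A|-k)}$ on the diagram side against which we divide, yielding $\tau(y^*y)=\delta^{-(n-|A|-k)}Z_{T'}(x^*x)$ with $T'$ now a norm-$\le\delta^{0}$ composition of expectations, giving $\tau(y^*y)\le\delta^{-(n-|A|-k)}\tau(x^*x)$ — but that is weaker than claimed when $n<|A|+k$, which cannot happen since $|A|+k\le n$. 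So in fact $\delta^{-(n-|A|-k)} = \delta^{|A|+k-n} \le \delta^{2(|A|+k-n)}$ exactly when $|A|+k\le n$, which always holds; this is consistent once one is careful, since $|A| \le n-k$.

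The main obstacle will be the bookkeeping of powers of $\delta$: correctly counting the closed loops created when the outer cups of $T(k,A,B)^m_n$ are sealed against their mirror images, and tracking which capping operations are genuine conditional-expectation tangles (norm $\delta^i$) versus loop-creating caps on through-strands. Once the diagram is drawn and the loop count pinned down, everything else reduces to: (a) Remark \ref{plalg} on positivity and norms of expectation tangles, (b) $\|\tau\|=1$, and (c) the elementary inequality $\delta^{|A|+k-n}\le\delta^{2(|A|+k-n)}$ valid because $\delta>1$ and $|A|+k\le n$. I would present the loop count via the explicit picture (an analogue of Figure \ref{segmented}) rather than a formula, since the non-nested capping structure makes the count transparent diagrammatically.
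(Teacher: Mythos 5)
Your reduction step is where the argument breaks. For $z\in P_u\subseteq H_k$ the paper's normalisation is $\delta^{k-u}\,||z||^2_{H_k}=\tau(z^*z)$, i.e.\ $||z||^2_{H_k}=\delta^{u-k}\tau(z^*z)$, whereas you used the inverted relation $||z||^2_{H_k}=\delta^{k-u}\tau(z^*z)$. With the correct relation the lemma is equivalent to $\tau(y^*y)\leq \delta^{2(n-|A|-k)}\,\tau(x^*x)$ (this is exactly the form $||y||^2_{P_m}\leq\delta^{2(n-|A|-k)}||x||^2_{P_n}$), not to $\tau(y^*y)\leq\delta^{2(|A|+k-n)}\tau(x^*x)$. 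The statement you set out to prove is in fact false: take $k=0$, $m=n=1$, $A=B=\emptyset$, $x=1_1$; then $y=Z_T(x)=\delta\,1_1$, so $\tau(y^*y)=\delta^2$, while your target requires $\tau(y^*y)\leq\delta^{-2}$. The same example kills your intermediate claim $\tau(y^*y)\leq\delta^{|A|+k-n}\tau(x^*x)$, and your final patch relies on $\delta^{|A|+k-n}\leq\delta^{2(|A|+k-n)}$, which goes the wrong way: since $\delta>1$ and $|A|+k\leq n$, one has $2(|A|+k-n)\leq |A|+k-n$, hence $\delta^{2(|A|+k-n)}\leq\delta^{|A|+k-n}$. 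So the proof as written does not establish the lemma (indeed the correct constant $\delta^{2(n-|A|-k)}$ is attained in the example above, so no improvement of the sign is possible).

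Your diagrammatic bookkeeping also attributes the $\delta$-factors to the wrong place. The $m-|A|-k$ cups adjacent to the external box cost nothing in the $\tau$-norm: in $\tau(y^*y)=\delta^{-m}Z_{TR^0_m}(y^*y)$ each such cup meets its mirror image and closes to a loop, and the resulting $\delta^{m-|A|-k}$ exactly offsets the change in trace normalisation. The entire estimate comes from the internal side: $T(k,A,B)^m_n$ factors through capping off the $n-|A|-k$ non-through pairs on the internal box, and each such cap is (up to the unitary rotations of Remark \ref{plalg}) a right-expectation tangle, i.e.\ $\delta$ times a trace-preserving conditional expectation, hence a map of $||\cdot||_{P}$-norm at most $\delta$. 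This gives $||y||_{P_m}\leq\delta^{\,n-|A|-k}\,||x||_{P_n}$, which after converting back with $||z||^2_{H_k}=\delta^{u-k}\tau(z^*z)$ is precisely the claimed bound $||y||_{H_k}\leq\delta^{\frac12(n+m)-(|A|+k)}||x||_{H_k}$; this is the one-line argument the paper intends when it says the statement "follows from Remark \ref{plalg}". Redo the power count with the correct norm relation and this factorisation, and your approach collapses to the paper's.
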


\begin{proof}
Since for $z \in P_u \subseteq F_k(P)$, the norm relation $\delta^{k-u} ||z||^2_{H_k}
= ||z||^2_{P_u} = \tau(z^*z)$ holds, the inequality we need to verify may be
equivalently stated as 
$||y||_{P_m}^2 \leq \delta^{2(n-|A|-k)}  ||x||_{P_n}^2$. We leave it to the reader
to see that this follows from Remark \ref{plalg}.
\end{proof}

\begin{proof}[Proof of Proposition \ref{ccomm}]
Observe first that $C^n_k$ is easily verified to be in $[c]$ and so
$C_k \subseteq [c]$. To show the other inclusion we will show that
$C_k^\perp \cap [c] = \{0\}$.

Take $\xi = (x_k,x_{k+1},\cdots) \in C_k^\perp = \oplus_{n=k}^\infty (C^n_k)^\perp$
and suppose that $\xi \in [c]$.
Then, $x_k =0$ since $C^k_k = P_k$ and $x_k \in (C_k^k)^\perp$.
Next, it follows from Corollary \ref{xnxm} and Lemma \ref{easy} that
if $m > n > k$ and $m-n = 2d$, then
$$
||x_n||_{H_k} \leq \sum_{t=1}^{n-k} \delta^{-(t+d-1)} \delta^{\frac{1}{2}(n+(n+2d))-(n+1-t)} 2 ||x_m||_{H_k} = 2(n-k)||x_m||_{H_k}.
$$
Since $\xi \in H_k$ clearly implies that $lim_{m \rightarrow \infty} ||x_m|| = 0$, it
follows that $x_n = 0$ also for $n >k$.
 Hence $\xi = 0$.
\end{proof}

We will refer to $M^\lambda_0 \subseteq M^\lambda_1$ as the
subfactor constructed from $P$.

\section{Basic construction tower, Jones projections and the main theorem}

In this section, we first show that the tower $M^\lambda_0 \subseteq
M^\lambda_1 \subseteq \cdots$ of $II_1$-factors may be identified
with the basic construction tower of  $M^\lambda_0 \subseteq
M^\lambda_1$ which is extremal and has index $\delta^2$. 
We also explicitly identify the Jones projections. Using this, we
easily prove our main theorem.

We begin with an omnibus proposition.

\begin{proposition}\label{omnibus}
For $k \geq 1$, the following statements hold.
\begin{enumerate} 
\item The trace preserving conditional expectation map
$E_{k-1}^\lambda: M^\lambda_k \rightarrow M^\lambda_{k-1}$ is given by
the restriction to $M^\lambda_k$ of the continuous extension
$H_k \rightarrow H_{k-1}$ of $E_{k-1}: F_k(P) \rightarrow F_{k-1}(P)$
(where $M^\lambda_k \subseteq H_k$ by $x \mapsto x\Omega$).
It is continuous for the strong operator topologies on the domain and
range.
\item The element $e_{k+1} \in
P_{k+1} \subseteq F_{k+1}(P)$ commutes with $F_{k-1}(P)$ and satisfies
$E^\lambda_k(e_{k+1}) = \delta^{-2}$ and
$e_{k+1}xe_{k+1} = E^\lambda_{k-1}(x)e_{k+1}$,
for any $x \in M^\lambda_k$.
\item The map $\theta_k : F_{k+1}(P) \rightarrow
End(F_k(P))$ defined by $\theta_k(a)(b) = \delta^2 E_k(abe_{k+1})$
is a homomorphism.
\item The tower $M^\lambda_{k-1} \subseteq
M^\lambda_k \subseteq M^\lambda_{k+1}$ of $II_1$-factors is (isomorphic to) a basic construction tower  with Jones projection given by $e_{k+1}$ and finite index $\delta^2$.
\item The subfactor $M^\lambda_{k-1} \subseteq M^\lambda_k$ is extremal.
\end{enumerate}
\end{proposition}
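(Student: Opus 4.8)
The plan is to prove the five statements in order, since each feeds into the next. For (1), I would note that the algebraic map $E_{k-1}:F_k(P)\to F_{k-1}(P)$ of Section 3 is, by part (5) of Proposition \ref{filtalg}, a trace-preserving $F_{k-1}(P)$-bimodule retraction, hence contractive for the Hilbert norms (it is a projection onto the closed subspace $H_{k-1}\subseteq H_k$ in a suitable sense — more precisely, identifying $H_{k-1}$ with the completion of $F_{k-1}(P)$ inside $H_k$ via the inclusion, $E_{k-1}$ extends to the orthogonal projection). Thus it extends continuously to $H_k\to H_{k-1}$. Restricting to $M^\lambda_k$ (embedded via $x\mapsto x\Omega$) and using that $\Omega$ is a common cyclic and separating trace vector, one checks that this restriction lands in $M^\lambda_{k-1}$, is a trace-preserving conditional expectation, and — using that multiplication operators are strongly continuous on norm-bounded sets together with Lemma \ref{normality} — is strongly continuous on bounded sets. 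The key point is that $E_{k-1}$ intertwines left multiplication by $F_{k-1}(P)$, so by density and normality it is the unique trace-preserving conditional expectation $M^\lambda_k\to M^\lambda_{k-1}$.

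For (2), I would read off from the tangle defining $e_{k+1}=Z_{E^{k+2}}(1)/\delta$ (or rather the image of the Jones projection tangle in $P_{k+1}\subseteq F_{k+1}(P)$) that, pictorially, $e_{k+1}$ commutes with the image of $F_{k-1}(P)$ — this is the standard pictorial computation, using that the cups of $e_{k+1}$ sit beyond the strands coming from $F_{k-1}(P)$. The relation $E^\lambda_k(e_{k+1})=\delta^{-2}$ is a direct evaluation of the tangle $\delta E_k$ applied to $e_{k+1}$ (a closed loop appears, contributing a $\delta$, against the $\delta^{-1}$ in the definition of $E_k$ and a further normalization), and $e_{k+1}xe_{k+1}=E^\lambda_{k-1}(x)e_{k+1}$ is proved first for $x\in P_n\subseteq F_k(P)$ by a pictorial identity — sandwiching the tangle for $x$ between two copies of $e_{k+1}$ creates exactly the cup pattern computing $E_{k-1}(x)$ — and then extended to all $x\in M^\lambda_k$ by strong continuity (using (1) and the normality of the maps $y\mapsto e_{k+1}ye_{k+1}$).

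For (3), I would verify directly that $\theta_k(a)(b)=\delta^2 E_k(a\#b\#e_{k+1})$ is multiplicative: compute $\theta_k(a_1)\theta_k(a_2)(b)=\delta^4 E_k(a_1\#E_k(a_2\#b\#e_{k+1})\#e_{k+1})$ and use the bimodule property of $E_k$ together with the relation $e_{k+1}$ satisfies — namely $e_{k+1}\#c\#e_{k+1}=E_{k-1}(c)\#e_{k+1}$ applied appropriately — to collapse this to $\delta^2 E_k(a_1\#a_2\#b\#e_{k+1})=\theta_k(a_1\#a_2)(b)$; this is essentially the same computation as in the abstract characterization of the basic construction. For (4), I would invoke the abstract characterization of the Jones basic construction: $M^\lambda_{k-1}\subseteq M^\lambda_k$ is a $II_1$-subfactor (factoriality from Proposition \ref{relcomm} and Lemma \ref{moregen}), $e_{k+1}\in M^\lambda_{k+1}$ is a projection with $e_{k+1}xe_{k+1}=E^\lambda_{k-1}(x)e_{k+1}$ for $x\in M^\lambda_k$ and $E^\lambda_k(e_{k+1})=\delta^{-2}$, and $M^\lambda_{k+1}$ is generated by $M^\lambda_k$ and $e_{k+1}$ — the last point being where I expect the main obstacle to lie. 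To establish that $M^\lambda_k\cup\{e_{k+1}\}$ generates $M^\lambda_{k+1}$, I would use (3): the homomorphism $\theta_k$ identifies the algebra generated by $F_k(P)$ and $e_{k+1}$ inside $F_{k+1}(P)$ with $End_{F_{k-1}(P)}(F_k(P))$ (or rather with the algebra of the basic construction), which is dense in $M^\lambda_{k+1}$ because the Hilbert space $H_{k+1}$ is, via $\theta_k$, the $L^2$-space of that basic construction — this requires matching the trace on $\langle M^\lambda_k,e_{k+1}\rangle$ with $t^\lambda_{k+1}$, which follows from $E^\lambda_k(e_{k+1})=\delta^{-2}$ and the uniqueness of the trace on the basic construction. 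Then the index is $[M^\lambda_{k+1}:M^\lambda_k]=[M^\lambda_k:M^\lambda_{k-1}]$ and equals $\delta^2$ because $\theta_k$ gives an explicit isomorphism and the trace of $e_{k+1}$ is $\delta^{-2}$.

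Finally, for (5), extremality of $M^\lambda_{k-1}\subseteq M^\lambda_k$: I would use the description of the relative commutants from Proposition \ref{relcomm} and Lemma \ref{moregen}, namely $(M^\lambda_{k-1})'\cap M^\lambda_{k+j}$ sits inside $P_{k+j}$ with explicitly known trace (the restriction of $\tau$, scaled), and check that the trace on the relative commutant $(M^\lambda_{k-1})'\cap M^\lambda_k$ computed from $M^\lambda_k$ agrees with the one computed from $(M^\lambda_{k-1})'$ acting on $L^2(M^\lambda_k)$. Concretely, extremality is equivalent to the two natural traces on $N'\cap M$ agreeing, and since here $N'\cap M=P_{i,k}$ with the trace $\tau$ inherited from the spherical planar algebra $P$, sphericity of $P$ is exactly what forces the two traces to coincide — this is the planar-algebraic incarnation of extremality. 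Alternatively, and perhaps more cleanly, extremality follows from the fact that we have built a full basic-construction tower with a consistent trace in which the Jones projections $e_{k+1}$ have the correct trace $\delta^{-2}$ at every level and the relative commutants have the dimensions dictated by the (finite-dimensional) planar algebra $P$; this consistency across the whole tower, together with $\dim P_n<\infty$, forces extremality by the standard argument (e.g.\ via Pimsner--Popa or via the equality of the two traces on $\bigcup_n N'\cap M_n$). I would present the sphericity argument as the main line and expect no serious obstacle here once (1)--(4) are in hand.
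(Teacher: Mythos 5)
Parts (1) and (2) of your sketch are essentially the paper's argument, and your main line for (5) (sphericality of $P$ forcing agreement of the two traces on the relative commutant, after identifying it inside $P_k$) is the same idea the paper implements, there via a reduction to $k=1$ and Pimsner--Popa's criterion together with the pictorial identification of $x \mapsto J x^* J$ as the obvious map $P_1 \to P_{1,2}$. The genuine problems are in (3) and (4). In (3), your collapse of $\theta_k(a_1)\theta_k(a_2)(b)=\delta^4E_k\bigl(a_1\#E_k(a_2\#b\#e_{k+1})\#e_{k+1}\bigr)$ to $\delta^2E_k(a_1\#a_2\#b\#e_{k+1})$ requires the ``pushdown'' identity $E_k(w\#e_{k+1})\#e_{k+1}=\delta^{-2}\,w\#e_{k+1}$ for an \emph{arbitrary} $w=a_2\#b\in F_{k+1}(P)$. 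The relation you invoke from (2), namely $e_{k+1}\#c\#e_{k+1}=E_{k-1}(c)\#e_{k+1}$, is only available for $c$ in $F_k(P)$ (or $M^\lambda_k$), and it yields the pushdown only if you already know that $F_{k+1}(P)$ is (densely) spanned by $F_k(P)+F_k(P)\#e_{k+1}\#F_k(P)$ --- which is essentially the content of (4) and is not known at this stage. The identity is true a posteriori, once $M^\lambda_{k+1}$ has been identified with a basic construction, but to use it here you would have to verify it by a direct planar computation; this is precisely why the paper proves multiplicativity of $\theta_k$ on $F_{k+1}(P)$ by an explicit pictorial/combinatorial argument parallel to the associativity proof of Proposition \ref{filtalg}(1) (the tangles of Figures \ref{adotbt} and \ref{action} and the index bijection $T(m,n,p)$), rather than deducing it from the $e_{k+1}$-relations.

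In (4), the gap you yourself flag --- that $M^\lambda_k\cup\{e_{k+1}\}$ generates $M^\lambda_{k+1}$ --- is not closed by your sketch: the assertion that $H_{k+1}$ is ``via $\theta_k$ the $L^2$-space of that basic construction'' presupposes exactly the density/surjectivity you are trying to establish, and the appeal to uniqueness of the trace on the basic construction is only available after you know that the image of $\theta_k$ lies in the basic construction and is weakly dense there. The paper avoids the generation question altogether: it extends $\theta_k$ to all of $M^\lambda_{k+1}$ by $\theta_k(x)(y\Omega)=\delta^2E^\lambda_k(xye_{k+1})$, shows this extension is a normal, injective, unital $*$-homomorphism (Kaplansky density, strong continuity of multiplication on norm-bounded sets, Lemma \ref{normality}), and then pins down its image as $J_k\,\lambda_k(M^\lambda_{k-1})'\,J_k$ --- the standard model of the basic construction --- by a two-sided commutant computation using that $\tilde e_{k+1}$ implements $E^\lambda_{k-1}$; that $M^\lambda_k$ and $e_{k+1}$ generate $M^\lambda_{k+1}$ then comes out as a corollary rather than being an input. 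To repair your route you would in effect have to carry out the same two steps (the image of $\theta_k$ commutes with $J_kM^\lambda_{k-1}J_k$, and $\theta_k$ carries $t_{k+1}$ to the canonical trace there), so you should follow the paper's order of argument: prove (3) pictorially first, then identify the image of the extended $\theta_k$ by the commutant computation.
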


\begin{proof}
(1) Since Lemma \ref{single}(4) identifies $H_k$  with $L^2(M^\lambda_k,t^\lambda_k)$, it suffices to see that if $\tilde{e}_{k+1} \in {\mathcal L}(H_k)$ (for $k \geq 1$) denotes the orthogonal projection onto the
closed subspace $H_{k-1}$, then $\tilde{e}_{k+1}|_{F_k(P)} = E_{k-1}$.
Since $H_k = \oplus_{n=k}^\infty P_n$ while $H_{k-1}$ is the sum
of the subspaces $\oplus_{n=k}^\infty P_{n-1}$, it suffices to see that
the restriction of $E_{k-1}$ to $P_n \subseteq F_k(P)$ is the
orthogonal projection of $P_n$ onto $P_{n-1}$ included in $P_n$ via
the specification of Figure \ref{incldef}, which is easily verified. Continuity for the strong operator
topologies follows since $E_{k-1}^\lambda(x)$ is the compression
of $x$ to the subspace $H_{k-1}$.
\\
(2) We omit the simple calculations needed to verify this.\\ 
(3) Denote $\theta_k(a)(b)$ by $a.b$. What is to be seen then, is that
$(a\#b).c = a.(b.c)$ for $a,b \in F_{k+1}(P)$ and
$c \in F_k(P)$. 
Some computation with the definitions yields for $a \in P_m \subseteq F_{k+1}(P)$
and $b \in P_n \subseteq F_k(P)$, $a.b = \sum_{t=|m-n-1|+k}^{m+n-k-1} (a.b)_t$ where $(a.b)_t$ is given by the tangle in Figure \ref{adotbt}.
\begin{figure}[!htb]
\psfrag{k+1}{\tiny $k\!+\!1$}
\psfrag{k-1}{\tiny $k\!-\!1$}

\psfrag{a}{$a$}
\psfrag{b}{$b$}
\psfrag{m+p-n-s}{\tiny $m\!+\!p\!-\!n\!-\!s$}
\psfrag{m+p-n-s}{\tiny $m\!+\!p\!-\!n\!-\!s$}
\psfrag{n+s-m-p}{\tiny $n\!+\!s\!-\!m\!-\!p$}
\psfrag{t+p-s-k-1}{\tiny $t\!+\!p\!-\!s\!-\!k\!-\!1$}
\psfrag{m+n-k-t-1}{\tiny $m\!+\!n\!-\!t\!-\!k\!-\!1$}
\psfrag{n-m+t-k-1}{\tiny $n\!-\!m\!+\!t\!-\!k\!-\!1$}
\psfrag{p-t+s}{\tiny $p\!-\!t\!+\!s$}
\psfrag{n+t-m}{\tiny $n\!-\!m\!+\!t$}
\psfrag{m+t-n}{\tiny $m\!-\!n\!+\!t$}
\psfrag{-k+1}{\tiny $-\!k\!+\!1$}
\psfrag{-k-1}{\tiny $-\!k\!-\!1$}

\includegraphics[height=2.5cm]{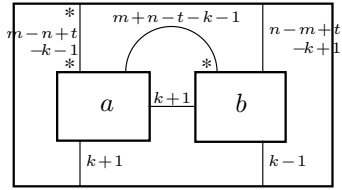}
\caption{Definition of the $P_t$ component of $a.b$.}
\label{adotbt}
\end{figure}
Thus for $a \in P_m \subseteq F_{k+1}(P),b \in P_n \subseteq F_{k+1}(P)$ and $c \in P_p \subseteq F_k(P)$, we have:

$$
(a\#b).c = \sum_{(t,s) \in I(m,n,p)} ((a\#b)_t.c)_s {\rm \ \ and\ \ }
a.(b.c) = \sum_{(v,u) \in J(m,n,p)} (a.(b.c)_v)_u,
$$
where $I(m,n,p) = \{(t,s) : |m-n|+k+1 \leq t \leq m+n-k-1, |t-p-1|+k \leq s \leq t+p-k-1\}$ and $J(m,n,p) = \{(v,u) : |n-p-1|+k \leq v \leq n+p-k-1, |m-v-1|+k \leq u \leq m+v-k-1\}$. We leave it to the reader to check that
(a) $I(m,n,p) = J(p+1,n,m-1)$ and
(b) the map  $T(m,n,p): I(m,n,p) \rightarrow J(m,n,p)$ defined by
$(t,s) \mapsto (v,u) = (max\{m+p,n+s\}-t,s)$ is a well-defined bijection 
with inverse $T(p+1,n,m-1)$
such that
(c) if $(t,s) \mapsto (v,u)$ then both $((a\#b)_t.c)_s$ and $(a.(b.c)_v)_u$
are equal to the figure on the left or on the
right in Figure \ref{action}
\begin{figure}[!htb]
\psfrag{a}{$a$}
\psfrag{b}{$b$}
\psfrag{c}{$c$}
\psfrag{k+1}{\tiny $k\!+\!1$}
\psfrag{k-1}{\tiny $k\!-\!1$}
\psfrag{m+p-n-s}{\tiny $m\!+\!p\!-\!n\!-\!s$}
\psfrag{m+p-n-s}{\tiny $m\!+\!p\!-\!n\!-\!s$}
\psfrag{n+s-m-p}{\tiny $n\!+\!s\!-\!m\!-\!p$}
\psfrag{t+p-s-k-1}{\tiny $t\!+\!p\!-\!s\!-\!k\!-\!1$}
\psfrag{m+n-t-k-1}{\tiny $m\!+\!n\!-\!t\!-\!k\!-\!1$}
\psfrag{n-m+t-k-1}{\tiny $n\!-\!m\!+\!t\!-\!k\!-\!1$}
\psfrag{p-t+s}{\tiny $p\!-\!t\!+\!s$}
\psfrag{s-p+t}{\tiny $s\!-\!p\!+\!t$}
\psfrag{m-n+t}{\tiny $m\!-\!n\!+\!t$}
\psfrag{-k+1}{\tiny $-\!k\!+\!1$}
\psfrag{-k-1}{\tiny $-\!k\!-\!1$}
\includegraphics[height=2.7cm]{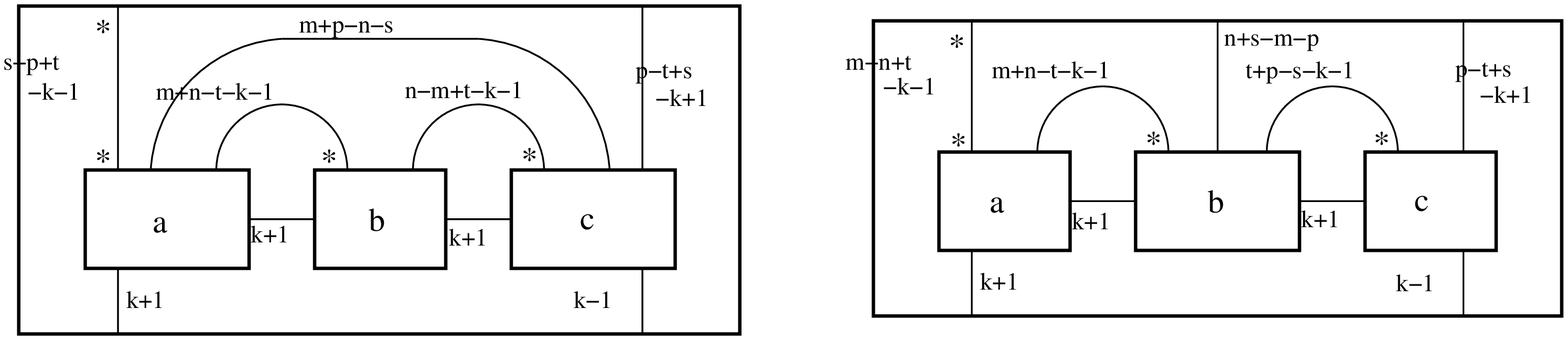}
\caption{$((a\#b)_t.c)_s  = (a.(b.c)_v)_u$.}
\label{action}
\end{figure}
according as $m+p \geq n+s$ or $m+p \leq n+s$.\\
(4) Extend  $\theta_k$ to a map, also denoted $\theta_k : M^\lambda_{k+1} \rightarrow
{\mathcal L}(H_k)$ by defining it on the dense subspace $M^\lambda_k \Omega \subseteq 
H_k$ by the formula (from Lemma 4.3.1
of \cite{JnsSnd}) $\theta_k(x)(y\Omega) = \delta^2 E^\lambda_k(xye_{k+1})$ for $x \in M^\lambda_{k+1}$ and $y \in M^\lambda_{k}$
and observing that it extends continuously to $H_k$.
It is easy to check that $\theta_k$ is $*$-preserving and it follows from the strong continuity of $E^\lambda_{k}$ that
$\theta_k$ preserves norm bounded strongly convergent nets.
Further, by (3), restricted to $F_{k+1}(P)$,
the map $\theta_k$ is a unital $*$-homomorphism. Now Kaplansky density, the convergence preserving property of $\theta_k$  and the strong continuity of multiplication
on norm bounded subsets imply
that $\theta_k$ is a unital $*$-homomorphism. Since its domain
is a factor, $\theta_k$ is also injective. By Lemma \ref{normality},
$\theta_k$ is normal so that its image is
a von Neumann subalgebra of ${\mathcal L}(H_k)$.

That this image  contains
$M^\lambda_k = \theta_k(M^\lambda_k)$ and $\tilde{e}_{k+1} 
= \theta_k(e_{k+1})$ is clear from the formula that defines $\theta_k$.
Therefore (with $J_k : H_k \rightarrow H_k$ denoting the modular conjugation operator
for $M^\lambda_k$) $J_k.im(\theta_k).J_k$ contains $M^\rho_k$
and $\tilde{e}_{k+1}$. Direct calculation also shows that for $x \in M^\lambda_{k+1}$ and $y \in M^\lambda_k$, $J_k\theta_k(x^*)J_k(y\Omega) = \delta^2E^\lambda_k(e_{k+1}yx)$ and consequently that
$J_k.im(\theta_k).J_k
\subseteq \lambda_k(M^\lambda_{k-1})^\prime$. The opposite
inclusion is equivalent to the statement that $J_k.im(\theta_k)^\prime.J_k
\subseteq \lambda_k(M^\lambda_{k-1})$. But then, $J_k.im(\theta_k)^\prime.J_k
\subseteq M^\lambda_k \cap (\tilde{e}_{k+1})^\prime$ which is easily
verified to be $\lambda_k(M^\lambda_{k-1})$ since $\tilde{e}_{k+1}$ implements
the conditional expectation of $M^\lambda_k$ onto $M^\lambda_{k-1}$.
We conclude that $im(\theta_k)$ is $J_k.\lambda_k(M^\lambda_{k-1})^\prime .J_k$.

So the $II_1$-factor $M^\lambda_{k+1}$ is isomorphic (via $\theta_k$)
to the basic construction of $M^\lambda_{k-1} \subseteq M^\lambda_k$ with the Jones projection being identified with $e_{k+1} \in M^\lambda_{k+1}$.
It also follows that the index is finite and equals $E^\lambda_k(e_{k+1})^{-1} = \delta^2.$\\
(5) By (4) it suffices to see that $M^\lambda_0 \subseteq M^\lambda_1$ is extremal for which, according to Corollary 4.5 of \cite{PmsPpa}, it is enough  that the anti-isomorphism $x \mapsto Jx^*J$
from $(M^\lambda_0)^\prime \cap M^\lambda_1$ to $M^\lambda_2 \cap (M^\lambda_1)^\prime$ be trace preserving, for the trace $t^\lambda_1$ on the left and the trace $t^\lambda_2$ on the right.
Here, all algebras involved are identified with subalgebras of ${\mathcal L}(
H_1)$ via the appropriate isomorphisms - $\iota$ for
$M^\lambda_0$ and $\theta_1$ for $M^\lambda_2$.
By Lemma~\ref{moregen},  $(M^\lambda_0)^\prime \cap M^\lambda_1$ is identified with $P_1 \subseteq
F_1(P) \subseteq {\mathcal L}(H_1)$ and $M^\lambda_2 \cap (M^\lambda_1)^\prime$ with $P_{1,2} \subseteq P_2 \subseteq F_2(P)$.

Working with the definition of $\theta_1$ shows that if $x \in P_1$ and $z
\in P_{1,2}$ are as in Figure \ref{obvious} and $y \in F_1(P)$ is arbitrary,
then $\theta_1(z)(y\Omega) = yx\Omega = Jx^*J(y\Omega)$.
Chasing through the identifications, this establishes that
that the anti-isomorphism $P_1 \rightarrow P_{1,2}$
given by $x \mapsto Jx^*J$ is the obvious one in Figure~\ref{obvious}.
\begin{figure}[!htb]
\psfrag{x}{$x$}
\psfrag{z =}{$z=$}
\psfrag{A}{$\mapsto$}
\includegraphics[height=1.5cm]{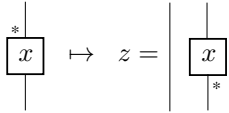}
\caption{The map from $P_1$ to $P_{1,2}$}
\label{obvious}
\end{figure}
Finally, noting that the traces restricted to $P_1$ and $P_{1,2}$ are both
$\tau$, sphericality of the subfactor planar algebra $P$ proves
the extremality desired.
\end{proof}

We now have all the pieces in place to prove our main theorem
which is exactly that
of \cite{GnnJnsShl}.

\begin{theorem}
Let $P$ be a subfactor planar algebra. The subfactor  $M^\lambda_0 \subseteq
M^\lambda_1$ constructed from $P$ is a
finite index and extremal subfactor with planar algebra isomorphic
to $P$.
\end{theorem}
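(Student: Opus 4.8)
The plan is to assemble the ingredients already proved and check that they yield exactly the three assertions of the theorem: finite index, extremality, and identification of the planar algebra. First I would invoke Proposition~\ref{omnibus}(4) with $k=1$ to conclude that $M^\lambda_0 \subseteq M^\lambda_1$ has finite index $\delta^2$, and Proposition~\ref{omnibus}(5) (again specializing $k=1$) for extremality; these two points are immediate and require no further work. The real content is to show that the subfactor planar algebra $P^{M^\lambda_0 \subseteq M^\lambda_1}$ produced by Jones' Theorem~\ref{jones} applied to the extremal subfactor $M^\lambda_0 \subseteq M^\lambda_1$ is isomorphic, as a planar algebra, to the original $P$.

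The strategy for the identification is to produce, for each $n$, a $*$-isomorphism $P_n \to (M^\lambda_0)' \cap M^\lambda_n$ and then check it is compatible with all the planar structure. The candidate map is supplied by Proposition~\ref{relcomm} (together with Lemma~\ref{moregen}): it identifies $(M^\lambda_0)' \cap M^\lambda_n$ with $P_n \subseteq F_n(P) \subseteq M^\lambda_n$. Since Theorem~\ref{jones} characterizes $P^{M^\lambda_0 \subseteq M^\lambda_1}$ by the four conditions (0)--(3), it suffices to verify those four conditions for this identification. Condition (0) says the identification respects the $*$-algebra structure and the tower inclusions; the $*$-structure matches because $\dagger|_{P_k} = *$ (Proposition~\ref{filtalg}(2) and the observation following it), and the inclusions match because the inclusion $F_{k}(P) \hookrightarrow F_{k+1}(P)$ restricts on $P_k$ to $Z_{I_k^{k+1}}$, which is precisely the planar algebra inclusion tangle. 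Condition (1) is that $Z_{E^{n+1}}(1) = \delta e_{n+1}$, and this is the definition of the element $e_{n+1} \in P_{n+1}$ used throughout Section~6 (cf.\ Proposition~\ref{omnibus}(2)); one just has to check the normalization. Conditions (2) and (3) identify the left and right expectation tangles with the trace-preserving conditional expectations: condition (3) follows from the fact (item (e) before Proposition~\ref{filtalg}) that $\delta E_{k-1}|_{P_k} = Z_{ER^{k-1}_k}$ together with Proposition~\ref{omnibus}(1) identifying $E^\lambda_{k-1}$ with $E_{k-1}$, and condition (2) is the analogous statement for the left expectation $EL(1)^{n+1}_{n+1}$, whose image $P_{1,n+1}$ was identified in Lemma~\ref{moregen} with $(M^\lambda_1)' \cap M^\lambda_{n+1}$, so that $Z_{EL(1)^{n+1}_{n+1}}$ restricted to $P_{n+1}$ lands in $M^\lambda_1{}' \cap M^\lambda_{n+1} = M^\lambda_1 \cap M^\lambda_{n+1}$ -- wait, rather one checks directly from the pictorial formula for $E_n^\lambda$ and the rotation/left-expectation tangles that the planar left expectation agrees with the von Neumann conditional expectation onto the relative commutant of $M^\lambda_1$.

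The main obstacle I expect is a bookkeeping one rather than a conceptual one: matching the \emph{tangle action} $Z_T$ of the reconstructed planar algebra against the composite of the structure maps of $F_k(P)$. Jones' theorem guarantees uniqueness of a subfactor planar algebra satisfying (0)--(3), so in principle one only needs those four conditions; but one must be careful that the planar algebra structure transported to $\bigoplus_n (M^\lambda_0)' \cap M^\lambda_n$ via the von Neumann tower really is the one of Theorem~\ref{jones}, and that the identification of each $P_n$ is simultaneously compatible with multiplication (which on $P_k \subseteq F_k(P)$ is $Z_{M^k_{k,k}}$, item (a)), trace ($t_k|_{P_k}=\tau$, item (c)), and the conditional expectations and Jones projections appearing in (1)--(3). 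The conventions footnote near Theorem~\ref{jones} warns that the multiplication and rotation tangles here are adjoint to those of \cite{Jns}, so a small amount of care with orientations is needed, but no genuinely hard estimate or new construction is required; everything has been set up in Sections~3--6 precisely so that this final verification reduces to the items (a)--(e) preceding Proposition~\ref{filtalg} and to Propositions~\ref{relcomm} and~\ref{omnibus}.
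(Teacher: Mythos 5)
Your proposal is correct and follows essentially the same route as the paper: finite index and extremality from Proposition~\ref{omnibus}, identification of the relative commutant towers with $P_k$ and $P_{1,k}$ via Proposition~\ref{relcomm} and Lemma~\ref{moregen}, matching of the inclusions, traces, Jones projections and conditional expectations with their planar counterparts, and then an appeal to (the uniqueness in) Jones' theorem. Your self-corrected worry about condition (2) is settled exactly as the paper settles it: since $t^\lambda_{n+1}$ restricts to $\tau$ on $P_{n+1}$ and $P_{1,n+1}$ is identified with $(M^\lambda_1)'\cap M^\lambda_{n+1}$, uniqueness of the trace-preserving conditional expectation forces $\delta^{-1}Z_{EL(1)^{n+1}_{n+1}}$ to agree with $E_{(M^\lambda_1)'\cap M^\lambda_{n+1}}$, so no separate pictorial computation is needed beyond Remark~\ref{plalg}.
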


\begin{proof}
By Proposition \ref{omnibus} and the remarks preceding it, the
subfactor  $M^\lambda_0 \subseteq
M^\lambda_1$ constructed from $P$ is extremal, of index $\delta^2$ (where $\delta$ is the modulus of $P$) and has 
basic construction tower given by $M^\lambda_0 \subseteq M^\lambda_1 \subseteq M^\lambda_2 \subseteq \cdots$.

Further, the towers
of relative commutants $(M^\lambda_1)^\prime \cap M^\lambda_k 
\subseteq (M^\lambda_0)^\prime \cap M^\lambda_k$
are identified, by Proposition \ref{relcomm}, with those of the $P_{1,k} \subseteq P_k$
with inclusions (as $k$ increases) given by the inclusion tangle, by the remarks preceding
Proposition \ref{filtalg}.

Under this identification, the Jones projections $e_{k+1} \in P_{k+1}$
are the Jones projections in the basic construction tower
$M^\lambda_0 \subseteq M^\lambda_1 \subseteq M^\lambda_2 \subseteq \cdots$.
Also the trace $t^\lambda_k$ restricted to $P_k$ is just its usual
trace $\tau$. It follows that the trace preserving conditional
expectation maps $(M^\lambda_0)^\prime \cap M^\lambda_k \rightarrow (M^\lambda_0)^\prime \cap M^\lambda_{k-1}$
and $(M^\lambda_0)^\prime \cap M^\lambda_k \rightarrow (M^\lambda_1)^\prime \cap M^\lambda_{k}$ agree with those of
the planar algebra $P$.

An appeal now to Jones' theorem (Theorem \ref{jones}) completes
the proof.
\end{proof}

\section{Agreement with the Guionnet-Jones-Shlyakhtenko model}

In this short section, we construct (without proofs) explicit trace preserving $*$-isomorphisms from the
graded algebras $Gr_k(P)$ of \cite{GnnJnsShl} to the filtered
algebras $F_k(P)$ and their inverse isomorphisms. Since we have not unravelled the inclusions
of $Gr_k(P)$ from \cite{GnnJnsShl}, we are not able to claim
the towers are isomorphic, but suspect that this is indeed the case.

For $k \geq 0$, the algebra $Gr_k(P)$ is defined to be the
graded algebra with underlying vector space given by
$\oplus_{n=k}^\infty P_n$ and multiplication defined as follows. For
$a \in P_m \subseteq Gr_k(P)$ and $b \in P_n \subseteq Gr_k(P)$, define
$a \bullet b \in P_{m+n-k} \subseteq Gr_k(P)$ by
$a \bullet b = (a \# b)_{m+n-k}$ - the highest (i.e., $t = m+n-k$) degree
component of $a \# b$ (and observe therefore that $Gr_k(P)$
is the associated graded algebra of the filtered algebra $F_k(P)$).

There is a $*$-algebra
structure on $Gr_k(P)$ defined exactly as in $F_k(P)$ which
also we will denote by $\dagger$.
There is also a trace $Tr_k$ defined on $Gr_k(P)$ as follows.
For $a \in P_m \subseteq Gr_k(P)$, define $Tr_k(a)$ by the tangle in Figure
\ref{trdef}
\begin{figure}[!htb]
\psfrag{a}{$a$}
\psfrag{m-k}{\tiny $m\!-\!k$}
\psfrag{T_{m-k}}{$T_{m-k}$}
\psfrag{k}{\tiny $k$}
\includegraphics[height=2.5cm]{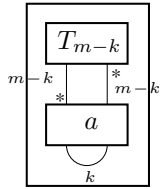}
\caption{Definition of $Tr_k(a)$.}
\label{trdef}
\end{figure}
\noindent
where $T_m \in P_m$ is defined to be the sum of all the Temperley-Lieb elements of $P_m$.

Define maps $\phi(k): Gr_k(P) \rightarrow F_k(P)$ and
$\psi(k): F_k(P) \rightarrow Gr_k(P)$ as follows.
For $i,j \geq k$, denoting the $P_i$ component of $\phi(k)|_{P_j}$ (resp. $\psi(k)|_{P_j}$) by
$\phi(k)^i_j$ (resp. $\psi(k)^i_j$), define $\phi(k)^i_j$ to be the sum of the maps given by all the
 `$k$-good $(j,i)$-annular tangles'  and $\psi(k)^i_j$ to be
$(-1)^{i+j}$ times the sum of the maps given by all the `$k$-excellent $(j,i)$-annular tangles', where,
for $k \leq i \leq j$, a  $(j,i)$-annular tangle is said to be $k$-good if (i) it has $2i$ through
strands, (ii) the $*$-regions of its internal and external boxes coincide
and (iii) the last $2k$ points
of the internal box are connected to the last $2k$ points of the external
box, and $k$-excellent if further, (iv) there is no nesting
among the strands connecting points on its internal box. 
Note that the matrices of $\phi(k)$ and $\psi(k)$, regarded
as maps from $\oplus_{n=k}^\infty P_n$ to itself, are block
upper triangular with diagonal blocks being identity matrices,
and consequently clearly invertible.

We then have the following result.

\begin{proposition}
For each $k \geq 0$, the maps $\phi(k)$ and $\psi(k)$ are mutually inverse
$*$-isomorphisms which take the traces $Tr_k$ and $\delta^k t_k$
to each other.
\end{proposition}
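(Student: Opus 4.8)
The plan is to verify three separate assertions: that $\phi(k)$ and $\psi(k)$ are mutually inverse linear isomorphisms, that they are $*$-algebra homomorphisms, and that they intertwine $Tr_k$ with $\delta^k t_k$. For the first assertion I would argue as the paragraph preceding the proposition already hints: order the summands $P_n$ by $n$ and observe that in the block-matrix form both $\phi(k)$ and $\psi(k)$ are block upper-triangular with identity diagonal blocks (the only through-strand-preserving $k$-good or $k$-excellent $(j,j)$-annular tangle is the identity). Hence both are invertible. To see they are genuinely inverse to one another, I would compute the composite $\psi(k)\circ\phi(k)$ by expanding it as a sum, over all pairs consisting of a $k$-good $(j,\ell)$-tangle followed by a $k$-excellent $(\ell,i)$-tangle, of the composite annular tangles weighted by $(-1)^{i+\ell}$ and the appropriate powers of $\delta$ arising from closed loops. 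The claim is that for $i<j$ everything cancels in pairs and for $i=j$ only the identity survives; this is a sign-reversing-involution/inclusion-exclusion argument on the combinatorics of how cups can be nested or unnested, very much in the spirit of the telescoping cancellations already used in the proof of Corollary~\ref{xnxm}.

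For the homomorphism property, the key point is that $Gr_k(P)$ is the associated graded of $F_k(P)$, so the $\bullet$-multiplication is the top-degree part of $\#$. I would show $\phi(k)(a\bullet b)=\phi(k)(a)\#\phi(k)(b)$ by a direct pictorial computation: gluing a $k$-good annular tangle around $a$ and another around $b$ and then multiplying in $F_k(P)$ via Figure~\ref{sharpdef} produces, after isotopy, a sum of diagrams each of which is again a $k$-good annular tangle applied to $(a\#b)_{m+n-k}=a\bullet b$, together with lower-order terms that are themselves $k$-good-annular images of $a\bullet b$; matching these against the definition of $\phi(k)$ gives the result. The compatibility with $\dagger$ is immediate since the $\dagger$ on $Gr_k(P)$ is defined by the very same tangle as on $F_k(P)$ and $k$-good annular tangles are closed under the reflection defining adjoints. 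Once $\phi(k)$ is shown to be a $*$-homomorphism and a bijection, $\psi(k)=\phi(k)^{-1}$ is automatically one too.

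For the trace identity, I would show $\delta^k t_k(\phi(k)(a)) = Tr_k(a)$ for $a\in P_m\subseteq Gr_k(P)$. Unwinding the left side: $t_k$ picks out the $P_k$-component, so only those $k$-good $(m,k)$-annular tangles contribute, i.e. those with $2k$ through strands, $*$-regions matching, last $2k$ points joined, and the remaining $2(m-k)$ points of the $m$-box capped off in some non-crossing-or-crossing planar way. Closing up with the trace tangle $TR^0_k$ and absorbing the normalising $\delta^{-k}$ against the $\delta^k$ prefactor, one finds that summing the resulting closed diagrams over all admissible cappings of the $m$-box reproduces exactly $Z$ of the tangle in Figure~\ref{trdef}, because the sum of all such cappings is precisely the Temperley--Lieb element $T_{m-k}$ inserted as in that figure. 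The main obstacle I anticipate is the first assertion — proving $\psi(k)\circ\phi(k)=\mathrm{id}$ rather than merely ``block-triangular with identity diagonal'', since that genuinely requires setting up the right sign-reversing cancellation among composites of good and excellent annular tangles and carefully tracking the $\delta$-powers from closed loops via the composition formula for annular tangles; the homomorphism and trace statements, by contrast, are routine (if lengthy) diagrammatic manipulations, which is presumably why the authors state the proposition ``without proofs''.
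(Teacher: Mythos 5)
The paper itself records no proof of this proposition (the whole final section is explicitly ``without proofs''); the only argument on record is the block-upper-triangularity remark preceding the statement, which, as you correctly note, gives invertibility of $\phi(k)$ and $\psi(k)$ but not that they invert \emph{each other}. So the comparison can only be with what your outline would have to deliver, and in substance it is the right outline: the three-part division is the natural one, and your trace computation is correct as sketched. For $a\in P_m$ the $P_k$-component of $\phi(k)(a)$ is the sum over $k$-good $(m,k)$-annular tangles, i.e.\ over the planar cappings of the first $2(m-k)$ points of $a$ allowed by the $*$-region condition; after closing with $TR^0_k$ and cancelling $\delta^{\pm k}$ these closed diagrams are in bijection with the $C_{m-k}$ Temperley--Lieb insertions of Figure \ref{trdef}, giving $\delta^k t_k(\phi(k)(a))=Tr_k(a)$, and the companion identity for $\psi(k)$ follows once mutual inverseness is known, just as bijectivity plus the homomorphism property for $\phi(k)$ hands the same property to $\psi(k)$.

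Two cautions on the part you rightly single out as the real work. First, in the composites $\psi(k)\circ\phi(k)$ and $\phi(k)\circ\psi(k)$ no closed loops can form, since every boundary point of the external box of a $k$-good tangle lies on a through strand; so there are no powers of $\delta$ to track, and the identity to prove is a purely signed count of factorizations of each composite annular tangle into (excellent)$\circ$(good). Note that this is genuinely a statement about factorizations with multiplicity, not a term-by-term pairing: already for $k=0$ and $x\in P_2$ the capping $\{1,2\},\{3,4\}$ arises in the $P_0$-component of $\psi(0)(\phi(0)(x))$ with coefficient $+1-1-1+1$, so your sign-reversing involution must act on the set of factorizations, as you in fact propose. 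Second, ``no nesting'' must be read in the annular sense -- a cap that passes around the far side of the external box and thereby encloses other caps counts as nested -- and the $*$-region coincidence condition is what limits which cappings occur in $\phi(k)$ in the first place; with any laxer reading, $\psi(k)$ is simply not the inverse, so these conventions need to be fixed explicitly before the cancellation can be set up. Finally, the $\dagger$-compatibility is slightly more than ``immediate'': $\dagger$ on the $P_m$-component is $*$ composed with $k$ rotations, so one must check that the classes of $k$-good and $k$-excellent tangles are stable under adjoint-followed-by-rotation; this is routine but is a step, not a tautology. With these points attended to, your plan would constitute a complete proof of the proposition the paper leaves unproved.
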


\bibliographystyle{amsalpha}

\end{document}